\renewcommand{\bar}[1]{\overline{#1}}
\renewcommand{\tilde}[1]{\widetilde{#1}}
\newcommand{\sC}{\mathcal{C}}
\newcommand{\G}{\mathcal{G}}
\newcommand{\A}{\mathcal{A}}
\newcommand{\B}{\mathcal{B}}
\newcommand{\M}{\mathcal{M}}
\newcommand{\Z}{\mathcal{Z}}
\newcommand{\K}{\mathcal{K}}
\newcommand{\F}{\mathcal{F}}
\newcommand{\sO}{\mathcal{O}}
\newcommand{\sL}{\mathcal{L}}
\newcommand{\E}{\mathcal{E}}
\newcommand{\g}{\mathfrak{g}}
\newcommand{\C}{\mathbb{C}}
\newcommand{\R}{\mathbb{R}}
\newcommand{\bH}{\mathbf{H}}
\newcommand{\bE}{\mathbb{E}}
\newcommand{\bL}{\mathbb{L}}
\newcommand{\m}{\mathbf{m}}
\newcommand{\n}{\mathbf{n}}
\newtheorem{proposition}{Proposition}[section]
\newtheorem{lemma}[proposition]{Lemma}
\newtheorem{theorem}[proposition]{Theorem}
\newtheorem{corollary}[proposition]{Corollary}
\newtheorem{theoremintro}{Theorem}
\newtheorem*{theoremintro*}{Theorem}
\newtheorem*{corollaryintro*}{Corollary}
\numberwithin{equation}{section}
\DeclareMathOperator{\tr}{tr}
\DeclareMathOperator{\Aut}{Aut}
\DeclareMathOperator{\Lie}{Lie}
\DeclareMathOperator{\End}{End}
\DeclareMathOperator{\codim}{codim}
\title[The Quasi-projectivity of the Moduli Space of Higgs Bundles]{An Analytic Approach to the Quasi-projectivity of the Moduli Space of
  Higgs Bundles}
\author{Yue Fan}
\date{\today}
\address{Yue Fan, Department of Mathematics, University of Maryland,
  College Park, MD, 20742, USA}
\email{\texttt{yuefan@umd.edu}}
\begin{document}
\maketitle
\begin{abstract}
  The moduli space of Higgs bundles can be defined as a quotient of an
  infinite-dimensional space. Moreover, by the Kuranishi slice method, it is
  equipped with the structure of a normal complex space. In this paper, we will
  use analytic methods to show that the moduli space is quasi-projective. In
  fact, following Hausel's method, we will use the symplectic cut to construct
  a normal and projective compactification of the moduli space, and hence prove
  the quasi-projectivity. The main difference between this paper and Hausel's is
  that the smoothness of the moduli space is not assumed.
\end{abstract}
\tableofcontents

\section{Introduction}
Let $M$ be a closed Riemann surface with genus $\geq2$. A Higgs bundle
$(\E,\Phi)$ is a pair of a holomorphic vector bundle $\E\to M$ and a holomorphic
1-form taking values in $\End\E$. In \cite{Fan2020}, the moduli space $\M$ of
Higgs bundles is constructed as a quotient space of an infinite-dimensional
space. By the Kuranishi slice method, it is shown that $\M$ can be endowed with
the structure of a normal complex space. Moreover, it is shown that $\M$ is
canonically biholomorphic to the moduli space in the category of schemes, the
one constructed by Simpson and Nitsure using Geometric Invariant Theory (GIT)
(see \cite{Simpson1994a} and \cite{Nitsure1991}). As a consequence, $\M$ is a
quasi-projective variety. It is natural to ask whether the quasi-projectivity of
$\M$ can be proved directly by analytic methods without using the biholomorphism
just mentioned. The purpose of this paper is to give a positive answer.
Therefore, the first step toward this goal is to compactify the moduli space
$\M$. In \cite{Hausel1998}, Hausel used the symplectic cut to compactify the
moduli space $\M$ when it is smooth and the underlying smooth bundle of
$(\E,\Phi)$ is of rank 2. He further showed that the compactification is
projective and thus the quasi-projectivity of $\M$ follows. In this paper, we
will follow the same method to compactify the moduli space $\M$ and prove the
projectivity of the compactification. However, we will not impose the smoothness
conditions as Hausel did. Therefore, this paper is a generalization of Hausel's
results. It should be noted that Simpson compactified the moduli space using
algebro-geometric methods in \cite{Simpson1997}. However, the projectivity of
the compactification was not proved. In a recent paper \cite{Cataldo2020}, de
Cataldo followed Simpson's method and constructed a projective compactification
of the moduli space. It can be shown that our compactification is isomorphic to
de Cataldo's compactification.

To state the results and set up the notations, we fix a smooth Hermitian vector
bundle $E\to M$. For convenience, we assume that $E$ is of degree 0. This
condition is not essential. Since $\dim_\C M=1$, the space $\A$ of unitary
connections on $E$ parametrizes holomorphic structures on $E$. Let $\g_E\to M$
be the bundle of the skew-Hermitian endomorphisms of $E$, and set
$\sC=\A\times\Omega^{1,0}(\g_E^\C)$. The configuration space of Higgs bundles
(with the fixed underlying bundle $E$) is defined as
\begin{equation}
  \B=\{(A,\Phi)\in\sC\colon\bar{\partial}_A\Phi=0\}.
\end{equation}
The complex gauge group $\G^\C=\Aut(E)$ acts on $\B$ by 
\begin{equation}
  (\bar{\partial}_A,\Phi)\cdot g=(g^{-1}\circ\bar{\partial}_A\circ g,g^{-1}\Phi g),\qquad g\in\G^\C, (A,\Phi)\in\B.
\end{equation}
To define the moduli space $\M$, let us recall various stability conditions. A
Higgs bundle $(\E,\Phi)$ is semistable if $\mu(\F)\leq\mu(\E)$ for every
$\Phi$-invariant holomorphic subbundle $\F$ with $0\subsetneq\F\subsetneq\E$,
where $\mu(\F)$ means the slope of $\F$. If the equality $\mu(\F)=\mu(\E)$
cannot occur, then $(\E,\Phi)$ is stable. Finally, $(\E,\Phi)$ is polystable if
it is a direct sum of stable Higgs bundles of the same slope. Consequently,
there are $\G^\C$-invariant subspaces $\B^{ss}$, $\B^s$ and $\B^{ps}$ of $\B$
consisting of semistable, stable and polystable Higgs bundles, respectively.
Moreover, the polystability has a gauge-theoretic interpretation as follows.
Recall that $\sC$ is an infinite-dimensional affine hyperK\"ahler manifold that
is modeled on $\Omega^1(\g_E)\oplus\Omega^{1,0}(\g_E^\C)$ (see
\cite[\S6]{Hitchin1987b}). If we identify $\Omega^1(\g_E)$ with
$\Omega^{0,1}(\g_E^\C)$, an $L^2$-metric on $\sC$ is given by
\begin{equation}
  g(a,\eta;a,\eta)=\frac{2\sqrt{-1}}{4\pi^2}\int_M\tr(a^*a+\eta\eta^*),\qquad(a,\eta)\in\Omega^{0,1}(\g_E^\C)\oplus\Omega^{1,0}(\g_E^\C).
\end{equation}
Let $I$ be the complex structure given by the multiplication by $\sqrt{-1}$,
$\Omega_I$ its associated K\"ahler form, and $\G$ the subgroup of $\G^\C$
consisting of unitary gauge transformations. The $\G$-action on $\sC$ is
Hamiltonian with respect to the K\"ahler form $\Omega_I$, and the moment map is
given by
\begin{equation}
  \mu(A,\Phi)=\frac{1}{4\pi^2}(F_A+[\Phi,\Phi^*])\colon\sC\to\Omega^2(\g_E).
\end{equation}
Then, the Hitchin-Kobayashi correspondence states that a Higgs bundle
$(A,\Phi)\in\B$ is polystable if and only if $(A,\Phi)\cdot g$ satisfies
Hitchin's equation $\mu=0$ for some $g\in\G^C$. A stronger version of this
result states that the inclusion $\mu^{-1}(0)\cap\B\hookrightarrow\B^{ps}$
induces a homeomorphism $(\mu^{-1}(0)\cap\B)/\G\xrightarrow{\sim}\B^{ps}/\G^\C$,
where the inverse is induced by the retraction
$r\colon\B^{ss}\to\mu^{-1}(0)\cap\B$ defined by the Yang-Mills-Higgs flow (for
more details, see \cite{Wentworth2016} and \cite{Wilkin2008}).

By definition, the moduli space $\M$ is defined as the quotient $\B^{ps}/\G^\C$
equipped with the $C^\infty$-topology. In \cite{Fan2020}, it is shown that $\M$
is a normal complex space. One of the main results in this paper is that $\M$
admits a compactification.
\begin{theoremintro}\label{sec:introduction-compactify}
  There is a normal compact complex space $\bar{\M}$ in which the moduli space
  $\M$ embeds as an open dense subset. Moreover, the complement
  $Z=\bar{\M}\setminus\M$ is a closed complex subspace of pure codimension 1.
\end{theoremintro}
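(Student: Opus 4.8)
The plan is to construct $\bar{\M}$ by the symplectic cut of Lerman, following Hausel, but carried out in the category of normal complex spaces rather than smooth manifolds. The starting data is the circle action $e^{\sqrt{-1}\theta}\cdot(A,\Phi)=(A,e^{\sqrt{-1}\theta}\Phi)$, which commutes with $\G^\C$ and hence descends to a holomorphic $\C^\times$-action on $\M$, together with the function $f(A,\Phi)=\tfrac{1}{c}\|\Phi\|_{L^2}^2$ for a suitable constant $c$. First I would check that $f$ is an honest moment map for the circle action relative to the K\"ahler structure that $\M$ inherits from the reduction $(\mu^{-1}(0)\cap\B)/\G$ and its Kuranishi slice models, that $f\geq0$ with $f^{-1}(0)$ the locus $\{\Phi=0\}$ of polystable bundles, and---crucially---that $f$ is proper. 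Properness is the sole compactness input, and I expect it to follow from the a priori estimates for solutions of Hitchin's equations (equivalently, from the properness of the Hitchin map together with the compactness of its fibres): a bound on $\|\Phi\|_{L^2}$ confines the Hitchin image to a bounded, hence relatively compact, subset of the base.

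Next I would record that the fixed-point locus $\M^{S^1}$ consists of the complex variations of Hodge structure; these are nilpotent, so $\M^{S^1}$ is contained in the nilpotent cone and is therefore a compact closed complex subspace with finitely many components, on each of which $f$ is constant. Fixing $\epsilon$ strictly above all of these finitely many critical values makes $S^1$ act locally freely on $f^{-1}(\epsilon)$, which then avoids $\M^{S^1}$ entirely. I would then define
\begin{equation}
  \bar{\M}=(\M\times\C)/\!/_{\epsilon}\C^\times,\qquad t\cdot(m,z)=(t\cdot m,\,tz),
\end{equation}
the analytic Hilbert (K\"ahler) quotient at level $\epsilon$ for the circle moment map $\hat{f}(m,z)=f(m)+\tfrac12|z|^2$. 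As a set this is $\{f<\epsilon\}\sqcup f^{-1}(\epsilon)/S^1$. Since $\hat f(t\cdot m,tz)=|t|^2\hat f(m,z)$ ranges over $(0,\infty)$ along each orbit with $z\neq0$, every such orbit meets the level set once modulo $S^1$; hence the open stratum $\{z\neq0\}/\C^\times$ is biholomorphic both to $\{f<\epsilon\}$ and, via the downward $\C^\times$-flow $t\to0$, to all of $\M$, which is how $\M$ embeds as an open dense subset. Compactness of $\bar{\M}$ is then immediate, because $\hat f^{-1}(\epsilon)=\{f+\tfrac12|z|^2=\epsilon\}$ is compact by properness of $f$.

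The heart of the argument---and where the absence of smoothness really bites---is equipping $\bar{\M}$ with the structure of a \emph{normal} complex space and identifying the boundary. Locally over a Kuranishi slice $S$ for $\M$, the construction reads $\bar{\M}\cong(S\times\C)/\!/\C^\times$, which is normal because an analytic Hilbert quotient of a normal complex space by a reductive group is again normal; the transition maps are biholomorphisms induced by the $\C^\times$-equivariant Kuranishi atlas of $\M$, and the glued space is Hausdorff and second countable by properness of $f$. I expect the main technical obstacle to be precisely this step: transporting Lerman's cut and the Kempf--Ness/K\"ahler-quotient correspondence verbatim to singular K\"ahler spaces, verifying the completeness hypotheses of the analytic quotient theory in the singular setting, and checking that local freeness of $S^1$ on the singular level set $f^{-1}(\epsilon)$ still controls the reduced space. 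Granting this, the complement $Z=\bar{\M}\setminus\M=f^{-1}(\epsilon)/S^1$ is, in each local model, the $z=0$ locus of $(S\times\C)/\!/\C^\times$; as $z$ is a single $\C^\times$-semi-invariant and a nonzerodivisor on the normal space $S\times\C$, this locus is a closed complex subspace which is locally Cartier, so $Z\cong(\M\setminus\M^{S^1})/\C^\times$ is a closed complex subspace with $\codim Z=1$ everywhere, i.e.\ of pure codimension $1$, as claimed.
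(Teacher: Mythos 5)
Your overall strategy --- cutting $\M\times\C$ along a level set of the shifted moment map and recovering $\M$ as $(\M\times\C^*)/\C^*$ --- is the paper's (and Hausel's). But the step you yourself flag as ``the main technical obstacle'' is precisely where your argument stops short, and it is the heart of the matter. You propose to realize $\bar{\M}$ as an analytic Hilbert/K\"ahler quotient of $\M\times\C$ at level $\epsilon$ by transporting the Kempf--Ness correspondence to singular K\"ahler spaces; but the K\"ahler metric on $\M$ is only known to be \emph{weak} (continuous, strictly plurisubharmonic only stratum-wise), so the Heinzner--Loose analytic GIT does not apply, and ``verifying the completeness hypotheses in the singular setting'' is exactly what cannot be done directly. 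The paper's substitute is the actual content of the proof: the $\C^*$-action on $W=(\M\times\C)\setminus(h^{-1}(0)\times\{0\})$ is \emph{proper} (Lemma~\ref{sec:symplectic-cuts-properness-on-M}, Corollary~\ref{sec:symplectic-cuts-properness}, deduced from the properness of the Hitchin fibration), so $W/\C^*$ exists as a normal geometric analytic Hilbert quotient by the general theory of proper actions, with no K\"ahler input; one then proves $W=(\M\times\C)^{ss}=\C^*\tilde{f}^{-1}(c)$ and identifies the cut with $W/\C^*$. Without this (or an equivalent) your $\bar{\M}$ carries no complex structure.

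Two further steps fail as written. (i) Your claim that every orbit with $z\neq0$ meets the level set rests on $\hat{f}(t\cdot m,tz)=|t|^2\hat{f}(m,z)$. This scaling identity holds for $\|\Phi\|_{L^2}^2$ upstairs on $\sC$, but not on $\M$: there $f([A,\Phi]_S)$ is evaluated at the harmonic representative solving Hitchin's equation, and the harmonic representative of $(A,t\Phi)$ is not obtained by scaling that of $(A,\Phi)$. The paper replaces this with an intermediate-value argument on $q(t)=f([A,t\Phi])-\tfrac12t^2\|z\|^2-c$, using the properness of $h$ and $f$ to control $t\to0$ and $t\to\infty$ (Lemma~\ref{sec:symplectic-cuts-W=ss}). (ii) The boundary is not $(\M\setminus\M^{S^1})/\C^\times$. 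The unstable locus at $z=0$ is the entire nilpotent cone $h^{-1}(0)$, not just the fixed locus: for $m\in h^{-1}(0)$ the orbit $t\cdot m$ stays in the compact set $h^{-1}(0)$, on which $f$ is bounded away from the cut level, so its closure never reaches the level set, whereas $(\M\setminus\M^{S^1})/\C^\times$ is not even Hausdorff (orbits of non-fixed nilpotent Higgs bundles limit to fixed points). Correspondingly, the level must be chosen to clear $f$ on all of the compact nilpotent cone, not merely its critical values on $\M^{S^1}$. The correct boundary is $Z=((\M\setminus h^{-1}(0))\times\{0\})/\C^*$, and its pure codimension $1$ follows from a dimension count for the proper $\C^*$-action via Remmert's rank theorem rather than from descending the divisor $\{z=0\}$, since $z$ is only a semi-invariant and does not define a function on the quotient.
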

As a consequence, the quasi-projectivity of $\M$ follows if we can show that the
compactification $\bar{\M}$ is projective. Therefore, we need to construct an
ample line bundle on $\bar{\M}$. To construct such a line bundle, we need a
descent lemma for vector bundles. In \cite{Drezet1989}, Drezet and Narasimhan
proved a descent lemma for good quotients of algebraic varieties. A natural
analogue of good quotients in our settings is the quotient map
$\pi\colon\B^{ss}\to\B^{ss}\sslash\G^\C$, where $\B^{ss}\sslash\G^\C$ is the
quotient space of $\B^{ss}$ by the $S$-equivalence relation of Higgs bundles.
Recall that two Higgs bundles are $S$-equivalent if the graded objects of their
Seshadri filtrations are isomorphic. Heuristically, we think of
$\pi\colon\B^{ss}\to\B^{ss}\sslash\G^\C$ as an infinite-dimensional GIT quotient
and naturally expect that its properties are similar to those of good quotients
of algebraic varieties. To justify this heuristic thinking, we will first prove
in Section~\ref{sec:s-equiv-class} that the inclusion
$\B^{ps}\hookrightarrow\B^{ss}$ induces a homeomorphism
$\M\to\B^{ss}\sslash\G^\C$, and hence will routinely identify $\M$ with
$\B^{ss}\sslash\G^\C$. Then, we will show the following.
\begin{theoremintro}\label{sec:introduction-infi-dim-GIT}
  The quotient map $\pi\colon\B^{ss}\to\M$ satisfies the
  following properties:
  \begin{enumerate}
  \item $\pi$ identifies $\G^\C$-orbits whose closures in $\B^{ss}$ intersect.
    
  \item Every fiber of $\pi$ contains a unique $\G^\C$-orbit that is closed in
    $\B^{ss}$. Moreover, a $\G^\C$-orbit is closed in $\B^{ss}$ if and only if
    it contains a polystable Higgs bundle.
    
  \item $\sO_\M=\pi_*\sO_{\B^{ss}}^{\G^\C}$. In other words, if $U$ is an open
    subset of $\M$, the map $\sO_\M(U)\to\sO_{\B^{ss}}(\pi^{-1}(U))^{\G^\C}$
    given by $f\mapsto \pi^*f$ is a bijection.
    
  \item $\pi$ is a categorical quotient in the sense that every
    $\G^\C$-invariant holomorphic map from $\B^{ss}$ into a complex space
    factors through the quotient map $\pi\colon\B^{ss}\to\B^{ss}\sslash\G^\C$.
  \end{enumerate}
\end{theoremintro}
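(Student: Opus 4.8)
The plan is to treat $\pi\colon\B^{ss}\to\M$ as an infinite-dimensional K\"ahler quotient and to reduce each of the four assertions either to the continuity of $\pi$ together with the Hausdorff property of the normal complex space $\M$, or to a finite-dimensional statement supplied by the Kuranishi slice. Throughout I would use the identifications $\M\cong\B^{ss}\sslash\G^\C\cong(\mu^{-1}(0)\cap\B)/\G$ from Section~\ref{sec:s-equiv-class} and from the Hitchin--Kobayashi correspondence, together with the continuous $\G^\C$-invariant retraction $r\colon\B^{ss}\to\mu^{-1}(0)\cap\B$ defined by the Yang--Mills--Higgs flow; note that $\pi$ is continuous, surjective, and $\G^\C$-invariant, that semistability is a closed condition so limits of sequences in $\B^{ss}$ remain in $\B^{ss}$, and that $\M$ carries the quotient topology. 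Assertion (1) is then the softest: since $\pi$ is continuous and $\G^\C$-invariant and $\M$ is Hausdorff, $\pi$ is constant on each orbit and hence, by continuity, on each orbit closure, as $\pi(\overline{\G^\C\cdot x})\subseteq\overline{\{\pi(x)\}}=\{\pi(x)\}$. Two orbits with intersecting closures therefore have the same image.

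For (2) I would first prove the equivalence ``closed $\Leftrightarrow$ polystable.'' For arbitrary $x=(\E,\Phi)$, the flow limit $r(x)$ lies in $\mu^{-1}(0)\cap\B$, is polystable by Hitchin--Kobayashi, and lies in $\overline{\G^\C\cdot x}$ because the flow evolves by complex gauge transformations; if the orbit is closed then $r(x)\in\G^\C\cdot x$, so $x$ is isomorphic to a polystable Higgs bundle and is itself polystable. Conversely, let $x$ be polystable and, after a complex gauge transformation, assume $x\in\mu^{-1}(0)$. To show $\G^\C\cdot x$ is closed I would invoke the Kempf--Ness picture: the associated Donaldson (Kempf--Ness) functional is geodesically convex along the one-parameter families $e^{ts}$, $s$ Hermitian, in $\G^\C/\G$, and the condition $x\in\mu^{-1}(0)$ makes the identity a critical, hence minimizing, point; coercivity of this functional for polystable $x$ keeps any convergent sequence $x\cdot g_n$ within bounded distance of the orbit, so its limit again lies in $\G^\C\cdot x$. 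The same convexity shows that $\mu^{-1}(0)$ meets a polystable orbit in a single $\G$-orbit. Uniqueness follows: each fiber $\pi^{-1}(m)$ is an $S$-equivalence class, which contains exactly one isomorphism class of polystable Higgs bundle, namely the graded object of the Seshadri filtration; its orbit is the unique closed orbit, and it lies in $\overline{\G^\C\cdot x}$ for every $x$ in the fiber because $r(x)$ realizes it.

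For (3) and (4) I would localize with the Kuranishi slice. Near a polystable point $x_0\in\mu^{-1}(0)$, the slice method of \cite{Fan2020} presents a $\G^\C$-invariant neighborhood of the orbit so that $\pi$ is modeled on the finite-dimensional good quotient $S^{ss}\to S^{ss}\sslash H$ of a slice $S$, a neighborhood of $0$ in the Higgs deformation space, by the reductive group $H=\Aut(x_0)$. Injectivity of $f\mapsto\pi^*f$ in (3) is immediate from surjectivity of $\pi$. For surjectivity, a $\G^\C$-invariant holomorphic function $F$ on $\pi^{-1}(U)$ is continuous and orbit-constant, hence constant on orbit closures, hence, since by (2) the unique closed orbit lies in the closure of every orbit of a given fiber, constant on fibers; thus $F$ descends to a continuous $f$ on $U$ with $\pi^*f=F$, and the holomorphicity of $f$ is exactly the finite-dimensional descent statement $\sO_{S^{ss}\sslash H}=(\pi_*\sO_{S^{ss}})^{H}$ (the analytic analogue of the Drezet--Narasimhan descent lemma, \cite{Drezet1989}), with normality of $\M$ used to propagate holomorphicity from the dense stable locus across the strictly semistable locus. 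Finally (4) is formal given (2) and (3): an invariant holomorphic map $\psi\colon\B^{ss}\to Y$ into a complex space is constant on fibers by the same argument, so it descends to a continuous $\bar{\psi}\colon\M\to Y$; composing with a local embedding $Y\hookrightarrow\C^N$ and applying (3) coordinatewise shows $\bar{\psi}$ is holomorphic, and uniqueness follows from surjectivity of $\pi$.

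The main obstacle is the direction ``polystable $\Rightarrow$ closed'' in (2): controlling a sequence $x\cdot g_n$ whose gauge transformations $g_n$ may escape to infinity in $\G^\C$. This is where the genuinely analytic input enters, namely the geodesic convexity and coercivity of the Kempf--Ness/Donaldson functional (equivalently, properness of the $\G$-action on $\mu^{-1}(0)$ together with an Uhlenbeck-type compactness), and it is the one step that reduces neither to soft topology nor to the finite-dimensional slice alone. A secondary technical point is the propagation of holomorphicity in (3) across the singular strictly semistable locus, which relies essentially on the normality of $\M$.
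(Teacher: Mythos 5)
Parts (1) and (4), the ``closed $\Rightarrow$ polystable'' half of (2), and the uniqueness of the closed orbit in each fiber are argued essentially as in the paper, via the Yang--Mills--Higgs retraction $r$ and the identification of fibers with $S$-equivalence classes. The first genuine gap is in the direction ``polystable $\Rightarrow$ closed.'' You reduce it to geodesic convexity and coercivity of the Kempf--Ness/Donaldson functional, but the sequence $g_n$ with $x\cdot g_n\to y$ is not a minimizing sequence for that functional, and convergence of $x\cdot g_n$ in $\sC$ does not by itself bound the functional values along $g_n$; moreover, even granting that the $g_n$ stay at bounded distance from the stabilizer coset, passing from ``bounded'' to ``the limit lies in the orbit'' in this infinite-dimensional setting requires a compactness statement for complex gauge transformations that you only name in passing. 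The paper's Proposition~\ref{sec:s-equiv-class-closedness-polystability} avoids the functional entirely: continuity of $r$ together with properness of the $\G$-action forces $r(y)\in x\G$, hence $x\in\overline{y\G^\C}$, and then the local slice theorem (Proposition~\ref{sec:constr-moduli-space-local-slice}) around $x$ places $y$ in the model $\Z K^\C\times_{K^\C}\G^\C$, where closedness of the orbit of $[0,e]$ is an elementary computation via Lemma~\ref{sec:s-equiv-class-lemma-orbit-space}. You should either adopt this route or supply the coercivity and compactness inputs in full; as written this step does not close.

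The second gap is in (3). You localize to a slice quotient $S^{ss}\sslash H$, but the $\G^\C$-invariant neighborhood produced by the local slice theorem is not automatically saturated for $\pi$: it can meet a fiber without containing its closed orbit or without containing all orbits degenerating onto that closed orbit, and images of non-saturated invariant open sets need not be open in $\M$, so the slice quotient cannot be identified with an open subset of $\M$ and invariant holomorphic functions on $\pi^{-1}(U)$ are not controlled by the slice alone. Supplying saturated slice neighborhoods is precisely the content of Section~\ref{sec:pi-saturated-open} (Proposition~\ref{sec:pi-saturated-open-invariant-open-contains-sat} and Theorem~\ref{sec:pi-saturated-open-slice-satura}, which also use the saturation of $\Z'K^\C$ inside $\Z K^\C$); once these are available, the paper obtains (3) by a direct chain of isomorphisms in Proposition~\ref{sec:pi-saturated-open-struc-sheaf-M}, with no appeal to normality or to the codimension of the strictly semistable locus. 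Your fallback of extending holomorphicity from the stable locus by normality presupposes that (3) is already known over $\M^s$, which again requires saturated slices there, so it does not remove the need for this step.
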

To make sense of $(3)$ in Theorem~\ref{sec:introduction-infi-dim-GIT}, we equip
the space $\B$ with a naive structure sheaf by restricting the sheaf of
$I$-holomorphic functions on $\sC$ to $\B$. Moreover, $\sO_\M$ denotes the
structure sheaf of the moduli space $\M$.

Generalizing the descent lemma for vector bundles in \cite{Drezet1989}, we
will prove the following (cf. \cite[Lemma 2.13]{Sjamaar1995}).
\begin{theoremintro}\label{sec:introduction-descent-lemma}
  Let $\bE\to\B^{ss}$ be a holomorphic $\G^\C$-bundle. Suppose that the
  stabilizer $\G^\C_{(A,\Phi)}$ acts trivially on the fiber $\bE_{(A,\Phi)}$ for
  every $(A,\Phi)\in\mu^{-1}(0)$. Then, there is a holomorphic
  vector bundle $E$ over $\M$ such that $\pi^*E=\bE$. Moreover,
  $\sO(E)=\pi_*\sO(\bE)^{\G^\C}$, where $\sO(E)$ and $\sO(\bE)$ are sheaves of
  holomorphic sections of $E$ and $\bE$, respectively.
\end{theoremintro}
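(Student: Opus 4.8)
The plan is to reduce the global descent problem to a finite-dimensional, local one by means of the Kuranishi slice, and then to carry out the descent on the slice using the reductivity of the local stabilizer. Fix a polystable $(A,\Phi)\in\mu^{-1}(0)$, write $m=\pi(A,\Phi)$, and let $H=\G_{(A,\Phi)}$ and $H^\C=\G^\C_{(A,\Phi)}$; by the Hitchin--Kobayashi correspondence $H^\C$ is the complexification of the compact group $H$, hence reductive. The Kuranishi slice provides an $H^\C$-invariant finite-dimensional slice $S\ni(A,\Phi)$ transverse to the $\G^\C$-orbit, through which a neighborhood of the closed orbit is modeled $\G^\C$-equivariantly on $\G^\C\times_{H^\C}S$, and which identifies a neighborhood of $m$ in $\M$ with the analytic quotient $S\sslash H^\C$ (this is the content of the slice construction in \cite{Fan2020}, compatible with the identification $\sO_\M=\pi_*\sO_{\B^{ss}}^{\G^\C}$ of Theorem~\ref{sec:introduction-infi-dim-GIT}(3)). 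Restricting $\bE$ to $S$ then turns the descent problem into the following: descend the $H^\C$-equivariant holomorphic bundle $\bF:=\bE|_S$ to $S\sslash H^\C$, knowing that $H^\C$ acts trivially on the central fiber $\bF_{(A,\Phi)}=\bE_{(A,\Phi)}$, which is exactly the hypothesis at the polystable point $(A,\Phi)$.

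The heart of the matter is a local trivialization obtained by averaging. Choosing a basis of $\bF_{(A,\Phi)}$ and extending it to holomorphic sections $\sigma_1,\dots,\sigma_r$ of $\bF$ over an $H$-invariant neighborhood of $(A,\Phi)$ in $S$, I would average over the compact group $H$ against Haar measure to produce $H$-invariant holomorphic sections $\tilde\sigma_1,\dots,\tilde\sigma_r$; because $H$ is a compact real form of $H^\C$ and the sections are holomorphic, each $\tilde\sigma_i$ is automatically $H^\C$-invariant. The triviality of the $H^\C$-action on $\bF_{(A,\Phi)}$ guarantees that $\tilde\sigma_i(A,\Phi)=\sigma_i(A,\Phi)$, so the $\tilde\sigma_i$ remain a frame in a (possibly smaller, $H^\C$-invariant) neighborhood. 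This exhibits an $H^\C$-equivariant isomorphism $\bF\cong S\times\bF_{(A,\Phi)}$ with $H^\C$ acting trivially on the fiber factor. Consequently the invariant sections satisfy $\sO(\bF)^{H^\C}=\sO_S^{H^\C}\otimes\bF_{(A,\Phi)}=\sO_{S\sslash H^\C}\otimes\bF_{(A,\Phi)}$, a free module of rank $r=\rank\bE$. Since the sheaf $\pi_*\sO(\bE)^{\G^\C}$ is intrinsic, these local descriptions glue: $\pi_*\sO(\bE)^{\G^\C}$ is locally free over $\sO_\M$, and the associated vector bundle $E$ satisfies $\sO(E)=\pi_*\sO(\bE)^{\G^\C}$ by construction.

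It remains to identify $\pi^*E$ with $\bE$. The adjunction morphism $\pi^*\sO(E)=\pi^*\pi_*\sO(\bE)^{\G^\C}\to\sO(\bE)$ induces a bundle map $\pi^*E\to\bE$, and in the equivariant trivialization above both sides restrict over $S$ to $S\times\bF_{(A,\Phi)}$ with the map equal to the identity; hence it is an isomorphism over the saturation $\G^\C\cdot S$, and therefore globally. I expect the main obstacle to be the slice reduction itself: making precise that a neighborhood of the closed orbit is $\G^\C$-equivariantly modeled on $\G^\C\times_{H^\C}S$ in a way that carries $\bE$ to the $H^\C$-equivariant bundle $\bF$, and verifying that the averaging argument goes through on the (possibly singular) analytic germ $S$ and produces a genuine holomorphic frame on an honestly open, $H^\C$-saturated neighborhood. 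Reductivity of $H^\C$ and the Reynolds/averaging operator over the compact form $H$ are what make the finite-dimensional descent work, in parallel with the algebraic argument of Dr\'ezet--Narasimhan \cite{Drezet1989} and the K\"ahler argument of \cite{Sjamaar1995}.
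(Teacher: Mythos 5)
Your proposal is correct and follows essentially the same route as the paper: reduce to the $\pi$-saturated Kuranishi slice $\Z K^\C\times_{K^\C}\G^\C$, produce an equivariant local frame by averaging over the compact stabilizer (upgrading to $K^\C$-equivariance via holomorphicity, as in Roberts/Dr\'ezet--Narasimhan), and glue the resulting invariant trivializations using $\sO_\M=\pi_*\sO_{\B^{ss}}^{\G^\C}$. The only difference is organizational: the paper first proves a general descent lemma for analytic Hilbert quotients (extending sections from an arbitrary closed orbit using Steinness of $\pi^{-1}(U)$) and then applies it to the slice, whereas you work directly at the fixed central point of the slice, where that extension step is trivial; the saturation point you flag is handled in the paper by the cited result of Mayrand.
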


Now we are able to construct an ample line bundle on $\M$ as follows. Recall
that $\A$ is an infinite-dimensional K\"ahler manifold that is modeled on
$\Omega^1(\g_E)$ (see \cite[p.587]{Atiyah1983}). In \cite{Donaldson1987a},
Donaldson constructed a holomorphic line bundle on $\A$ together with a
Hermitian metric whose curvature is a multiple of K\"ahler form on $\A$ (also
see \cite{Quillen1985}). Moreover, the $\G^\C$-action on $\A$ lifts to this line
bundle. By pulling back this line bundle to $\sC$ by the projection map
$\sC\to\A$, we obtain an $I$-holomorphic line bundle $\bL\to\sC$, and the
$\G^\C$-action on $\sC$ lifts to $\bL$. By slightly modifying the pullback
Hermitian metric, we are able to show that the curvature of the resulting
Hermitian metric $h$ on $\bL$ is $-2\pi\sqrt{-1}\Omega_I$. Then, the
projectivity of the compactification $\bar{\M}$ is shown in the following
result.
\begin{theoremintro}\label{sec:introduction-positivity}\
  \begin{enumerate}
  \item The restriction of the line bundle $\bL\to\sC$ to $\B^{ss}$ descends to $\M$
    and defines a line bundle $\sL\to\M$.
    
  \item $\sL$ extends to a line bundle $\bar{\sL}$ on $\bar{\M}$.
    
  \item $\bar{\sL}$ is ample.
  \end{enumerate}
  Therefore, $\bar{\M}$ is projective, and hence $\M$ is quasi-projective.
\end{theoremintro}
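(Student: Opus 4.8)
The plan is to establish the three items of Theorem~\ref{sec:introduction-positivity} in order, since each builds on the previous one, and then deduce projectivity and quasi-projectivity from item~(3). For item~(1), I would invoke the descent lemma (Theorem~\ref{sec:introduction-descent-lemma}) applied to the restriction $\bL|_{\B^{ss}}$. The hypothesis to check is that the stabilizer $\G^\C_{(A,\Phi)}$ acts trivially on the fiber $\bL_{(A,\Phi)}$ for every $(A,\Phi)\in\mu^{-1}(0)$. For a polystable Higgs bundle, the stabilizer in $\G^\C$ is generated by the central scalars and the automorphisms permuting isomorphic stable summands; since the Donaldson--Quillen line bundle is built from determinants of cohomology, I expect the central $\C^\times$ to act by a weight proportional to the degree of $E$, which we have normalized to $0$, so scalars act trivially, and the finite part of the stabilizer acts trivially on the determinant line as well. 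Thus the descent hypothesis holds and $\sL\to\M$ is obtained, with $\pi^*\sL=\bL|_{\B^{ss}}$.

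For item~(2), I would extend $\sL$ across the boundary divisor $Z=\bar{\M}\setminus\M$. Since $\bar{\M}$ is normal (Theorem~A) and $Z$ has pure codimension~$1$, a line bundle on $\M$ does not automatically extend, so the extension must be constructed from the symplectic-cut description of $\bar{\M}$. The idea is that the symplectic cut is itself a GIT-type quotient of an enlarged space on which the lifted $\G^\C$-action and the line bundle $\bL$ both extend; descending the extended bundle across the cut locus then produces $\bar{\sL}$ restricting to $\sL$ on the open part $\M$. Normality of $\bar{\M}$ is what guarantees this extension is unique once it exists on a dense open set meeting $Z$ in codimension~$1$.

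For item~(3), ampleness is the crux and the main obstacle. Here I would exploit that the Hermitian metric $h$ on $\bL$ has curvature $-2\pi\sqrt{-1}\,\Omega_I$, so the descended metric on $\sL$ has curvature equal to (a multiple of) the K\"ahler form induced on the smooth locus of $\M$ by the hyperK\"ahler reduction, making $\sL$ positive there. To upgrade positivity to ampleness on the possibly singular projective-looking space $\bar{\M}$, I would check the Nakai--Moishezon-type criterion or, more robustly, show directly that a high power $\bar{\sL}^{\otimes k}$ is generated by global sections that separate points and tangent directions, realizing $\bar{\M}$ as a closed subvariety of projective space. The positivity of the curvature form, which descends to a K\"ahler current on $\bar{\M}$, provides the needed strict positivity; the delicate point is controlling the sections and the metric near $Z$ and at the singularities of $\bar{\M}$, where the smooth curvature computation breaks down. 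Once $\bar{\sL}$ is shown ample, Grothendieck's criterion gives that $\bar{\M}$ is projective; since $\M$ is open and dense in the projective variety $\bar{\M}$ with complement a closed subscheme, $\M$ is quasi-projective, completing the proof.
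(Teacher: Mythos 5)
Your overall architecture (descend $\bL$ to obtain $\sL$, extend via the symplectic-cut construction, then prove ampleness) matches the paper, and items (1) and (2) are on track in outline, but there are two problems. First, in item (1) your verification of the descent hypothesis is not correct as stated: the $\G^\C$-stabilizer of a polystable Higgs bundle $(\E_1,\Phi_1)^{\oplus m_1}\oplus\cdots\oplus(\E_r,\Phi_r)^{\oplus m_r}$ is $\prod_i GL(m_i,\C)$, not ``central scalars plus a finite permutation part,'' so a weight count on the central $\C^*$ does not settle the action of the full stabilizer on the fiber. The paper instead uses the moment-map identity: the vertical part of the infinitesimal action of $\xi\in\Lie(\G_{(A,\Phi)})$ on the fiber is $2\pi\sqrt{-1}\langle\mu,\xi\rangle s$, which vanishes on $\mu^{-1}(0)$; connectedness of $\G$-stabilizers then gives triviality of the $\G_{(A,\Phi)}$-action, hence of the action of its complexification.

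Second, and more seriously, item (3) --- which you correctly identify as the crux --- is not actually proved. You propose Nakai--Moishezon or directly producing sections of $\bar{\sL}^{\otimes k}$ separating points and tangent directions, and you explicitly flag that the delicate point is controlling the metric near $Z$ and at the singularities of $\bar{\M}$, where the smooth curvature computation breaks down; but you never resolve that point, and it is exactly where all the work lies: $\bar{h}$ is only continuous, smooth merely stratum-wise, and a positive Chern current on a singular compact space does not by itself produce sections. The paper's route is: (i) show $c_1(\bar{\sL},\bar{h})$ is a positive current by exhibiting local potentials that are continuous, strictly plurisubharmonic on the open dense top stratum, and then plurisubharmonic everywhere by the extension theorem for plurisubharmonic functions on the normal space $\bar{\M}$; (ii) for every irreducible closed $Y\subset\bar{\M}$ with $\dim Y>0$, pass to a desingularization and apply Popovici's bigness criterion, which only requires $\int_Y c_1(\bar{\sL},\bar{h})_{ac}^{\dim Y}>0$ --- verified by integrating over the open subset $Y_{reg}\cap S$ lying in a stratum where the curvature is an honest K\"ahler form; (iii) conclude ampleness from Grauert's criterion, since a big line bundle on $Y$ admits a nontrivial section which must vanish somewhere (otherwise the bundle would be holomorphically trivial and could not be big). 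Without some substitute for (ii)--(iii), your ampleness argument does not go through.
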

In the proof of $(1)$ in Theorem~\ref{sec:introduction-positivity}, a byproduct
is that the moduli space $\M$ has a weak K\"ahler metric. More precisely, we
recall that $\M$ admits an orbit type stratification such that each stratum $Q$
is a complex submanifold of $\M$ together with a K\"ahler form $\omega_Q$ (see
Section~\ref{sec:orbit-type-strat}). A weak K\"ahler metric on $\M$ is a family
of continuous stratum-wise strictly plurisubharmonic functions $\rho_i\colon
U_i\to\R$ such that $\{U_i\}$ is an open covering of $\M$ and that
$\rho_i-\rho_j=\Re(f_{ij})$ for some holomorphic function
$f_{ij}\in\sO_{\M}(U_i\cap U_j)$. Here, a continuous stratum-wise strictly
plurisubharmonic function is a continuous function that is smooth and strictly
plurisubharmonic along every stratum $Q$ in the orbit type stratification of
$\M$. Note that stratum-wise strictly plurisubharmonic functions are not
necessarily strictly plurisubharmonic. If each $\rho_i$ can be chosen to be
strictly plurisubharmonic, then $\{\rho_i\colon U_i\to\R\}$ defines a (strong)
K\"ahler metric on $\M$. (see \cite{Heinzner1994a} for more details on strictly
plurisubharmonic functions). Finally, since
$\sqrt{-1}\partial\bar{\partial}(\rho_i|_Q)$ patches together, the K\"ahler
metric on $\M$ restricts to $Q$. Then, our last result is the following.
\begin{theoremintro}\label{sec:introduction-singular-Kahler-metric}
  The moduli space $\M$ admits a weak K\"ahler metric whose restriction to
  each stratum $Q$ in the orbit type stratification of $\M$ is the K\"ahler form
  $\omega_Q$.
\end{theoremintro}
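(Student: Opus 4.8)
The plan is to read off the local potentials of the desired weak Kähler metric directly from the Hermitian line bundle $(\sL,\hat{h})$ on $\M$, and then to identify their complex Hessians along the strata with the reduced forms $\omega_Q$. First I would descend the metric. The metric $h$ on $\bL$ is $\G$-invariant, and over $\mu^{-1}(0)\cap\B$ the stabilizer acts trivially on the fibers; hence, exactly as in the proof of Theorem~\ref{sec:introduction-positivity}(1) (using the descent lemma, Theorem~\ref{sec:introduction-descent-lemma}), $h$ descends to a Hermitian metric $\hat{h}$ on $\sL\to\M$. The delicate point already here is to check that $\hat{h}$ is \emph{continuous} on all of $\M$ while being smooth along each orbit-type stratum $Q$: continuity at the lower (smaller orbit type) strata is not formal and must be extracted from the local Kuranishi slice model of \cite{Fan2020}, whereas the stratum-wise smoothness is the smoothness of the finite-dimensional Kähler reduction.

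Granting $\hat{h}$, I would cover $\M$ by open sets $U_i$ on which $\sL$ carries a nowhere-vanishing holomorphic frame $s_i$ and set $\rho_i=-\frac{1}{2\pi}\log|s_i|^2_{\hat{h}}$, the constant being dictated by the curvature $-2\pi\sqrt{-1}\Omega_I$ of $h$. Continuity of $\rho_i$ follows from that of $\hat{h}$ and $s_i$. Writing $s_i=g_{ij}s_j$ with $g_{ij}\in\sO_\M^*(U_i\cap U_j)$ gives $\rho_i-\rho_j=\Re(f_{ij})$ with $f_{ij}=-\frac{1}{\pi}\log g_{ij}$ holomorphic, so the compatibility condition in the definition of a weak Kähler metric is satisfied.

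The core is the curvature identity $\sqrt{-1}\partial\bar{\partial}(\rho_i|_Q)=\omega_Q$ on each stratum. To prove it I would lift $\rho_i$ to $\B^{ss}$: pulling back $s_i$ along $\pi\colon\B^{ss}\to\M$ produces a $\G^\C$-invariant holomorphic frame $\pi^*s_i$ of $\bL$ (via $\pi^*\sL=\bL|_{\B^{ss}}$), and the $\G$-invariant function $\Psi_i=-\frac{1}{2\pi}\log|\pi^*s_i|^2_h$ satisfies $\sqrt{-1}\partial\bar{\partial}\Psi_i=\Omega_I$ on the smooth locus, since the curvature of $h$ is $-2\pi\sqrt{-1}\Omega_I$ and the logarithm of the transition factor to a local frame of $\bL$ on $\sC$ is pluriharmonic. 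On $\mu^{-1}(0)\cap\B$ the function $\Psi_i$ descends to $\pi^*\rho_i$, so restricting to $\tilde{Q}=\pi^{-1}(Q)\cap\mu^{-1}(0)\cap\B$ yields $\Psi_i|_{\tilde{Q}}=\pi^*(\rho_i|_Q)$. Combined with the Marsden--Weinstein identity $\pi^*\omega_Q=\iota^*\Omega_I$ on $\tilde{Q}$ (with $\iota\colon\tilde{Q}\hookrightarrow\sC$), which is precisely how $\omega_Q$ is defined in Section~\ref{sec:orbit-type-strat}, the claim reduces to transferring $\sqrt{-1}\partial\bar{\partial}$ through the quotient $\tilde{Q}\to Q$. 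Because $\tilde{Q}\subset\mu^{-1}(0)$ is only a real submanifold of $\sC$, this transfer is not immediate; I would carry it out inside a holomorphic Kuranishi slice $S$ through a polystable point with compact stabilizer $H$, where $S$ is a genuine complex manifold, $Q$ is locally the Kähler reduction $S/\!\!/H^\C$, and $\Psi_i|_S$ is an $H$-invariant potential for $\Omega_I|_S$, and then invoke the finite-dimensional stratified Kähler reduction (cf.\ \cite[Lemma 2.13]{Sjamaar1995}). Strict plurisubharmonicity of $\rho_i|_Q$ is then automatic from the positivity of $\omega_Q$.

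I expect the main obstacle to be exactly this transfer of the complex Hessian across the non-holomorphic quotient $\tilde{Q}\to Q$, together with the continuity of $\hat{h}$ across strata: both force one to work inside the local slice model and to control the reduced Kähler structure uniformly as the orbit type jumps. Once the slice computation is in place, matching $\sqrt{-1}\partial\bar{\partial}(\rho_i|_Q)$ with the intrinsic form $\omega_Q$ of Section~\ref{sec:orbit-type-strat} is only a question of normalization, and assembling the $\rho_i$ into a weak Kähler metric restricting to each $\omega_Q$ is then immediate.
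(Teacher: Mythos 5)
Your proposal is correct, and its skeleton coincides with the paper's: descend $\bL|_{\B^{ss}}$ to $\sL\to\M$ via Theorem~\ref{sec:introduction-descent-lemma}, take $\G^\C$-equivariant nowhere-vanishing frames $s_i$ over $\pi$-saturated sets $\pi^{-1}(U_i)$, restrict $-\frac{1}{2\pi}\log|s_i|_h^2$ to $\mu^{-1}(0)$ and descend through the homeomorphism $(\mu^{-1}(0)\cap\B)/\G\cong\M$, then check stratum-wise that the result is a potential for $\omega_Q$. (The paper descends the potential rather than the metric $\hat{h}$, but these give the same functions; the continuity you flag as delicate is in fact formal, since a continuous $\G$-invariant function on $\mu^{-1}(0)$ descends continuously through the quotient map.) There are two genuine points of divergence. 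First, the compatibility condition: you read $\rho_i-\rho_j=\Re\bigl(-\frac{1}{\pi}\log g_{ij}\bigr)$ directly off the transition functions of $\sL$, valid on all of $U_i\cap U_j$ at once (after refining the cover so that a branch of $\log g_{ij}$ exists). The paper instead establishes pluriharmonicity of $\rho_i-\rho_j$ only on $\M^s\cap U_i\cap U_j$, where both are smooth potentials for the same form, and then extends $f_{ij}$ across $\M\setminus\M^s$ using normality together with the codimension bound $\codim(\M\setminus\M^s)\geq 4g-6\geq 2$ of Corollary~\ref{sec:orbit-type-strat-codimension-estimate}; your route is more elementary and makes that extension step unnecessary. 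Second, the transfer of $\sqrt{-1}\partial\bar{\partial}$ through the non-holomorphic quotient $\pi^{-1}(Q)\cap\mu^{-1}(0)\to Q$, which you rightly isolate as the crux: the paper disposes of it in one line in Proposition~\ref{sec:kahler-metric-moduli-pluri-on-stratum} by appealing to the construction of $\omega_Q$ in Proposition~\ref{sec:orbit-type-strat-main-results}, i.e.\ to the standard fact that an invariant K\"ahler potential restricted to the zero level of the moment map descends to a potential for the reduced form, whereas your plan of localizing in a Kuranishi slice and invoking the finite-dimensional statement of \cite[Lemma 2.13]{Sjamaar1995} supplies exactly the justification the paper leaves implicit. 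I see no gap.
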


In \cite{Fan2020a}, it is shown that each stratum $Q$ in the orbit type
stratification of $\M$ also admits a complex symplectic form such that they glue
together to define a complex Poisson bracket on the structure sheaf $\sO_\M$ of
$\M$. In fact, each stratum $Q$ is a hyperK\"ahler manifold. The result in
\cite{Fan2020a} and Theorem~\ref{sec:introduction-singular-Kahler-metric} show
that the stratum-wise defined hyperK\"ahler structure on $\M$ can be extended to
two global holomorphic objects, a complex Poisson bracket and a weak K\"ahler
metric. Finally, we should remark that we are unable to prove that the weak
K\"ahler metric on $\M$ is strong, although it is highly likely.

Now we describe the structures of the paper and the ideas behind the proofs
of the main theorems. The key tools in the proof of
Theorem~\ref{sec:introduction-infi-dim-GIT} are a local slice theorem for the
$\G^\C$-action and the retraction $r\colon\B^{ss}\to\mu^{-1}(0)\cap\B$ defined
by the Yang-Mills-Higgs flow. We will prove
Theorem~\ref{sec:introduction-infi-dim-GIT} in
Section~\ref{sec:infin-dimens-git}. To prove
Theorem~\ref{sec:introduction-descent-lemma}, we will first prove a descent
lemma for analytic Hilbert quotients of complex spaces. By definition, if $G$ is
a complex Lie group, then an analytic Hilbert quotient of a holomorphic
$G$-space $X$ is a $G$-invariant surjective holomorphic map $\pi\colon X\to Z$
such that inverse images of Stein subspaces are Stein, and $\sO_Z=\pi_*\sO_X^G$.
This notion is an analytic analogue of good quotients of algebraic varieties
(see \cite{Heinzner1998}). The proof of
Theorem~\ref{sec:introduction-descent-lemma} is an adaptation of Drezet and
Narasimhan's argument in \cite{Drezet1989}. Then, this result will be applied to
Kuranishi local models that are used to construct the moduli space $\M$, since
Kuranishi local models are analytic Hilbert quotients of Kuranishi spaces. In
this way, we can show that every point in $\B^{ss}$ admits an open neighborhood
that is saturated with respect to the quotient map $\pi\colon\B^{ss}\to\M$ and
in which the vector bundle $\bE$ in question is trivial. This shows that $\bE$
descends to $\M$. These results will be proved in
Section~\ref{sec:desc-lemm-vect}. After
Theorem~\ref{sec:introduction-infi-dim-GIT} and
\ref{sec:introduction-descent-lemma} are proved, we are ready to prove
Theorem~\ref{sec:introduction-singular-Kahler-metric} and $(1)$ in
Theorem~\ref{sec:introduction-positivity}. By verifying the hypothesis in
Theorem~\ref{sec:introduction-descent-lemma} for the line bundle
$\bL|_{\B^{ss}}$, we can easily show that it defines a line bundle $\sL\to\M$.
To show Theorem~\ref{sec:introduction-singular-Kahler-metric}, we may choose an
open covering $\{U_i\}$ of $\M$ such that $\sL$ is trivial over each
$\pi^{-1}(U_i)$. Then, we choose a holomorphic section $s_i$ of $\bL$ over each
$\pi^{-1}(U_i)$ that is $\G^\C$-equivariant and nowhere vanishing. Then, we
consider the functions
\begin{equation}
  u_i=-\frac{1}{2\pi}\log|s_i|_h^2,
\end{equation}
where $h$ is the Hermitian metric on $\bL$. Since it is $\G$-invariant, its
restriction to $\pi^{-1}(U_i)\cap\mu^{-1}(0)$ defines a continuous map
$u_{i,0}\colon U_i\to\R$. It will be shown that the restriction of each
$u_{i,0}$ to a stratum $Q$ is smooth and a K\"ahler potential for the K\"ahler
form $\omega_Q$ on $Q$. In this way, we obtain a family of continuous
stratum-wise strictly plurisubharmonic functions $u_{i,0}\colon U_i\to\R$ such
that $\{U_i\}$ covers $\M$. Then, the normality of $\M$ and the fact that
$\codim_x(\M\setminus\M^s)\geq2$ for all $x\in\M\setminus\M^s$ show that
$\{u_{i,0}\colon U_i\to\R\}$ defines a weak K\"ahler metric. These results will
be proved in Section~\ref{sec:kahler-metric-moduli}.

Then, we will prove Theorem~\ref{sec:introduction-compactify} and the rest of
the statements in Theorem~\ref{sec:introduction-positivity} in
Section~\ref{sec:proj-comp}. Following Hausel's strategy in \cite{Hausel1998},
we will use the symplectic cut to compactify $\M$. Recall that $\M$ admits a
holomorphic $\C^*$-action. Moreover, the induced $U(1)$-action is stratum-wise
Hamiltonian. More precisely, the restriction of the $\G$-invariant map
\begin{equation}
  f(A,\Phi)=-\frac{1}{4\pi^2}\frac{1}{2}\|\Phi\|_{L^2}^2\colon\sC\to\R
\end{equation}
to $\mu^{-1}(0)$ defines a continuous map $f\colon\M\to\R$. When restricted to a
stratum $Q$, $f|_Q$ is smooth and a moment map for the induced $U(1)$-action
with respect to the K\"ahler form $\omega_Q$ on $Q$. In this sense, $f$ is a
stratum-wise moment map on $\M$. Then, we consider the direct product
$\M\times\C$. If we let $\C^*$ act on $\C$ by multiplication, $\M\times\C$
admits a diagonal $\C^*$-action. The induced $U(1)$-action is also stratum-wise
Hamiltonian. Here, the stratification of $\M\times\C$ is given by the disjoint
union of $Q\times\C$, where $Q$ ranges in the orbit type stratification of $\M$.
Moreover, the stratum-wise moment map on $\M\times\C$ is given by
\begin{equation}
  \tilde{f}=f-\frac{1}{2}\|\cdot\|^2.
\end{equation}
By \cite[Theorem 8.1]{Hitchin1987b} or \cite[Theorem 2.15]{Wentworth2016}, the
Hitchin fibration $h$ is proper, and hence the nilpotent cone $h^{-1}(0)$ is
compact. Therefore, we are able to choose a level $c<0$ such that
$h^{-1}(0)\subset f^{-1}[0,c)$. Then the symplectic cut of $\M$ at the level
$c$ is defined as the singular symplectic quotient $\tilde{f}^{-1}(c)/U(1)$,
and it should be a compactification of $\M$. Here, the rough idea is that the
subspace $f^{-1}[0,c]$ is compact by the properness of $f$ (see
\cite[Proposition 7.1]{Hitchin1987b}). Moreover, if a Higgs bundle is away from
$ f^{-1}[0,c]$, following its $\C^*$-orbit, it ``flows'' into $ f^{-1}[0,c]$,
since the $0$-limit of the $\C^*$-action on a Higgs bundle always exists, and
hence the limiting point is a $\C^*$-fixed point and is contained in the nilpotent
cone. Therefore, the moduli space $\M$ should be ``contained in''
$\tilde{ f}^{-1}(c)/U(1)$, which is compact because of the properness of $ f$.

To carry out this idea rigorously, we first need to equip
$\tilde{ f}^{-1}(c)/U(1)$ with the structure of a complex space. Let
$(\M\times\C)^{ss}$ be the subspace of semistable points in $\M\times\C$
determined by the stratum-wise moment map $\tilde{ f}-c$. More precisely, it
consists of points in $\M\times\C$ whose $\C^*$-orbit closures intersect
$\tilde{ f}^{-1}(c)$. To show that the analytic Hilbert quotient of
$(\M\times\C)^{ss}$ by $\C^*$ exists, we run into a technical
difficulty. Since we are unable to prove that the K\"ahler metric on $\M$ is a
strong one, we cannot directly apply the analytic GIT developed by Heinzner and
Loose in \cite{Heinzner1994} and must take a detour. To motivate the following
detour, let us recall that a complex reductive Lie group acts properly at a
point if and only if its stabilizer at that point is finite, provided that a
local slice theorem is available around that point. Since the $\C^*$-stabilizers
are finite away from the nilpotent cone $h^{-1}(0)$, it is reasonable to expect
that the $\C^*$-action acts properly away from the nilpotent cone. Hence, we
consider the $\C^*$-invariant open subset
$W=(\M\times\C)\setminus(h^{-1}(0)\times\{0\})$. By the properness of the
Hitchin fibration $h$, we can show that the $\C^*$-action on $W$ is proper, and
hence the analytic Hilbert quotient of $W$ by $\C^*$ exists. Moreover, $W/\C^*$
is a geometric quotient. Then, we use the properness of $h$ and $ f$ to show
that $W=(\M\times\C)^{ss}=\C^*\tilde{ f}^{-1}(c)$. It then follows that the
inclusion $\tilde{ f}^{-1}(c)\hookrightarrow W$ induces a homeomorphism
$\tilde{ f}^{-1}(c)/U(1)\to W/\C^*$. Now, note that $W$ can be written as a
disjoint union
\begin{equation}
  W=(\M\setminus h^{-1}(0)\times\{0\})\cup(\M\times\C^*).
\end{equation}
We will show that the quotient $(\M\times\C^*)/\C^*$ is biholomorphic to the
moduli space $\M$, and therefore $\bar{\M}=W/\C^*$ is a compactification of
$\M$. To show the rest of the statements in
Theorem~\ref{sec:introduction-positivity}, we pullback the line bundle
$\sL\to\M$ to $\M\times\C$ by the projection map $\M\times\C\to\M$ to obtain a
line bundle $\sL_\C\to\M\times\C$. By slightly modifying the Hermitian metric
$h$ on $\sL_\C$, we can easily show that the resulting Hermitian metric, again
denoted by $h$, is smooth along each stratum $Q\times\C$, and the curvature is
$-2\pi\sqrt{-1}\omega_{Q\times\C}$, where $\omega_{Q\times\C}$ is the product
K\"ahler metric on $Q\times\C$. By the descent lemma for the analytic Hilbert
quotients, the restriction of $\sL_\C$ to $W$ induces a line bundle
$\bar{\sL}\to\bar{\M}$ such that the restriction of $\bar{\sL}$ to $(\M\times\C^*)/\C^*$
is isomorphic to $\sL\to\M$. In this sense, the line bundle $\sL\to\M$ extends
to the line bundle $\bar{\sL}\to\bar{\M}$. Moreover, the Hermitian metric $h$ on
$\sL_\C$ also induces a Hermitian metric $\bar{h}$ on $\bar{\sL}$. Then, we will use
Popovici's bigness criterion (see \cite[Theorem 1.3]{Popovici2008}) to show that
the restriction of $\bar{\sL}$ to any irreducible closed complex subspace (not
reduced to a point) of $\bar{\M}$ is big. Then, the ampleness of $\bar{\sL}$ follows
from a theorem of Grauert (see \cite{Grauert1962}): a line bundle over a compact
complex space is ample if its restriction to any irreducible closed complex
subspace (not reduced to a point) admits a nontrivial holomorphic section that
vanishes somewhere on that subspace. These results will be proved in
Section~\ref{sec:proj-comp}.

Finally, we remark that all the complex spaces in this paper are assumed to be
reduced. Moreover, if necessary, we will work with the $L_k^2$-topology on $\sC$
and $L_{k+1}^2$-topology on $\G^\C$, where $k>1$ is fixed. By \cite[Corollary
3.13]{Fan2020a}, the topology of $\M$ does not depend on the choice of $k$.

\vspace{0.5cm}
\noindent\textbf{Acknowledgments}. This paper is part of my Ph.D. thesis. I
would like to thank my advisor, Professor Richard Wentworth, for suggesting this
problem and his generous support and guidance. I also thank Reyer Sjamaar,
Daniel Greb, and Ruadhai Dervan for helpful discussions.

\section{Preliminaries}

\subsection{Construction of the moduli space}\label{sec:constr-moduli-space}
In this section, we review the results in \cite{Fan2020}. Recall that every Higgs bundle
$(A,\Phi)\in\B$ defines a deformation complex
\begin{equation}
  C_{\mu_\C}(A,\Phi)\colon\qquad\Omega^0(\g_E^\C)\xrightarrow{D''}\Omega^{0,1}(\g_E^\C)\oplus\Omega^{1,0}(\g_E^\C)\xrightarrow{D''}\Omega^{1,1}(\g_E^\C),
\end{equation}
where $D''=\bar{\partial}_A+\Phi$. If the Higgs bundle $(A,\Phi)$ is understood,
we will simply write $C_{\mu_\C}$ instead of $C_{\mu_\C}(A,\Phi)$.
\begin{proposition}[{\cite[\S1]{Simpson1992} and \cite[\S10]{Simpson1994a}}]
  The complex $C_{\mu_\C}$ is an elliptic complex. Moreover, the formal
  $L^2$-adjoint $(D'')^*$ satisfies the K\"ahler identities
  \begin{equation}
    (D'')^*=-i[*,D'],\qquad(D')^*=+i[*,D''],
  \end{equation}
  where $D'=\partial_A+\Phi^*$ and $*$ is the Hodge star.
\end{proposition}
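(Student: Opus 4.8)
The plan is to prove the two assertions separately: that $C_{\mu_\C}$ is a complex with exact symbol sequence, and that the stated operator identities hold for the formal $L^2$-adjoints.

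First, to confirm that $C_{\mu_\C}$ is a complex I would check $D''\circ D''=0$ directly. Expanding $D''=\bar\partial_A+\Phi$ and applying the Leibniz rule, the composite collects three contributions: $\bar\partial_A^2=F_A^{0,2}$, which vanishes because $M$ is a Riemann surface; $\bar\partial_A\Phi$, which vanishes because $(A,\Phi)\in\B$; and $[\Phi\wedge\Phi]$, a $(2,0)$-form on $M$ and hence zero. For ellipticity I would compute the principal symbol at a real covector $\xi\neq0$. Since $\Phi$ and the bracket terms act as zeroth-order operators, the symbol of $D''$ coincides with that of the Dolbeault operator $\bar\partial_A$: at the first stage $s\mapsto(\xi^{0,1}\wedge s,0)$ and at the second stage $(a,\phi)\mapsto\xi^{0,1}\wedge\phi$, where $\xi^{0,1}$ is the $(0,1)$-part of $\xi$. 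Because $\xi$ is real and nonzero, $\xi^{0,1}\neq0$, so wedging with $\xi^{0,1}$ is an isomorphism $\Lambda^0\otimes\g_E^\C\to\Lambda^{0,1}\otimes\g_E^\C$ and $\Lambda^{1,0}\otimes\g_E^\C\to\Lambda^{1,1}\otimes\g_E^\C$. A short diagram chase then shows the symbol sequence is exact: the image of the first symbol is precisely the $\Omega^{0,1}$-summand, which is the kernel of the second. Hence $C_{\mu_\C}$ is elliptic.

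For the K\"ahler identities I would split each operator into its first-order and algebraic parts, $D''=\bar\partial_A+\Phi$ and $D'=\partial_A+\Phi^*$, and treat the two contributions to $(D'')^*$ and $(D')^*$ (taken with respect to the $L^2$ inner product) independently. The connection part is governed by the classical K\"ahler identities on the Hermitian Riemann surface $(M,\omega)$; these hold because $A$ is unitary and every Riemann surface is K\"ahler, and on $M$ the Hodge star $*$ agrees with the contraction operator $\Lambda$ on the top-degree forms occurring in the complex, which is what permits writing the identities with $*$ in place of $\Lambda$. The algebraic part is verified pointwise: using that $\Phi^*$ is the fibrewise Hermitian adjoint of $\Phi$ together with the explicit action of $*$ on $(1,0)$- and $(0,1)$-forms (multiplication by $-i$ and $+i$ respectively), one checks by linear algebra that the adjoint of wedging-and-bracketing with $\Phi$ equals $-i$ times the corresponding commutator built from $\Phi^*$, and symmetrically with the roles of $D'$ and $D''$ exchanged. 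Adding the two contributions yields $(D'')^*=-i[*,D']$ and $(D')^*=i[*,D'']$.

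I expect the algebraic term to be the main obstacle. The first-order K\"ahler identities are classical and the symbol computation for ellipticity is routine, but matching the signs and factors of $i$ relating $\Phi$, its fibrewise adjoint $\Phi^*$, and the Hodge star across the various bidegrees requires care; one must also track which bidegree components survive when the commutators $[*,D']$ and $[*,D'']$ are restricted to the terms appearing in $C_{\mu_\C}$.
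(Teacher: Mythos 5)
The paper does not actually prove this proposition: it is quoted verbatim from Simpson, with the citation to \cite[\S1]{Simpson1992} and \cite[\S10]{Simpson1994a} serving as the proof. Your outline reproduces the standard argument from those references and is essentially correct: $D''\circ D''=0$ reduces to $\bar{\partial}_A\Phi=0$ plus bidegree vanishing on a curve, the principal symbol of $D''$ coincides with that of $\bar{\partial}_A$ and the resulting symbol sequence is exact, and the K\"ahler identities follow by treating the first-order (Chern connection) part via the classical Nakano identities and the zeroth-order part $\Phi$ by a pointwise linear-algebra computation. The one point to handle with care is the one you flag at the end: these identities are normally written with the Lefschetz contraction $\Lambda$ rather than the Hodge star, and $[*,D']$ agrees with $[\Lambda,D']$ only after projecting onto the bidegrees actually occurring in $C_{\mu_\C}$ (they differ, for instance, on $\Omega^0(\g_E^\C)$), so that reduction must be carried out explicitly rather than asserted.
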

Let $\bH^1$ denote the harmonic space for the cohomology $H^1(C_{\mu_\C})$.
Then, we review the Kuranishi slice method that is used to construct the moduli
space. Fix $(A,\Phi)\in\m^{-1}(0)$ with $\G$-stabilizer $K$, where
$\m=(\mu,\mu_\C)$ and $\mu_\C(A,\Phi)=\frac{1}{4\pi^2}\bar{\partial}_A\Phi$. As
a consequence, $K^\C$ is the $\G^\C$-stabilizer at $(A,\Phi)$. Note that since
the $\G$-action on $\sC$ is proper, $K$ is compact. Then, there are an open ball
in the $L^2$-topology around 0 in $\bH^1$ and a Kuranishi map $\theta\colon
B\to\sC$. Its associated Kuranishi space $\Z$ is defined as
$\Z=\theta^{-1}(\B)$. It can be shown that $\Z$ is a closed complex subspace of
$B$. Since $\B^{ss}$ is open in $\B$, if $B$ is sufficiently small, then we
obtain a restriction $\theta\colon\Z\to\B^{ss}$. We list some of its
properties that will be used later.
\begin{proposition}\ \label{sec:constr-moduli-space-kura-map-prop}
  \begin{enumerate}
  \item $\theta\colon B\to\sC$ is $I$-holomorphic, and $\theta(0)=(A,\Phi)$.
    
  \item The derivative of $\theta\colon B\to\sC$ at $0$ is the inclusion map
    $\bH^1\hookrightarrow T_{(A,\Phi)}\sC$.

  \item $\theta\colon B\to\sC$ extends to a $K^\C$-equivariant holomorphic map
    $\theta\colon BK^\C\to\sC$.
    
  \item If $B$ is sufficiently small, then $\theta$ preserves the stabilizers in
    the sense that $(K^\C)_x=(\G^\C)_{\theta(x)}$ for any $x\in\Z$.
    
  \item If $x\in\Z$ has a closed $K^\C$-orbit in $\bH^1$, then $\theta(x)$ is a polystable
    Higgs bundle.
  \end{enumerate}
\end{proposition}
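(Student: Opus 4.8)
The plan is to realize $\theta$ as a holomorphic Kuranishi slice for the $\G^\C$-action through $(A,\Phi)$ and to read the five properties off its construction. First I would record the Hodge decomposition of the middle term of $C_{\mu_\C}$,
\[
  \Omega^{0,1}(\g_E^\C)\oplus\Omega^{1,0}(\g_E^\C)=\im D''\oplus\bH^1\oplus\im (D'')^*,
\]
where the first $D''$ is the linearized $\G^\C$-action at $(A,\Phi)$ and $\bH^1=\ker D''\cap\ker(D'')^*$ in the middle degree. Writing $Q$ for the quadratic term of $\mu_\C$ about $(A,\Phi)$ and $G,P$ for the Green's operator and harmonic projection of the top-degree Laplacian $D''(D'')^*$, I would set $\theta(v)=(A,\Phi)+v+f(v)$, where $f(v)\in\im(D'')^*$ is the unique small solution of
\[
  f(v)=-(D'')^*G\,(\mathbf{1}-P)\,Q\bigl(v+f(v)\bigr).
\]
Since $\mu_\C=\tfrac1{4\pi^2}\bar{\partial}_A\Phi$ is $I$-holomorphic and $(D'')^*,G,P$ are complex-linear for $I$, the holomorphic implicit function theorem yields a unique $I$-holomorphic $f$ with $f(0)=0$, giving $(1)$. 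As $Q$ is quadratic, $f(v)=O(|v|^2)$, so $d\theta_0$ is the inclusion $\bH^1\hookrightarrow T_{(A,\Phi)}\sC$, which is $(2)$. The construction arranges that only the harmonic part of the obstruction survives, so $\Z=\theta^{-1}(\B)=\{v\in B:P\,Q(v+f(v))=0\}$ is a closed complex subspace of $B$ cut out by finitely many equations.

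For $(3)$ and $(4)$ I would use that the compact stabilizer $K$ acts unitarily on $T_{(A,\Phi)}\sC$ and hence preserves $\bH^1$; being a complex $K$-representation, $\bH^1$ carries a canonical holomorphic $K^\C$-action. Choosing $B$ to be a $K$-invariant ball and observing that all the data $D'',(D'')^*,G,P,Q$ are $K$-equivariant (they are built from the $K$-fixed point $(A,\Phi)$), uniqueness of $f$ makes $\theta|_B$ a $K$-equivariant holomorphic map. Because both actions are holomorphic and agree on the real form $K$, the identity $\theta(v\cdot k)=\theta(v)\cdot k$ extends by analytic continuation from $k\in K$ to $k\in K^\C$; setting $\theta(v\cdot k):=\theta(v)\cdot k$ then extends $\theta$ holomorphically and $K^\C$-equivariantly to the saturation $BK^\C$, which is $(3)$. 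The inclusion $(K^\C)_x\subseteq(\G^\C)_{\theta(x)}$ in $(4)$ is immediate, since $x\cdot k=x$ forces $\theta(x)=\theta(x\cdot k)=\theta(x)\cdot k$. For the reverse inclusion I would shrink $B$ and invoke the local slice theorem for the $\G^\C$-action used to build $\M$: it realizes a neighborhood of $\theta(x)$ as a slice on which the full $\G^\C$-stabilizer is contained in $K^\C$ and there coincides with $(K^\C)_x$.

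The essential content is $(5)$, a finite-dimensional form of the Hitchin--Kobayashi correspondence. The linear unitary $K$-action on the flat K\"ahler space $\bH^1$ carries a homogeneous quadratic moment map $\nu\colon\bH^1\to\fk^*$, and by the Kempf--Ness theorem a $K^\C$-orbit in $\bH^1$ is closed if and only if it meets $\nu^{-1}(0)$. I would then show that the slice is compatible with the \emph{real} moment maps, namely that $\theta$ carries $\nu^{-1}(0)\cap\Z$ into $\mu^{-1}(0)\cap\B$. Granting this, if $x\in\Z$ has a closed $K^\C$-orbit, choose $x_0\in(K^\C\cdot x)\cap\nu^{-1}(0)$; then $\theta(x_0)\in\mu^{-1}(0)\cap\B$ solves Hitchin's equation, and since $\theta(x_0)$ lies in the $\G^\C$-orbit of $\theta(x)$ by $(3)$, the Hitchin--Kobayashi correspondence recalled in the introduction forces $\theta(x)$ to be polystable. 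This real moment-map matching is the step I expect to be the main obstacle: the correction $f(v)$ is designed to control only the complex moment map $\mu_\C$, so one must verify separately---e.g.\ via the K\"ahler analogue of the Marle--Guillemin--Sternberg normal form---that it does not disturb the agreement of $\mu\circ\theta$ with $\nu$ near $0$. The transversality in $(2)$ and the equivariance in $(3)$ are exactly what make this comparison possible, while $(4)$ ensures that a closed orbit in the slice reflects genuine polystability rather than a jump of the stabilizer.
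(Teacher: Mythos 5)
First, a caveat on the comparison: the paper does not prove this proposition at all --- Section~\ref{sec:constr-moduli-space} explicitly reviews it as a result imported from \cite{Fan2020}, so there is no in-paper argument to measure yours against. Judged on its own terms, your construction of $\theta(v)=(A,\Phi)+v+f(v)$ via the Green's operator fixed-point equation, and the deduction of $(1)$--$(3)$ from holomorphy of the data, the quadratic vanishing of $f$, and $K$-equivariance plus analytic continuation to $K^\C$, is the standard Kuranishi argument and is essentially sound.

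The genuine gap is in $(5)$, and you have correctly located it yourself: the claim that $\theta$ carries $\nu_{0,\C}$-semistable loci compatibly with the \emph{real} moment map, i.e.\ that $\theta(\n^{-1}(0)\cap\Z)\subset\mu^{-1}(0)\cap\B$, is not something the construction gives you, and on the nose it is false in general. The correction term $f(v)$ is holomorphic and is built to control only $\mu_\C$; the real moment map $\mu$ is not a holomorphic quantity, and a Marle--Guillemin--Sternberg normal form for the $\G$-action would produce a \emph{different}, merely smooth, chart that need not agree with the holomorphic Kuranishi chart. So the step ``choose $x_0\in(K^\C x)\cap\nu^{-1}(0)$, then $\theta(x_0)$ solves Hitchin's equation'' does not go through. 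The workable route --- and the one consistent with how this paper later uses the proposition (cf.\ Proposition~\ref{sec:s-equiv-class-closedness-polystability} and Theorem~\ref{sec:pi-saturated-open-slice-satura}) --- is to translate ``closed $K^\C$-orbit'' into ``$\theta(x)\G^\C$ closed in the slice neighborhood'' via the homeomorphism $\Z K^\C\times_{K^\C}\G^\C\to U$, and then to use the Yang--Mills--Higgs retraction $r$: $r(\theta(x))$ is polystable, isomorphic to $Gr(\theta(x))$, and lies in the orbit closure, so closedness forces $\theta(x)\cong r(\theta(x))$. No exact real-moment-map matching is needed for that. Separately, in $(4)$ your appeal to the local slice theorem for the inclusion $(\G^\C)_{\theta(x)}\subseteq(K^\C)_x$ is circular as stated, since Proposition~\ref{sec:constr-moduli-space-local-slice} is itself established using the Kuranishi map and its stabilizer-preservation property; the reverse inclusion needs an independent argument (properness of the $\G$-action plus a limiting argument showing stabilizers of points near $(A,\Phi)$ are conjugate into $K^\C$, then the product structure of the slice to see they fix $x$).
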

Moreover, there is a local slice theorem for the $\G^\C$-action.
\begin{proposition}\label{sec:constr-moduli-space-local-slice}
  If $B$ is sufficiently small, the map $\Z
  K^\C\times_{K^\C}\G^\C\to\B^{ss}$ given by $[x,g]\mapsto\theta(x)g$ is a
  homeomorphism onto an open neighborhood of $(A,\Phi)$ in
  $\B^{ss}$.
\end{proposition}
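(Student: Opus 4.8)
### Proof proposal for Proposition~\ref{sec:constr-moduli-space-local-slice}

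The plan is to construct an explicit local inverse using the implicit function theorem (or inverse function theorem on the relevant Banach manifolds), in the spirit of the classical Kuranishi slice arguments for gauge theory. The target map $[x,g]\mapsto\theta(x)g$ is built from the Kuranishi map $\theta$ and the $\G^\C$-action, both of which are continuous in the $L^2_k$/$L^2_{k+1}$ topologies, so the map is at least continuous; openness of $\B^{ss}$ in $\B$ guarantees the codomain is the right one once $B$ is shrunk. The essential point is that $\bH^1\cong T_{(A,\Phi)}\Z$ is, by Proposition~\ref{sec:constr-moduli-space-kura-map-prop}(2), a complement to the tangent space of the $\G^\C$-orbit through $(A,\Phi)$, so the source space $\Z K^\C\times_{K^\C}\G^\C$ is modeled on a slice times the group, and the derivative of the map at the base point is a linear isomorphism onto $T_{(A,\Phi)}\B$.

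First I would set up the linear decomposition at $(A,\Phi)$. The tangent space $T_{(A,\Phi)}\sC$ splits, via the Kähler identities from the preceding proposition and Hodge theory for the elliptic complex $C_{\mu_\C}$, as the image of $D''$ (the infinitesimal $\G^\C$-orbit directions, i.e. $\im D''\cong\g^\C/\fk^\C$) plus the harmonic space $\bH^1$. Concretely, the infinitesimal action gives a map $\g^\C\to T_{(A,\Phi)}\sC$ with kernel $\fk^\C=\Lie(K^\C)$, and its image is transverse to $\bH^1$. The fiber product $\Z K^\C\times_{K^\C}\G^\C$ has tangent space at $[0,e]$ equal to $T_0(\Z K^\C)\oplus_{\fk^\C}\g^\C$, which maps isomorphically onto $\bH^1\oplus(\g^\C/\fk^\C)=T_{(A,\Phi)}\B$. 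This is the key infinitesimal computation; the stabilizer-preservation statement in Proposition~\ref{sec:constr-moduli-space-kura-map-prop}(4) is what makes the associated-bundle construction well-defined and ensures the fiber product is a genuine manifold (away from the stabilizer directions, which are quotiented out).

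Next I would invoke the inverse/implicit function theorem in the Banach setting to conclude that the map is a local homeomorphism near $[0,e]$, hence a homeomorphism onto an open neighborhood of $(A,\Phi)$ after shrinking $B$. Because $K$ is compact (the $\G$-action is proper), the associated bundle $\Z K^\C\times_{K^\C}\G^\C$ is well-behaved, and $K^\C$-equivariance of the extended $\theta$ from Proposition~\ref{sec:constr-moduli-space-kura-map-prop}(3) guarantees the map $[x,g]\mapsto\theta(x)g$ is well-defined on the quotient. Injectivity on a neighborhood should follow from the slice property: two points $\theta(x)g$ and $\theta(x')g'$ agreeing force $x,x'$ into the same $K^\C$-orbit and $g,g'$ to differ by a stabilizer element, using (4) again.

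The main obstacle I anticipate is that this is infinite-dimensional analysis, so the inverse function theorem does not apply off the shelf: one must check that the infinitesimal-action operator has closed image and that the splitting $T_{(A,\Phi)}\sC=\im D''\oplus\bH^1$ holds in the chosen Sobolev topology with a bounded projection. This is where the ellipticity of $C_{\mu_\C}$ and elliptic regularity are indispensable — they upgrade the formal Hodge decomposition to a topological direct sum in $L^2_k$ and provide the bounded right inverse of $D''$ needed to run the implicit function theorem. A secondary subtlety is the presence of the compact stabilizer $K$: the map is only a homeomorphism after passing to the $K^\C$-quotient in the source, so I would either work $K$-equivariantly throughout and descend, or restrict to a $K$-slice and handle the finite isotropy separately. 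I expect the cleanest route is to cite the analogous slice theorem already established in \cite{Fan2020} and verify that the semistable open condition does not disturb the local model, rather than rederiving the full Banach-manifold estimates here.
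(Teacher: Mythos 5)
The paper does not prove this proposition: Section~\ref{sec:constr-moduli-space} is explicitly a review of \cite{Fan2020}, and Proposition~\ref{sec:constr-moduli-space-local-slice} is quoted there without argument. Your closing recommendation --- cite the slice theorem already established in \cite{Fan2020} --- is therefore exactly what the paper does, and to that extent your proposal agrees with it.

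If, however, the inverse-function-theorem sketch preceding that citation were meant to stand on its own, it has a genuine gap. The map $[x,g]\mapsto\theta(x)g$ goes from $\Z K^\C\times_{K^\C}\G^\C$ to $\B^{ss}$, and neither side is a manifold: $\Z$ is a possibly singular complex space (locally $B\cap\nu_{0,\C}^{-1}(0)$), and $\B$ is the zero locus of $\bar{\partial}_A\Phi$, which need not be smooth at $(A,\Phi)$. So the assertion that ``the derivative at the base point is a linear isomorphism onto $T_{(A,\Phi)}\B$'' is not something the inverse function theorem can consume --- which is precisely why the proposition claims only a homeomorphism rather than a diffeomorphism. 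The standard repair is to run the Banach inverse function theorem on an \emph{ambient} smooth map into $\sC$ (built from the Kuranishi map on all of $BK^\C$ together with the group action), obtain a homeomorphism onto an open subset of $\sC$, and then verify that the preimage of $\B$ under this ambient map is exactly $\Z K^\C\times_{K^\C}\G^\C$. Relatedly, the splitting you write, $T_{(A,\Phi)}\sC=\im D''\oplus\bH^1$, is only the kernel of the second $D''$ in the deformation complex; the Hodge decomposition of the middle term has a third summand $\im(D'')^*$, and the ambient argument needs it (it is the direction in which the quadratic part of $\mu_\C$ is solved, i.e.\ where $\theta$ itself comes from). None of this contradicts your plan, but it means the inverse function theorem cannot be applied to the stated map even after the Sobolev bookkeeping you correctly flag, and the honest proof really does route through the ambient construction of \cite{Fan2020}.
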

It is shown that $\Z K^\C$ is a closed complex subspace of $BK^\C$. Moreover, it
admits an analytic Hilbert quotient (see \cite{Heinzner1998} and
\cite{Heinzner1994}) $\pi\colon\Z K^\C\to\Z K^\C\sslash K^\C$. In other
words, $\pi$ is a surjective $K^\C$-invariant map such that it is Stein in the
sense that preimages of Stein subspaces are Stein. Moreover, the structure sheaf
of $\Z K^\C\sslash K^\C$ is given by $\pi_*\sO_{\Z K^\C}^{K^\C}$. As a
topological space, $\Z K^\C\sslash K^\C$ is defined as a quotient space by the
equivalence relation that $x\sim y$ if the closures of $xK^\C$ and $yK^\C$
intersect. Then, it is shown that $\theta$ induces a well-defined map
$\varphi\colon\Z K^\C\sslash K^\C\to\M$ such that $\varphi[x]=[r\theta(x)]$ for
any $x\in\Z$. We also call $\varphi$ a Kuranishi map. Here,
$r\colon\B^{ss}\to\m^{-1}(0)$ is the retraction defined by the negative gradient
flow of the Yang-Mills-Higgs functional. By \cite[Theorem 1.4]{Wilkin2008},
$r(A,\Phi)$ is isomorphic to $Gr(A,\Phi)$, where $Gr(A,\Phi)$ is the graded
object associated with the Seshadri filtration of $(A,\Phi)$. Moreover,
$\varphi$ is a homeomorphism onto an open neighborhood of $[A,\Phi]$. Finally,
there is a unique structure of a normal complex space on $\B^{ps}/\G^\C$ such that
$\varphi$ is a biholomorphism onto its image.

More can be said about the singularities in the moduli space. Note that $\bH^1$
admits a linear hyperK\"ahler structure that is induced by the inclusion
$\bH^1\hookrightarrow T_{(A,\Phi)}\sC$. Let $\omega_0$ and $\omega_{0,\C}$
denote the K\"ahler form associated with the complex structure $I$ and the
complex symplectic form associated with the other complex structures,
respectively. The $\G^\C$-stabilizer $K^\C$ acts linearly on $\bH^1$ and
preserves $\omega_{0,\C}$. As a consequence, there is a canonical complex moment
map $\nu_{0,\C}\colon\bH^1\to\bH^2(C_{\mu_\C})$ given by
$\nu_{0,\C}(x)=\frac{1}{2}H[x,x]$, where
$H\colon\Omega^{1,1}(\g_E^\C)\to\bH^2(C_{\mu_\C})$ is the harmonic projection.
Then, $\nu_{0,\C}^{-1}(0)$, as an affine variety, admits a GIT quotient
$\pi\colon\nu_{0,\C}^{-1}(0)\to\nu_{0,\C}^{-1}(0)\sslash K^\C$. In fact, it can
be realized as a singular hyperK\"ahler quotient as follows. The $\G$-stabilizer
$K$ acts linearly on $\bH^1$ and preserves the K\"ahler form $\omega_0$. As a
consequence, the $K$-action on $\bH^1$ admits a unique moment map $\nu_0$ such
that $\nu_0(0)=0$. Then, $\n=(\nu_0,\nu_{0,\C})$ can be regarded as a
hyperK\"aher moment map. By \cite{Heinzner1994}, the inclusion
$\n^{-1}(0)\hookrightarrow\nu_{0,\C}^{-1}(0)$ induces a homeomorphism
$\n^{-1}(0)/K\xrightarrow{\sim}\nu_{0,\C}^{-1}(0)\sslash K^\C$. Finally, it can
be shown that $\Z=B\cap\nu_{0,C}^{-1}(0)$, and $\Z K^\C\sslash K^\C$ is an open
neighborhood of $[0]$ in $\nu_{0,C}^{-1}(0)\sslash K^\C$. In summary, $[A,\Phi]$
admits an open neighborhood that is biholomorphic to an open neighborhood of
$[0]$ in $\nu_{0,\C}^{-1}(0)\sslash K^\C$.

\subsection{The orbit type stratification}\label{sec:orbit-type-strat}
In this section, we first review the results in \cite{Fan2020a}, and then prove
some technical results that will be used later.

Let $K$ be a $\G$-stabilizer at some Higgs bundle in $\m^{-1}(0)$ and $(K)$ the
conjugacy class of $K$ in $\G$. The subspace
\begin{equation}
  \m^{-1}(0)_{(K)}=\{(A,\Phi)\in\m^{-1}(0)\colon \G_{(A,\Phi)}\in(H)\}
\end{equation}
is $\G$-invariant. The orbit type stratification of the singular hyperK\"ahler
quotient $\m^{-1}(0)/\G$ is defined as
\begin{equation}
  \m^{-1}(0)/\G=\coprod_{(H)}\text{ components of }\m^{-1}(0)_{(H)}/\G.
\end{equation}
Similarly, if $L$ is a $\G^\C$-stabilizer at some Higgs bundle in $\B^{ps}$, and
$(L)$ denotes the conjugacy class of $L$ in $\G^\C$, then the subspace
\begin{equation}
  \B^{ps}_{(L)}=\{(A,\Phi)\in\B^{ps}\colon (\G^\C)_{(A,\Phi)}\in(L)\}
\end{equation}
is $\G^\C$-invariant. The orbit type stratification of $\M$ is defined as
\begin{equation}
  \M=\coprod_{(L)}\text{ components of }\B^{ps}_{(L)}/\G^\C.
\end{equation}
It can be proved that the subgroup $L$ appearing in the orbit type
stratification of $\M$ is always equal to $K^\C$ for some compact subgroup $K$
of $\G$. Then, we have the following results.
\begin{proposition}\ \label{sec:orbit-type-strat-main-results}
  \begin{enumerate}
  \item Every stratum $Q$ in the orbit type stratification of $\m^{-1}(0)/\G$ is a
    locally closed smooth manifold, and $\pi^{-1}(Q)$ is a smooth submanifold of
    $\sC$ such that the restriction $\pi\colon\pi^{-1}(Q)\to Q$ is a smooth
    submersion. Moreover, the restriction of the hyperK\"ahler structure from
    $\sC$ to $\pi^{-1}(Q)$ descends to $Q$.
    
  \item Every stratum $Q$ in the orbit type stratification of $\M$ is a locally closed
    complex submanifold of $\M$, and $\pi^{-1}(Q)$ is a complex submanifold of
    $\sC$ with respect to the complex structure $I$ such that the restriction
    $\pi\colon\pi^{-1}(Q)\to Q$ is a holomorphic submersion. This decomposition is
    a complex Whitney stratification.
    
  \item If $Q$ is a stratum determined by the orbit type $(K^\C)$, and
    $[A,\Phi]\in Q$, then the tangent space of $Q$ at $[A,\Phi]$ can be
    identified with $(\bH^1)^{K^\C}$.
  \end{enumerate}
\end{proposition}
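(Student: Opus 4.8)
The plan is to reduce all three assertions to the finite-dimensional linear models supplied by the Kuranishi slice construction of Section~\ref{sec:constr-moduli-space}. Fix a point $(A,\Phi)\in\m^{-1}(0)$ with $\G$-stabilizer $K$, so that $K^\C$ is the $\G^\C$-stabilizer. By the two slice descriptions recalled there, a neighborhood of $[A,\Phi]$ in $\m^{-1}(0)/\G$ is homeomorphic to a neighborhood of $[0]$ in the linear hyperK\"ahler quotient $\n^{-1}(0)/K$, and a neighborhood of $[A,\Phi]$ in $\M$ is biholomorphic to a neighborhood of $[0]$ in the analytic Hilbert quotient $\nu_{0,\C}^{-1}(0)\sslash K^\C$. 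Because the Kuranishi map preserves stabilizers (Proposition~\ref{sec:constr-moduli-space-kura-map-prop}(4)), the orbit type strata are matched under these identifications, so it suffices to analyze the orbit type (Luna) stratification of the linear $K$- and $K^\C$-actions on $\bH^1$.

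For (1), the crucial observation is that the hyperK\"ahler moment map $\n$ vanishes on the linear subspace $(\bH^1)^K$: for $\xi\in\fk$ the fundamental vector field of $\xi$ is identically zero along the fixed-point set $(\bH^1)^K$, hence $d\langle\n,\xi\rangle$ vanishes there, and since $\n(0)=0$ each component $\langle\n,\xi\rangle$ is constant and equal to $0$. Thus $(\bH^1)^K\subseteq\n^{-1}(0)$ near $0$, and because $K$ fixes this subspace pointwise the orbit type stratum through $[0]$ is the open subset of $(\bH^1)^K$ where the stabilizer is exactly $K$; this is a locally closed smooth manifold. As the fixed-point set of a group acting by triholomorphic isometries, $(\bH^1)^K$ is a linear hyperK\"ahler subspace of $\bH^1$, and its restricted structure is the descended hyperK\"ahler structure on $Q$. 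Transporting through the slice, $\pi^{-1}(Q)$ is the saturation $\G\cdot\theta((\bH^1)^K)$, which the slice theorem presents as a smooth submanifold of $\sC$ on which $\pi$ restricts to a submersion onto $Q$ with fibres the $\G$-orbits. This is the linear instance of the Sjamaar--Lerman and Dancer--Swann stratification theorems.

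For (2) and (3), the identical fixed-point argument applied to the holomorphic symplectic moment map gives $\nu_{0,\C}\equiv0$ on the complex linear subspace $(\bH^1)^{K^\C}$, so $(\bH^1)^{K^\C}\subseteq\nu_{0,\C}^{-1}(0)$ and, as $K^\C$ acts trivially there, the analytic Hilbert quotient restricts to the identity on $(\bH^1)^{K^\C}$. Hence the stratum $Q$ of type $(K^\C)$ through $[0]$ is an open subset of the complex linear space $(\bH^1)^{K^\C}$---a locally closed complex submanifold of $\M$ whose tangent space at $[A,\Phi]$ is canonically $(\bH^1)^{K^\C}$, which is assertion (3). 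Likewise $\pi^{-1}(Q)=\G^\C\cdot\theta((\bH^1)^{K^\C})$ is an $I$-complex submanifold of $\sC$ and $\pi$ restricts to a holomorphic submersion onto $Q$ with fibres the (closed, polystable) $\G^\C$-orbits. There remains the Whitney property: I would derive it from the local triviality of the stratification along $Q$, the slice theorem exhibiting a neighborhood of a point of $Q$ as a product of $Q$ with a transverse complex GIT cone, together with the fact that the Luna stratification of a reductive quotient is a complex Whitney stratification.

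I expect Whitney regularity in (2) to be the main obstacle. The smoothness of the individual strata, their (complex) submanifold structure, and the submersion statements all follow quickly once $\n$ (respectively $\nu_{0,\C}$) is seen to vanish on the relevant fixed-point subspace; by contrast, Whitney's conditions (a) and (b) require genuine control of the way the lower strata limit onto $Q$. The delicate points are to establish the local product structure transverse to $Q$ rigorously from the slice theorem and then to transport these finite-dimensional statements back through the Kuranishi identification, checking along the way that the infinite-dimensional saturations $\G\cdot\theta(-)$ and $\G^\C\cdot\theta(-)$ are submanifolds of $\sC$ with the expected normal geometry.
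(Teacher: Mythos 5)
This proposition is stated in the paper without proof: Section~\ref{sec:orbit-type-strat} explicitly presents it as a review of results from \cite{Fan2020a}, with the finite-dimensional statements on the linear models $\n^{-1}(0)/K$ and $\nu_{0,\C}^{-1}(0)\sslash K^\C$ attributed to \cite{Mayrand2018}. Your reduction --- Kuranishi slice to the linear $K$- and $K^\C$-actions on $\bH^1$, the moment maps vanishing on the fixed subspaces so that the stratum through $[0]$ is $(\bH^1)^{K^\C}$ (exactly the identification the paper itself uses in the proof of Proposition~\ref{sec:orbit-type-strat-lower-dim}), and then finite-dimensional stratification theory for the Whitney conditions --- is precisely the route the paper attributes to those references, and the gaps you flag (Whitney regularity, the submanifold structure of the saturations $\G^\C\cdot\theta((\bH^1)^{K^\C})$ in $\sC$) are exactly the content delegated to \cite{Fan2020a} and \cite{Mayrand2018}.
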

Moreover, the Hitchin-Kobayashi correspondence
$i\colon\m^{-1}(0)/\G\xrightarrow{\sim}\B^{ps}/\G^\C$ preserves the
stratifications in the following sense.
\begin{proposition}\label{sec:orbit-type-strat-Kob-Hit-preser}
  If $Q$ is a stratum  in the orbit type stratification of $\m^{-1}(0)/\G$, then
  $i(Q)$ is a stratum in the orbit type stratification of $\M$, and the
  restriction $i\colon Q\to i(Q)$ is a biholomorphism with respect to the
  complex structure $I_Q$ on $Q$ coming from $\sC$ and the natural complex
  structure on $i(Q)$.
\end{proposition}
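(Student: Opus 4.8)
The plan is to show that $i$ restricts to a homeomorphism from the orbit type locus of type $(K)$ in $\m^{-1}(0)/\G$ onto the locus of type $(K^\C)$ in $\M$, and that this restriction is holomorphic for the stratum-wise complex structures; since $i$ is already a homeomorphism of the ambient spaces, it will then carry connected components to connected components, i.e. strata to strata. One direction of the set-theoretic half is immediate. Recall from Section~\ref{sec:constr-moduli-space} that for any $(A,\Phi)\in\m^{-1}(0)$ the $\G^\C$-stabilizer is the complexification of the $\G$-stabilizer, $(\G^\C)_{(A,\Phi)}=(\G_{(A,\Phi)})^\C$. Since complexification commutes with conjugation by elements of $\G$, a point whose $\G$-stabilizer lies in $(K)$ is sent by $i$ (which is induced by the inclusion $\m^{-1}(0)\hookrightarrow\B^{ps}$) to a point whose $\G^\C$-stabilizer lies in $(K^\C)$. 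Hence $i\big(\m^{-1}(0)_{(K)}/\G\big)\subseteq\B^{ps}_{(K^\C)}/\G^\C$.

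For the reverse inclusion I must check that $i^{-1}\big(\B^{ps}_{(K^\C)}/\G^\C\big)$ meets no other orbit type locus, which is the technical crux. Given $[B,\Psi]\in\B^{ps}_{(K^\C)}/\G^\C$, its preimage is $[r(B,\Psi)]$, where $r(B,\Psi)\in\m^{-1}(0)$ lies in the same $\G^\C$-orbit; writing $K'$ for its $\G$-stabilizer, one has $(K')^\C\in(K^\C)$, so it suffices to prove that $(K')^\C$ conjugate to $K^\C$ in $\G^\C$ forces $K'$ conjugate to $K$ in $\G$. Choose $g\in\G^\C$ with $g(K')^\C g^{-1}=K^\C$; as $K$ and $gK'g^{-1}$ are maximal compact subgroups of the reductive group $K^\C$, after modifying $g$ by an element of $K^\C$ we may assume $gK'g^{-1}=K$. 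Now invoke the polar decomposition $g=\gamma\,e^{s}$ of the complex gauge group, with $\gamma\in\G$ and $s$ a Hermitian endomorphism, so that $e^{s}K'e^{-s}=\gamma^{-1}K\gamma\subseteq\G$. Applying the Cartan involution, whose fixed-point set is $\G$ and which sends $e^{s}$ to $e^{-s}$, one finds that every element of $K'$ commutes with $e^{2s}$, hence with $s$, so $e^{s}$ centralizes $K'$ and $\gamma^{-1}K\gamma=K'$. Thus $i^{-1}\big(\B^{ps}_{(K^\C)}/\G^\C\big)=\m^{-1}(0)_{(K)}/\G$, and $i$ maps each orbit type locus homeomorphically onto the corresponding one; a connected component $Q$ is therefore carried onto a connected component $i(Q)$ of $\B^{ps}_{(K^\C)}/\G^\C$, which is a stratum of $\M$. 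This stabilizer-matching is the main obstacle; everything else is local.

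It remains to check that $i\colon Q\to i(Q)$ is biholomorphic, and here I reduce to the local models of Section~\ref{sec:constr-moduli-space}. Fix $[x]\in Q$ with representative $x\in\m^{-1}(0)$ and $\G$-stabilizer $K$. Near $[x]$ the quotient $\m^{-1}(0)/\G$ is modeled on $\n^{-1}(0)/K$ and $\M$ is modeled on $\nu_{0,\C}^{-1}(0)\sslash K^\C$, and by the construction of these charts $i$ becomes, under these identifications, the linear Hitchin--Kobayashi homeomorphism $\n^{-1}(0)/K\xrightarrow{\sim}\nu_{0,\C}^{-1}(0)\sslash K^\C$ recalled in the excerpt. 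Near $[x]$ the stratum $Q$ is the image of the fixed-point set $(\bH^1)^{K}$ and $i(Q)$ is the image of $(\bH^1)^{K^\C}$. Because $K^\C$ acts on $\bH^1$ by complex-linear transformations, $(\bH^1)^{K}$ is a complex subspace on which the complexified Lie algebra acts through the same vanishing operators as $\fk$, whence $(\bH^1)^{K}=(\bH^1)^{K^\C}$; on this common subspace all moment maps vanish and each orbit is a single point, so the linear Hitchin--Kobayashi map is the identity of $(\bH^1)^{K^\C}$. Consequently $i$ is, in these charts, the identity of an open subset of the complex vector space $(\bH^1)^{K^\C}$, whose complex structure is induced by $I$ on $\bH^1\subseteq T_{(A,\Phi)}\sC$; since this is precisely the complex structure $I_Q$ on $Q$ and the natural one on $i(Q)$ by Proposition~\ref{sec:orbit-type-strat-main-results}, the map $i\colon Q\to i(Q)$ is biholomorphic near $[x]$. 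As $[x]$ was arbitrary, $i\colon Q\to i(Q)$ is a biholomorphism.
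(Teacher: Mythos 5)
The paper itself does not prove this proposition: it is stated in Section~\ref{sec:orbit-type-strat} as part of the review of \cite{Fan2020a}, so there is no in-paper argument to measure yours against. Judged on its own terms, your reconstruction is essentially correct, and you have located the real crux: showing that conjugacy of $(K')^\C$ and $K^\C$ in $\G^\C$ forces conjugacy of $K'$ and $K$ in $\G$. That step is sound as written. Two remarks: the conjugation of the two maximal compact subgroups inside $K^\C$ uses that $K^\C$ is connected, which is exactly what Proposition~\ref{sec:orbit-type-strat-finitely-many}(1) supplies (the stabilizer is $\prod_i GL(m_i,\C)$, with maximal compact $\prod_i U(m_i)$); and the polar-decomposition/Cartan-involution step (every $u\in K'$ commutes with $e^{2s}$, hence with $s$, pointwise on the bundle) is the standard spectral argument and goes through for Sobolev gauge transformations. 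The same connectedness is what makes your identity $(\bH^1)^{K}=(\bH^1)^{K^\C}$ valid, consistently with Proposition~\ref{sec:orbit-type-strat-main-results}(3).

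The one place where the argument is thinner than it reads is the sentence ``by the construction of these charts $i$ becomes, under these identifications, the linear Hitchin--Kobayashi homeomorphism.'' The paper recalls a chart $\varphi\colon\Z K^\C\sslash K^\C\to\M$ with $\varphi[x]=[r\theta(x)]$ and a homeomorphism $\n^{-1}(0)/K\to\nu_{0,\C}^{-1}(0)\sslash K^\C$, but the compatibility you invoke --- that a chart for $\m^{-1}(0)/\G$ modeled on $\n^{-1}(0)/K$ intertwines $i$ with the linear correspondence --- is itself a nontrivial assertion (it requires comparing the Yang--Mills--Higgs retraction hidden in $\varphi$ with a corresponding retraction onto $\m^{-1}(0)$, not merely the inclusion), and it is precisely the sort of statement for which the paper defers to \cite{Fan2020a} and \cite{Mayrand2018}. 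Granting it, the remainder of your local computation --- the two fixed-point sets coincide, the moment maps vanish there, each point is its own closed orbit, so $i$ is the identity of an open subset of $(\bH^1)^{K^\C}$ carrying the $I$-induced complex structure --- is correct and does yield the biholomorphism. In short: no step is wrong, but that one load-bearing compatibility should be proved or explicitly cited rather than asserted.
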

Finally, Kuranishi maps preserve the stratifications in the following sense. Let
$[A,\Phi]\in\M$ such that $(A,\Phi)\in\m^{-1}(0)$. From
Section~\ref{sec:constr-moduli-space}, we see that the Kuranishi map
$\theta\colon\Z\to\B^{ss}$ induces a biholomorphism
\begin{equation}
  \varphi\colon\Z K^\C\sslash K^\C\to U\subset\M
\end{equation}
onto an open neighborhood $U$ of $[A,\Phi]$. Moreover, $\tilde{U}=\Z K^\C\sslash
K^\C$ is an open neighborhood of $[0]$ in $\nu_{0,\C}^{-1}(0)\sslash K^\C$. Now,
we can stratify $\nu_{0,\C}^{-1}(0)\sslash K^\C$ by $K^\C$-orbit types, since
there is a homeomorphism
$\nu_{0,\C}^{-1}(0)^{ps}/K^\C\to\nu_{0,\C}^{-1}(0)\sslash K^\C$. Here,
$\nu_{0,\C}^{-1}(0)^{ps}$ is the subspace of $\nu_{0,\C}^{-1}(0)$ consisting of
polystable points with respect to the $K^\C$-action. Similarly, we may stratify
the singular hyperK\"ahler quotient $\n^{-1}(0)/K$ by $K$-orbit types. By
\cite{Mayrand2018}, we obtain similar results for the stratifications on
$\nu_{0,\C}^{-1}(0)\sslash K^\C$ and $\n^{-1}(0)/K$ by replacing $\M$,
$\m^{-1}(0)/\G^\C$ and $i$ in
Proposition~\ref{sec:orbit-type-strat-main-results} and
\ref{sec:orbit-type-strat-Kob-Hit-preser} by $\nu_{0,\C}^{-1}(0)^{ps}/K^\C$,
$\n^{-1}(0)/K$ and $\n^{-1}(0)/K\xrightarrow{\sim}\nu_{0,\C}^{-1}(0)^{ps}/K^\C$,
respectively. Then, it is shown that the biholomorphism $\varphi\colon
\tilde{U}\to U$ preserves the induced stratifications on $\tilde{U}$ and $U$.
(Here, we may need to refine the induced stratifications into connected
components if strata are not connected.)

Now, we start to prove some technical results that will be used later.
\begin{proposition}\ \label{sec:orbit-type-strat-finitely-many}
  \begin{enumerate}
  \item Every $\G^\C$-stabilizer of a polystable Higgs bundle is connected.
    
  \item There are finitely many strata in $\M$.
  \end{enumerate}
\end{proposition}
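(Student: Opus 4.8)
The plan is to prove the two assertions in turn, deducing the finiteness of strata in part (2) from the explicit description of the stabilizers obtained in part (1).

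For part (1), I would begin from the defining property of the $\G^\C$-stabilizer at a point $(A,\Phi)\in\B^{ps}$: a complex gauge transformation $g$ fixes $(\bar\partial_A,\Phi)$ exactly when $g$ is a holomorphic automorphism of $\E$ commuting with $\Phi$, so that $\G^\C_{(A,\Phi)}$ is the unit group of the endomorphism algebra $\End_{\mathrm{Higgs}}(\E,\Phi)$. Writing the polystable Higgs bundle as $(\E,\Phi)=\bigoplus_{i=1}^s(\E_i,\Phi_i)\otimes\C^{m_i}$ with the $(\E_i,\Phi_i)$ stable of slope $\mu(\E)$ and pairwise non-isomorphic, I would invoke the simplicity of stable Higgs bundles: a nonzero Higgs morphism between stable Higgs bundles of equal slope is an isomorphism, so $\Hom_{\mathrm{Higgs}}((\E_i,\Phi_i),(\E_j,\Phi_j))$ is $\C$ if $i=j$ and $0$ otherwise. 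Hence $\End_{\mathrm{Higgs}}(\E,\Phi)\cong\prod_{i=1}^s M_{m_i}(\C)$, whose unit group is $\prod_{i=1}^s GL(m_i,\C)$. As each $GL(m_i,\C)$ is connected, the stabilizer is connected, which proves (1) and identifies $K^\C\cong\prod_i GL(m_i,\C)$ and $K\cong\prod_i U(m_i)$.

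For part (2) I would first bound the number of orbit types. By (1), every subgroup $L=K^\C$ occurring in the stratification is, for a suitable splitting, the image of $\prod_i GL(m_i,\C)$ acting block-diagonally on a smooth decomposition $E\cong\bigoplus_i E_i\otimes\C^{m_i}$, where $E_i$ is the rank-$r_i$ smooth bundle underlying $\E_i$, necessarily of degree $0$ since $\mu(\E_i)=\mu(\E)=0$. Because smooth bundles on $M$ of fixed rank and degree are isomorphic, the conjugacy class $(L)$ in $\G^\C$ depends only on the multiset $\{(r_i,m_i)\}$, subject to the single constraint $\sum_i r_i m_i=\rank E$. There are finitely many such multisets, hence finitely many conjugacy classes $(L)$.

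It remains to see that each $\B^{ps}_{(L)}/\G^\C$ has finitely many connected components. Fixing the type $\{(r_i,m_i)\}$, sending a polystable Higgs bundle to the unordered collection of its stable summands identifies $\B^{ps}_{(L)}/\G^\C$ with a configuration-type space assembled from the moduli spaces $\M^s(r_i)$ of stable Higgs bundles of rank $r_i$ and degree $0$: for each rank value one takes the relevant product of these moduli spaces with diagonals removed, so that distinct summands stay non-isomorphic, and then quotients by the symmetric groups permuting summands of the same type. Since deleting a complex-codimension-one diagonal from a product of positive-dimensional complex manifolds and passing to a finite quotient preserves connectedness, the number of components of $\B^{ps}_{(L)}/\G^\C$ is governed by the number of components of the $\M^s(r_i)$, and combined with the previous paragraph this yields finitely many strata in total. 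The main obstacle is exactly the connectedness of the moduli spaces $\M^s(r_i)$, which I would supply either from the known connectedness of Higgs bundle moduli on a curve of genus $\geq2$ or, staying internal to this framework, by induction on the rank, the rank-$1$ case being $\mathrm{Pic}^0(M)\times\C^g$. As a consistency check on local finiteness, each point of $\M$ has a Kuranishi neighborhood modeled on $\nu_{0,\C}^{-1}(0)\sslash K^\C$, and a linear reductive action on the finite-dimensional space $\bH^1$ has only finitely many orbit types, so that finitely many strata meet any given point, in agreement with the global count.
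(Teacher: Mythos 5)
Your proof of part (1) is exactly the paper's: decompose the polystable bundle into pairwise non-isomorphic stable summands with multiplicities, use that morphisms between stable Higgs bundles of equal slope are zero or isomorphisms and that endomorphisms of a stable Higgs bundle are scalars, and read off $\G^\C_{(A,\Phi)}\cong\prod_iGL(m_i,\C)$. For part (2) the paper stops essentially at that formula: it deduces finiteness of strata directly from the finiteness of possible stabilizer types, saying nothing about why each orbit-type piece $\B^{ps}_{(L)}/\G^\C$ has only finitely many connected components, even though the strata are defined as those components. You go further and confront that issue head-on, first pinning down that the conjugacy class $(L)$ depends only on the multiset $\{(r_i,m_i)\}$ with $\sum r_im_i=\rank E$, and then arguing that each $\B^{ps}_{(L)}/\G^\C$ is connected by realizing it as a quotient of a product of the stable moduli spaces $\M^s(r_i)$ minus a proper analytic "diagonal". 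That second layer is a genuine addition and, in my view, it is needed for the statement as literally formulated; its cost is that it imports the connectedness of $\M^s(r)$ for all $r\le\rank E$, which is a true but nontrivial theorem that the paper never invokes (your suggested "induction on the rank" is not developed and is not obviously workable, so you are really relying on the known external result). One small caution: to transfer connectedness of the configuration space to $\B^{ps}_{(L)}/\G^\C$ you should exhibit a continuous surjection, e.g.\ the direct-sum map from the product of the configuration spaces of stable objects, rather than only a set-theoretic bijection. Your closing local-finiteness check via the Kuranishi model is consistent but, as you implicitly recognize, local finiteness alone does not yield global finiteness, so it cannot replace the global component count.
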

\begin{proof}
  Let $(A,\Phi)$ be a polystable Higgs bundle. By definition, we may write
  \begin{equation}
    (\E_A,\Phi)=(\E_1,\Phi_1)^{\oplus m_1}\oplus\cdots\oplus(\E_r,\Phi_r)^{\oplus m_r},\qquad m_i\geq0,
  \end{equation}
  where $(\E_1,\Phi_1),\cdots,(\E_r,\Phi_r)$ are pairwise non-isomorphic stable
  Higgs bundles that have the same slope as $(\E_A,\Phi)$, and $(\E_A,\Phi)$ is
  the Higgs bundle determined by $(A,\Phi)$. As a consequence,
  \begin{equation}
    (\G^\C)_{(A,\Phi)}=\prod_{i=1}^rGL(m_i,\C).
  \end{equation}
  This proves $(1)$ and $(2)$. Here, we have used the fact that if $f$ is a
  morphism between two stable Higgs bundles of the same slope, then either
  $f\equiv0$ or $f$ is an isomorphism. Moreover, every endomorphism of a stable Higgs
  bundle must be a scalar.
\end{proof}
\begin{proposition}\label{sec:orbit-type-strat-lower-dim}
  Let $Q_1$ and $Q_2$ be two strata in $\M$. If $Q_1\subset\bar{Q_2}$, then
  $\dim Q_2>\dim Q_1$.
\end{proposition}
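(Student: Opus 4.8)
The plan is to deduce the strict inequality from the frontier condition, using the analyticity that comes with the complex Whitney stratification in Proposition~\ref{sec:orbit-type-strat-main-results}(2). First note that we must have $Q_1\neq Q_2$, for otherwise the claimed strict inequality cannot hold; since distinct strata are disjoint, the hypothesis $Q_1\subset\bar{Q_2}$ then gives $Q_1\subseteq\bar{Q_2}\setminus Q_2$. The stratum $Q_2$ is locally closed, hence open in its closure $\bar{Q_2}$ and dense there; and being a connected complex manifold it is irreducible. Consequently, once we know that $\bar{Q_2}$ is a complex analytic subset of $\M$, it is automatically irreducible of pure complex dimension $\dim Q_2$, and $\bar{Q_2}\setminus Q_2$ is a proper closed analytic subset, so $\dim(\bar{Q_2}\setminus Q_2)<\dim Q_2$. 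Because $Q_1\subseteq\bar{Q_2}\setminus Q_2$, this immediately yields $\dim Q_1<\dim Q_2$.

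The one substantive point---and the step I expect to be the main obstacle---is the analyticity of $\bar{Q_2}$, which I would verify locally using the Kuranishi models of Section~\ref{sec:constr-moduli-space}. Fix $p\in Q_1$ with $\G$-stabilizer $K$. By the Kuranishi slice construction there is a neighborhood $U$ of $p$ in $\M$ and a biholomorphism $\varphi$ onto a neighborhood of $[0]$ in the affine quotient $\nu_{0,\C}^{-1}(0)\sslash K^\C$, and $\varphi$ carries the orbit type stratification of $U$ to the stratification of $\nu_{0,\C}^{-1}(0)\sslash K^\C$ by $K^\C$-orbit types. The latter is the Luna stratification of an affine GIT quotient of a reductive group, whose strata are locally closed subvarieties with the property that the closure of each stratum is Zariski closed, being a union of strata indexed by larger stabilizer types. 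Hence $\varphi^{-1}(\bar{Q_2}\cap U)$ is a subvariety and $\bar{Q_2}\cap U$ is analytic; since $p\in Q_1\subset\bar{Q_2}$ was arbitrary and analyticity is a local condition, $\bar{Q_2}$ is a complex analytic subset of $\M$, which is what the first paragraph needs.

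For completeness I would also record the more hands-on comparison suggested by the dimension formula $\dim Q=\dim_\C(\bH^1)^{K^\C}$ of Proposition~\ref{sec:orbit-type-strat-main-results}(3). In the local model a point $q\in Q_2\cap U$ corresponds to a polystable $y\in\nu_{0,\C}^{-1}(0)$ whose stabilizer $K'=(K^\C)_y$ is a proper reductive subgroup of $K^\C$; since $q\notin Q_1$ the vector $y$ lies in $(\bH^1)^{K'}$ but not in $(\bH^1)^{K^\C}$, so $(\bH^1)^{K^\C}\subsetneq(\bH^1)^{K'}$. This confirms that the stabilizer genuinely drops and the fixed space genuinely grows, but turning it into $\dim Q_1<\dim Q_2$ would require identifying $\dim Q_2$ with $\dim_\C(\bH^1)^{K'}$ via the symplectic slice at $y$, i.e.\ relating the harmonic space at $q$ to $\bH^1$ through the slice theorem. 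That bookkeeping is exactly what the frontier argument avoids, so I would keep the analyticity argument as the main line and treat this computation only as a consistency check.
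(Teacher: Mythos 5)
Your overall strategy is genuinely different from the paper's, and you have correctly located the crux: everything reduces to knowing that $\bar{Q_2}$ (or a suitable analytic set containing it, of the right dimension, in which $Q_2$ is dense) is complex analytic near a point of $Q_1$. You are also right not to quote Proposition~\ref{sec:orbit-type-strat-closure-stratum} here, since in the paper that result is deduced \emph{from} the present proposition via Remmert--Stein, so the logical order is the reverse of yours. The gap is in the sentence ``Hence $\varphi^{-1}(\bar{Q_2}\cap U)$ is a subvariety.'' Luna theory gives you that the full orbit-type set $S_{(L)}$ is a locally closed subvariety of $\nu_{0,\C}^{-1}(0)\sslash K^\C$, so $\bar{S_{(L)}}$ is Zariski closed; but $Q_2$ is by definition a \emph{connected component} of an orbit-type set, and after transporting through the Kuranishi chart, $\varphi^{-1}(Q_2\cap U)$ is only a union of connected components of $S_{(L)}\cap\tilde{U}$ for a Euclidean-open $\tilde{U}$. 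The closure of such a union of Euclidean components need not be a subvariety, and if you enlarge to $\bar{S_{(L)}}\cap\tilde{U}$ you lose two things your first paragraph needs: that the local dimension at $p$ is exactly $\dim Q_2$ (other components of $S_{(L)}$ of a priori different dimension may also pass through $[0]$), and that $Q_2$ itself --- not just $S_{(L)}$ --- is dense in the irreducible component containing the germ of $Q_1$, which is what rules out $\dim Q_1=\dim Q_2$. Without that density, the case $d_1=d_2$ is not excluded: the germ of $Q_1$ at $p$ could simply be an irreducible component of the ambient analytic set, with $Q_2$ accumulating at $p$ only through other components.

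These defects can be repaired, but the natural repair re-introduces exactly the structure the paper exploits. In the local model one has $\nu_{0,\C}^{-1}(0)=(\nu_{0,\C}|_F)^{-1}(0)\times(\bH^1)^{K^\C}$ with $\bH^1=F\oplus(\bH^1)^{K^\C}$, so every orbit-type piece splits as $(\text{something in the }F\text{-factor})\times(\bH^1)^{K^\C}$, the stratum through $[0]$ is exactly $(\bH^1)^{K^\C}$, and the inequality $\dim Q_2>\dim Q_1$ is equivalent to the $F$-factor of (the component meeting) $Q_2$ having positive dimension. The paper proves this positivity directly and without any appeal to algebraicity of the strata: if a component of the $F$-factor were a single $K^\C$-orbit $xK^\C$ with $x\neq0$, the Kempf--Ness minimal vector argument shows the points $[tx]$, $t\in(0,1]$, are pairwise distinct and all lie in that component, a contradiction. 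That scaling argument is precisely the self-contained substitute for the analyticity-plus-density input your route requires, so you should either import the product decomposition to close your frontier argument (at which point you are essentially reproving the paper's lemma) or adopt the scaling argument outright.
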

\begin{proof}
  Since Kuranishi maps preserve the orbit type stratifications, this problem can
  be transferred to $\nu_{0,\C}^{-1}(0)\sslash K^\C$. Write
  \begin{equation}
    \bH^1=F\oplus(\bH^1)^{K^\C},
  \end{equation}
  where $F$ is the $\omega_{0,\C}$-orthogonal complement of $(\bH^1)^{K^\C}$. By
  definition of $\nu_{0,\C}$,
  \begin{equation}
    \nu_{0,\C}^{-1}(0)=(\nu_{0,\C}|_F)^{-1}(0)\times(\bH^1)^{K^\C}
  \end{equation}
  so that
  \begin{equation}
    \nu_{0,\C}^{-1}(0)\sslash K^\C=(\nu_{0,\C}|_F)^{-1}(0)\sslash K^\C\times(\bH^1)^{K^\C}.
  \end{equation}
  Therefore, it is clear that the unique stratum containing $[0]$ is
  $(\bH^1)^{K^\C}$. If $L$ is a proper subgroup of $K^\C$, then
  \begin{equation}
    (\nu_{0,\C}^{-1}(0)\sslash K^\C)_{(L)}=((\nu_{0,\C}|_F)^{-1}(0)\sslash K^\C)_{(L)}\times(\bH^1)^{K^\C},
  \end{equation}
  where the subscript $(L)$ denote the orbit type stratum determined by $(L)$.
  As a consequence,
  \begin{equation}
    \dim(\nu_{0,\C}^{-1}(0)\sslash K^\C)_{(L)}=\dim((\nu_{0,\C}|_F)^{-1}(0)\sslash K^\C)_{(L)}+\dim(\bH^1)^{K^\C}.
  \end{equation}

  Now, we claim that if $F\neq0$ and $((\nu_{0,\C}|_F)^{-1}(0)\sslash
  K^\C)_{(L)}\neq\emptyset$, then
  \begin{equation}
    \dim((\nu_{0,\C}|_F)^{-1}(0)\sslash
  K^\C)_{(L)}>0.
  \end{equation}
  Suppose that this dimension is 0 and pick a connected component $Q$. Hence,
  $Q$ is a singleton, and its preimage in $(\nu_{0,\C}|_F)^{-1}(0)^{ps}$ is a
  single $K^\C$-orbit $xK^\C$ for some $x\neq0$. By Kempf-Ness theorem, the
  restriction of the $L^2$-norm $\|\cdot\|_{L^2}$ to the orbit $xK^\C$ attains a
  minimum value $r>0$. Therefore, we may assume that $\|x\|_{L^2}=r$. Now, we
  show that if $t_1x$ and $t_2x$ are in the same $K^\C$-orbit for some $t_1,t_2>0$, then $t_1=t_2$.
  In fact, if $t_1x=gt_2x$ for some $g\in K^\C$, then $t_1r=t_2\|gx\|_{L^2}\geq
  t_2r$ so that $t_1\geq t_2$. Applying the same argument to $g^{-1}t_1x=t_2x$,
  we obtain that $t_1\leq t_2$. Since the $K^\C$-action is linear, $tx$ is also
  polystable and has the same orbit type of $x$ for every $t\in(0,1]$.
  Therefore, $Q$ contains a subspace $\{[tx]\colon t\in(0,1]\}$, which is a
  contradiction.
\end{proof}
Using Proposition~\ref{sec:orbit-type-strat-lower-dim}, we can show that
closures of strata are closed complex subspaces of $\M$.
\begin{proposition}\label{sec:orbit-type-strat-closure-stratum}
  If $Q$ is a stratum in $\M$, then $\bar{Q}$ is a closed complex subspace of
  $\M$, and $\dim_x\bar{Q}=\dim Q$ for every $x\in\bar{Q}$.
\end{proposition}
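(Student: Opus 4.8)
The plan is to prove both assertions simultaneously by induction on $\dim Q$, using Proposition~\ref{sec:orbit-type-strat-lower-dim} together with the Remmert--Stein extension theorem. Throughout I would use that the orbit type decomposition is a complex Whitney stratification with finitely many strata (Propositions~\ref{sec:orbit-type-strat-main-results} and \ref{sec:orbit-type-strat-finitely-many}); in particular it satisfies the condition of the frontier, so that $\bar{Q}$ is a union of strata and $\bar{Q}\setminus Q$ is a finite union of strata $Q_i\neq Q$ with $Q_i\subset\bar{Q}$. By Proposition~\ref{sec:orbit-type-strat-lower-dim} each such $Q_i$ satisfies $\dim Q_i<\dim Q$.

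First I would set $A=\bar{Q}\setminus Q$ and note that $A$ is closed in $\M$, since $Q$ is locally closed and hence open in $\bar{Q}$. I claim $A=\bigcup_i\bar{Q_i}$, the union over the finitely many strata $Q_i\subset A$. Indeed $\bar{Q_i}\subset\bar{Q}$, and $\bar{Q_i}\cap Q=\emptyset$: otherwise the frontier condition would force $Q\subset\bar{Q_i}$, and Proposition~\ref{sec:orbit-type-strat-lower-dim} would give $\dim Q_i>\dim Q$, contradicting $\dim Q_i<\dim Q$. Hence $\bar{Q_i}\subset A$, while the reverse inclusion $A=\bigcup_i Q_i\subset\bigcup_i\bar{Q_i}$ is clear. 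By the inductive hypothesis (available because $\dim Q_i<\dim Q$) each $\bar{Q_i}$ is a closed complex subspace of dimension $\dim Q_i$, so $A$ is a closed complex subspace with $\dim A<\dim Q$. The base case, in which $A=\emptyset$ and $\bar{Q}=Q$ is already a closed submanifold, is subsumed here.

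Now $Q=\bar{Q}\cap(\M\setminus A)$ is a closed complex submanifold of the open set $\M\setminus A$, of pure dimension $\dim Q$, since strata are connected. As $\dim A<\dim Q$, the Remmert--Stein theorem applies to the pure-dimensional analytic subset $Q\subset\M\setminus A$ and the analytic subset $A\subset\M$, and shows that its closure $\bar{Q}$ is a closed complex subspace of $\M$. This establishes the first assertion, and is, I expect, the crux of the argument: the topological closure carries no analytic structure a priori, and it is exactly the strict drop in dimension furnished by Proposition~\ref{sec:orbit-type-strat-lower-dim} that allows Remmert--Stein to upgrade the inductively analytic frontier $A$ to analyticity of $\bar{Q}$.

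For the dimension statement I would argue that $Q$, being a connected complex manifold, is irreducible, and therefore so is its closure $\bar{Q}$. An irreducible reduced complex space has pure dimension, so $\dim_x\bar{Q}=\dim\bar{Q}$ for every $x\in\bar{Q}$; evaluating at a point of the open dense submanifold $Q$ gives $\dim\bar{Q}=\dim Q$, whence $\dim_x\bar{Q}=\dim Q$ for all $x\in\bar{Q}$. The one point requiring care is the frontier condition, which I would either cite as a standard consequence of the Whitney conditions for a locally finite stratification, or verify locally through the Kuranishi model $\nu_{0,\C}^{-1}(0)\sslash K^\C$, where the strata are the Luna strata of a reductive quotient and the condition of the frontier is classical.
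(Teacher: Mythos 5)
Your proof is correct, and its main engine is the same as the paper's: induction on the dimension of the stratum, Proposition~\ref{sec:orbit-type-strat-lower-dim} to force $\dim(\bar{Q}\setminus Q)<\dim Q$, and the Remmert--Stein theorem to conclude analyticity of $\bar{Q}$. You are in fact more careful than the paper on one point: the paper simply writes $\partial Q=Q_{l_1}\cup\cdots\cup Q_{l_k}=\bar{Q_{l_1}}\cup\cdots\cup\bar{Q_{l_k}}$ without comment, whereas you justify both equalities via the condition of the frontier (and the observation that $\bar{Q_i}\cap Q=\emptyset$, since otherwise Proposition~\ref{sec:orbit-type-strat-lower-dim} would force $\dim Q_i>\dim Q$). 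Where you genuinely diverge is the pure-dimensionality claim. The paper proves $\dim_x\bar{Q}=\dim Q$ at a boundary point by combining Ritt's lemma ($\partial Q$ nowhere dense implies $\dim_x\partial Q<\dim_x\bar{Q}$), a top-dimensional irreducible component through $x$ that must meet $Q$, and upper semicontinuity of $y\mapsto\dim_y\bar{Q}$. You instead note that $Q$ is a connected complex manifold, hence irreducible, hence $\bar{Q}$ is irreducible and therefore pure-dimensional of dimension $\dim Q$. This is shorter and legitimate, since the paper's strata are by definition connected components; the one step you leave implicit is that the closure of an irreducible analytic set, once known to be analytic, is again irreducible (which follows because $Q$ is a connected, dense, open subset of the regular locus of $\bar{Q}$). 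The paper's more laborious route has the minor advantage of not using connectedness of $Q$ at all, only pure-dimensionality.
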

\begin{proof}
  We prove by induction. Note that every stratum is pure dimensional. By
  Proposition~\ref{sec:orbit-type-strat-finitely-many}, let $d_1<d_2<\cdots<d_k$
  be possible values of dimensions among all the strata. By
  Proposition~\ref{sec:orbit-type-strat-lower-dim}, every stratum $Q$ of
  dimension $d_1$ is closed, and hence $\dim_x\bar{Q}=\dim Q$ for all
  $x\in\bar{Q}$. Now, suppose that the statement is true for all the strata of
  dimensions smaller than $d_i$. Let $Q$ be a stratum of dimension $d_i$.
  Therefore, $Q$ is a closed complex subspace of $\M\setminus\partial Q$. Write
  \begin{equation}
    \partial Q=Q_{l_1}\cup\cdots\cup Q_{l_k}=\bar{Q_{l_1}}\cup\cdots\cup\bar{Q_{l_k}},
  \end{equation}
  where each $Q_{l_i}$ is a stratum of dimension smaller than $d_i$. By
  induction, each $\bar{Q_{l_i}}$ is a closed complex subspace and
  $\dim_x\bar{Q_{l_i}}=\dim Q_{l_i}$ for all $x\in\bar{Q_{l_i}}$. Hence, $\dim
  Q>\dim\partial Q$. By the Remmert-Stein theorem, $\bar{Q}$ is a closed complex
  subspace. Now we show that $\dim_x\bar{Q}=\dim Q$ for every $x\in\bar{Q}$ to
  finish the proof. If $x\in Q$, then the openness of $Q$ in $\bar{Q}$ implies
  that $\dim_x\bar{Q}=\dim_xQ$. Therefore, we may assume that $x\in\partial Q$.
  Since $Q$ is open and dense in $\bar{Q}$, $\partial Q$ is nowhere dense.
  Hence, by \cite[Lemma of Ritt]{Grauert1984}, $\dim_x\partial
  Q<\dim_x\bar{Q}$. Let $S$ be an irreducible component of $\bar{Q}$ containing
  $x$ such that $\dim S=\dim_x\bar{Q}$. If $S\subset\partial Q$, then $\dim
  S\leq\dim_x\partial Q$, which is a contradiction. Hence, $S\cap
  Q\neq\emptyset$. As a consequence, since $Q$ is open in $\bar{Q}$,
  \begin{equation}
    \dim_x\bar{Q}=\dim S=\dim_x(Q\cap S)\leq\dim Q.
  \end{equation}
  Now, by the upper semicontinuity of the function $x\mapsto\dim_x\bar{Q}$ (see
  \cite[p.94]{Grauert1984}), there is an open neighborhood $U$ of $x$ in
  $\bar{Q}$ such that $\dim_y\bar{Q}\leq\dim_x\bar{Q}$ for all $y\in U$. Since
  $Q$ is open and dense in $\bar{Q}$, we may choose $y\in U\cap Q$. Hence,
  \begin{equation}
    \dim Q=\dim_y(U\cap Q)=\dim_y\bar{Q}\leq\dim_x\bar{Q}.
  \end{equation}
  Hence, $\dim Q=\dim_x\bar{Q}$.
\end{proof}
As a corollary, we obtain a codimension estimate of $\M\setminus\M^s$, where
$\M^s$ is the moduli space of stable Higgs bundles. Although this is a
well-known result (see \cite[Theorem II.6]{Faltings1993} and \cite[Lemma
11.2]{Simpson1994a}), we couldn't find an analytic proof in the literature.
\begin{corollary}\label{sec:orbit-type-strat-codimension-estimate}
  $\M^s$ is open and dense in $\M$, and $\codim_x(\M\setminus\M^s)\geq4g-6$ for
  every $x\in\M\setminus\M^s$, where $g$ is the genus of the Riemann surface $M$.
\end{corollary}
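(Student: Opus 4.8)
The plan is to realize $\M^s$ as the top-dimensional stratum of the orbit type stratification and to reduce the whole statement to a dimension count on the strata, governed by equivariant Riemann--Roch. First I would identify the loci involved. A polystable Higgs bundle is stable precisely when its only endomorphisms are scalars, i.e. when its $\G^\C$-stabilizer is the central $\C^*\subset\G^\C$; hence $\M^s$ is exactly the stratum of orbit type $(\C^*)$, while any other stratum $Q$ comes from a nontrivial decomposition $(\E_A,\Phi)=\bigoplus_{i=1}^k(\E_i,\Phi_i)^{\oplus m_i}$ into pairwise non-isomorphic stable factors of ranks $r_i$, with stabilizer $K^\C=\prod_i GL(m_i,\C)$ and with either $k\geq2$ or some $m_i\geq2$.

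Next I would compute $\dim Q$. By Proposition~\ref{sec:orbit-type-strat-main-results}(3) the tangent space is $T_{[A,\Phi]}Q=(\bH^1)^{K^\C}$, so it suffices to extract the $K^\C$-invariants of the deformation cohomology. Decomposing $\End(\E_A)=\bigoplus_{i,j}\Hom(\E_i,\E_j)\otimes\Hom(\C^{m_i},\C^{m_j})$ and using $\Hom(\C^{m_i},\C^{m_j})^{K^\C}=\delta_{ij}\C$ by Schur's lemma, the $K^\C$-invariant subcomplex of $C_{\mu_\C}$ is $\bigoplus_i C_{\mu_\C}(\E_i,\Phi_i)$; since taking invariants under the reductive $K^\C$ is exact, $(\bH^1)^{K^\C}=\bigoplus_i\bH^1(C_{\mu_\C}(\E_i,\Phi_i))$. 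For a stable factor $\dim\bH^0=\dim\bH^2=1$ and $\chi(C_{\mu_\C}(\E_i,\Phi_i))=2r_i^2(1-g)$ by Riemann--Roch, so $\dim\bH^1(C_{\mu_\C}(\E_i,\Phi_i))=2r_i^2(g-1)+2$. Hence
\[
  \dim Q=2(g-1)\sum_{i=1}^k r_i^2+2k,\qquad N:=\dim\M^s=2r^2(g-1)+2,
\]
where $r=\rank E$ and $N$ is the $k=1,\,m_1=1$ case.

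The heart is then the elementary inequality $N-\dim Q\geq4g-6$ for every nonstable type. Writing $r=\sum_i m_ir_i$ and $P=r^2-\sum_i r_i^2$, the claim becomes $(g-1)(P-2)\geq k-2$. I would first note $P\geq2$ always (for $k\geq2$ the cross terms give $P\geq\sum_{i\neq j}r_ir_j\geq k(k-1)\geq2$ since $r\geq\sum_i r_i$, while for $k=1$ one has $P=(m_1^2-1)r_1^2\geq3$), which settles the cases $k\leq2$; for $k\geq3$ the same bound gives $P\geq k(k-1)\geq k$, and $g-1\geq1$ then yields $(g-1)(P-2)\geq P-2\geq k-2$. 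Equality holds exactly when $k=2,\ m_1=m_2=1,\ r_1=r_2=1$, i.e. for a rank-two bundle splitting as a sum of two distinct line bundles, which is why $4g-6$ is sharp.

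Finally I would assemble the three conclusions. For openness, Proposition~\ref{sec:orbit-type-strat-finitely-many} gives finitely many strata, each $\bar Q$ is a closed complex subspace (Proposition~\ref{sec:orbit-type-strat-closure-stratum}), and $\bar Q\cap\M^s=\emptyset$ for nonstable $Q$ since $\M^s\subset\bar Q$ would force $\dim Q>N$ by Proposition~\ref{sec:orbit-type-strat-lower-dim}; thus $\M\setminus\M^s=\bigcup_{Q\neq\M^s}\bar Q$ is closed and $\M^s$ is open. For the codimension at $x$, I would bound the local dimension using the Kuranishi model $\nu_{0,\C}^{-1}(0)\sslash K^\C$: the central $\C^*$ acts trivially on $\bH^1$, so $\nu_{0,\C}$ involves at most $\dim K^\C-1$ independent functions and every $K^\C$-orbit has dimension at most $\dim K^\C-1$, whence $\dim_x\M\geq\dim\bH^1-2(\dim K^\C-1)=N$ (using $\dim\bH^1=2\dim K^\C+2r^2(g-1)$ at a point of type $K^\C$). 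Combined with $\dim_x(\M\setminus\M^s)=\max\{\dim Q:x\in\bar Q\}\leq N-(4g-6)$, this gives $\codim_x(\M\setminus\M^s)\geq4g-6$. Density is then automatic, for $\dim_x(\M\setminus\M^s)<\dim_x\M$ everywhere makes the analytic set $\M\setminus\M^s$ nowhere dense. The step I expect to be the main obstacle is the uniform local dimension bound $\dim_x\M\geq N$: controlling the dimension of the singular quotient $\nu_{0,\C}^{-1}(0)\sslash K^\C$ is exactly where the triviality of the central $\C^*$ is indispensable, and it is this, rather than the strata combinatorics alone, that forces the codimension to be positive and hence yields density.
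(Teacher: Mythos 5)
Your route is genuinely different from the paper's: the paper simply quotes an external result (Stecker, Corollary 3.24, transported through Proposition~\ref{sec:orbit-type-strat-Kob-Hit-preser}) both for the openness/density of $\M^s$ and for the key inequality $\dim\M^s-\dim Q\geq 4g-6$, and its only real content is the reduction $\codim_x(\M\setminus\M^s)=\dim\M^s-\dim Q_j$ via Proposition~\ref{sec:orbit-type-strat-closure-stratum}. You instead prove the dimension gap from scratch: the identification $(\bH^1)^{K^\C}=\bigoplus_i\bH^1(C_{\mu_\C}(\E_i,\Phi_i))$ via Schur's lemma and exactness of $K^\C$-invariants, the count $\dim Q=2(g-1)\sum_ir_i^2+2k$, and the combinatorial inequality $(g-1)(P-2)\geq k-2$ are all correct (including the sharpness analysis), and this makes the statement self-contained where the paper is not. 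That is a real gain.

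There is, however, one genuine gap: the local dimension bound $\dim_x\M\geq N$, which you correctly flag as the crux and on which both your density claim and the final codimension estimate rest. The step ``every $K^\C$-orbit has dimension at most $\dim K^\C-1$, whence $\dim\bigl(\nu_{0,\C}^{-1}(0)\sslash K^\C\bigr)\geq\dim\nu_{0,\C}^{-1}(0)-(\dim K^\C-1)$'' is not valid for analytic Hilbert (or GIT) quotients: fibers of $\pi\colon X\to X\sslash G$ are unions of orbits whose closures meet a common closed orbit, and such a fiber can have dimension far exceeding that of any single orbit (e.g.\ $\C^*$ acting on $\C^2$ with weights $(1,1)$ has quotient a point). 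So the inequality $\dim(X\sslash G)\geq\dim X-\max\dim(\text{orbit})$ fails in general, and your bound does not follow as written. The repair is available inside the paper's framework: replace $\nu_{0,\C}^{-1}(0)\sslash K^\C$ by the homeomorphic quotient $\n^{-1}(0)/K$ by the \emph{compact} group $K$ (Section~\ref{sec:constr-moduli-space}). There the fibers really are single $K$-orbits of real dimension at most $\dim_\R K-1$, the level set $\n^{-1}(0)$ is cut out by at most $3(\dim_\R K-1)$ real equations (the central components of both $\nu_0$ and $\nu_{0,\C}$ vanish identically), and one recovers $\dim_\R\M\geq\dim_\R\bH^1-4(\dim_\R K-1)=2N$ near $x$, which is exactly your bound. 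Alternatively, once density of $\M^s$ is granted from the literature as the paper does, $\dim_x\M=\dim\M^s$ follows directly from Proposition~\ref{sec:orbit-type-strat-closure-stratum} applied to $\bar{\M^s}=\M$, and your stratum computation finishes the proof.
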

\begin{proof}
  The first statement follows from \cite[Corollary 3.24]{Stecker2015} and
  Proposition~\ref{sec:orbit-type-strat-Kob-Hit-preser}. To show the second
  statement, we write
  \begin{equation}
    \M\setminus\M^s=Q_1\cup Q_2\cup\cdots\cup Q_k=\bar{Q_1}\cup\cdots\cup\bar{Q_k},
  \end{equation}
  where each $Q_i$ is a stratum. As a consequence,
  \begin{equation}
    \dim_x(\M\setminus\M^s)=\dim_x\bar{Q_j}=\dim Q_j
  \end{equation}
  for some $j$ (depending on $x$). Therefore, by
  Proposition~\ref{sec:orbit-type-strat-closure-stratum}, we obtain
  \begin{align}
    \begin{aligned}
      \codim_x(\M\setminus\M^s)
      &=\dim_x\M-\dim_x(\M\setminus\M^s)\\
      &=\dim\M^s-\dim Q_j.
    \end{aligned}
  \end{align}
  By \cite[Corollary 3.24]{Stecker2015}
  again, $\dim\M^s-\dim Q_j\geq4g-6$. 
\end{proof}

\section{Infinite dimensional GIT quotient}\label{sec:infin-dimens-git}
\subsection{S-equivalence classes and closures of
  orbits}\label{sec:s-equiv-class}
In Section~\ref{sec:infin-dimens-git}, we will prove
Theorem~\ref{sec:introduction-infi-dim-GIT}. We start with a simple lemma in
point-set topology.
\begin{lemma}\label{sec:s-equiv-class-lemma-orbit-space}
  Let $Y$ be a first countable topological space on which a topological group
  $G$ acts. Let ${x_n}$ be a sequence in $Y$ such that $[x_n]$ converges to
  $[x]$ for some $x\in Y$ in $Y/G$. Then, there exists a subsequence ${x_{n_k}}$
  and a sequence $g_k\in G$ such that $x_{n_k}g_k$ converges to $x$ in $Y$.
\end{lemma}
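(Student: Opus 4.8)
The plan is to exploit the single structural fact that makes quotients by group actions well-behaved: the quotient map $\pi\colon Y\to Y/G$ is open. Indeed, for any open $U\subseteq Y$ one has $\pi^{-1}(\pi(U))=\bigcup_{g\in G}Ug$, and since each right-translation $y\mapsto yg$ is a homeomorphism of $Y$, this preimage is open; by the definition of the quotient topology, $\pi(U)$ is therefore open. This is the only place where the hypothesis that $G$ \emph{acts} (rather than that $Y/G$ is an arbitrary quotient) enters.

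Next I would use first countability of $Y$ at the point $x$. I choose a countable neighborhood basis of $x$ and replace it by the nested family $V_1\supseteq V_2\supseteq\cdots$ obtained from finite intersections, so that $\{V_m\}$ is a decreasing neighborhood basis of $x$. By the openness just established, each $\pi(V_m)$ is an open neighborhood of $[x]$ in $Y/G$.

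Now I would extract the subsequence. Since $[x_n]\to[x]$ in $Y/G$, for each $m$ there is an index $N_m$ such that $[x_n]\in\pi(V_m)$ whenever $n\geq N_m$, and I may arrange $N_1<N_2<\cdots$. Setting $n_k=N_k$, the membership $[x_{n_k}]\in\pi(V_k)$ says precisely that the orbit $x_{n_k}G$ meets $V_k$, so there exists $g_k\in G$ with $x_{n_k}g_k\in V_k$. This translation of ``$[x_{n_k}]\in\pi(V_k)$'' into ``some translate of $x_{n_k}$ lands in $V_k$'' is the crux of the argument, and it is exactly what openness of $\pi$ supplies.

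Finally I would verify convergence. Given any neighborhood $W$ of $x$, I pick $m$ with $V_m\subseteq W$; then for all $k\geq m$ the nesting yields $x_{n_k}g_k\in V_k\subseteq V_m\subseteq W$, so $x_{n_k}g_k\to x$ in $Y$, as required. There is no genuine obstacle here beyond recognizing that the openness of $\pi$ is what converts convergence of orbits in $Y/G$ into convergence of suitably chosen representatives in $Y$; the remainder is a routine subsequence extraction made possible by first countability.
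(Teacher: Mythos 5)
Your proof is correct and follows essentially the same route as the paper: openness of the quotient map $\pi\colon Y\to Y/G$ plus a nested countable neighborhood basis of $x$, from which one picks group elements translating the $x_{n_k}$ into the shrinking neighborhoods. The only difference is cosmetic --- you spell out why $\pi$ is open and arrange the indices $N_1<N_2<\cdots$ explicitly so that $\{x_{n_k}\}$ is genuinely a subsequence, a detail the paper leaves implicit.
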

\begin{proof}
  Let $\pi\colon Y\to Y/G$ be the quotient map. Since $Y$ is first countable, we
  can find nested open neighborhoods
  \begin{equation}
    \cdots\subset U_k\subset U_{k-1}\subset\cdots\subset U_1
  \end{equation}
  of $x$ that form a neighborhood basis. Now, for each $U_k$, $\pi(U_k)$ is open
  and contains $[x]$. Hence, there exists some $[x_{n_k}]\in\pi(U_k)$.
  Therefore, there exists some $g_k\in G$ such that $x_{n_k}g_k\in U_k$. Now we
  claim that $x_{n_k}g_k$ converges to $x$. Let $V$ be an open neighborhood of
  $x$. Then, $U_j\subset V$ for some $j$. If $k>j$, then $x_{n_k}g_k\in
  U_k\subset U_j\subset V$.
\end{proof}

Then, among other properties, we first show that $\pi$ identify $\G^\C$-orbits
whose closures in $\B^{ss}$ intersect. 
\begin{proposition}\label{sec:s-equiv-class-s-equiv-closure-orbit}
  Two semistable Higgs bundles are S-equivalent if and only if the closures of
  their $\G^\C$-orbits in $\B^{ss}$ intersect.
\end{proposition}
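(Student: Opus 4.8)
The key observation is that by Wilkin's theorem (cited in Section~\ref{sec:constr-moduli-space}), the Yang-Mills-Higgs retraction $r\colon\B^{ss}\to\m^{-1}(0)$ sends a semistable Higgs bundle $(A,\Phi)$ to a polystable one whose associated Higgs bundle is isomorphic to the graded object $Gr(A,\Phi)$ of the Seshadri filtration. Hence, unwinding the definition, two semistable Higgs bundles $(A_1,\Phi_1)$ and $(A_2,\Phi_2)$ are $S$-equivalent precisely when $r(A_1,\Phi_1)$ and $r(A_2,\Phi_2)$ lie in the same $\G^\C$-orbit (equivalently, the same $\G$-orbit). This reduces the problem to understanding how orbit closures interact with the retraction $r$.

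For the \emph{only if} direction, I would argue as follows. Suppose $(A_1,\Phi_1)$ and $(A_2,\Phi_2)$ are $S$-equivalent. Then $r(A_1,\Phi_1)$ and $r(A_2,\Phi_2)$ are $\G^\C$-equivalent polystable Higgs bundles; call their common orbit $\sO$. The main point is that each $r(A_i,\Phi_i)$ lies in the closure of the orbit $(A_i,\Phi_i)\cdot\G^\C$ in $\B^{ss}$. This is a limiting statement for the Yang-Mills-Higgs flow: the limit $r(A_i,\Phi_i)$ is obtained from $(A_i,\Phi_i)$ by a path of complex gauge transformations (built from the flow together with the unitary correction identifying the limit with $Gr(A_i,\Phi_i)$), so it is a limit of points in the $\G^\C$-orbit. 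Consequently $\sO$, which meets both $\overline{(A_1,\Phi_1)\cdot\G^\C}$ and $\overline{(A_2,\Phi_2)\cdot\G^\C}$, witnesses the intersection of the two orbit closures. I would record the needed limiting fact as a short lemma, since it is the crux of this direction.

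For the \emph{if} direction, suppose the orbit closures of $(A_1,\Phi_1)$ and $(A_2,\Phi_2)$ in $\B^{ss}$ share a point $y$. I would like to conclude that $r(A_1,\Phi_1)$ and $r(A_2,\Phi_2)$ are $\G^\C$-equivalent. The natural tool is the continuity and $\G^\C$-invariance of the composite $\B^{ss}\xrightarrow{r}\m^{-1}(0)\to\m^{-1}(0)/\G$: if $p\colon\B^{ss}\to\m^{-1}(0)/\G$ denotes $(A,\Phi)\mapsto[r(A,\Phi)]$, then $p$ is constant on $\G^\C$-orbits and continuous, hence constant on orbit closures. Since $y$ lies in both closures, $p(A_1,\Phi_1)=p(y)=p(A_2,\Phi_2)$, so $r(A_1,\Phi_1)$ and $r(A_2,\Phi_2)$ are $\G$-equivalent, and thus $(A_1,\Phi_1)$ and $(A_2,\Phi_2)$ are $S$-equivalent.

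\textbf{The main obstacle} is establishing that $p=[r(\cdot)]$ descends to a continuous $\G^\C$-invariant map, since the retraction $r$ is a priori only continuous and is not obviously invariant under the full complex gauge group (the flow is defined using the unitary structure). The $\G$-invariance of $r$ is automatic, but for $\G^\C$-invariance one must appeal to the stronger form of the Hitchin-Kobayashi correspondence quoted in the introduction, namely that $r$ induces the inverse of the homeomorphism $(\mu^{-1}(0)\cap\B)/\G\xrightarrow{\sim}\B^{ps}/\G^\C$; this forces $[r(\cdot)]$ to be constant on $\G^\C$-orbits. Once invariance and continuity are in hand, both implications follow cleanly, so I would spend most of the writeup carefully citing and assembling these properties of $r$ rather than on new estimates.
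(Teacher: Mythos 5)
Your proof is correct, and your forward direction is the paper's: both rest on the facts that the Yang--Mills--Higgs flow stays inside the $\G^\C$-orbit, so $r(A,\Phi)\in\bar{(A,\Phi)\G^\C}$, and that $r(A,\Phi)\cong Gr(A,\Phi)$ by Wilkin's theorem, after which the $\G^\C$-invariance of the orbit closures gives a common point. The backward direction is where you genuinely diverge. The paper takes the intersection point $(B,\Psi)$, replaces it by $r(B,\Psi)$ to make it polystable, and then invokes the fact that an orbit closure in $\B^{ss}$ contains a unique polystable $\G^\C$-orbit (\cite[Lemma 3.7]{Fan2020}) to conclude $r(A_1,\Phi_1)\cong(B,\Psi)\cong r(A_2,\Phi_2)$. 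You instead package $r$ into the continuous $\G^\C$-invariant map $p=[r(\cdot)]\colon\B^{ss}\to\m^{-1}(0)/\G$ and use that a continuous map constant on an orbit is constant on its closure (which needs the target to be $T_1$; this holds because the $\G$-action is proper, a point worth one sentence). Both routes use the same inputs --- the continuity of $r$ and the injectivity half of the Hitchin--Kobayashi homeomorphism --- and indeed the paper uses the continuity of $r$ in exactly your manner in the very next proposition. Your version trades the uniqueness-of-the-polystable-orbit lemma for the verification that $[r(\cdot)]$ is constant on full $\G^\C$-orbits (not just $\G$-orbits), which you correctly resolve via $r(A,\Phi)\cong Gr(A,\Phi)$ and the uniqueness part of the Hitchin--Kobayashi correspondence; it is slightly more conceptual, being essentially the statement that the $S$-equivalence quotient factors through $r$, anticipating Proposition~\ref{sec:s-equiv-class-moduli-equiv}. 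The paper's version is more local and self-contained given the cited lemma.
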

\begin{proof}
  Let $r\colon\B^{ss}\to\m^{-1}(0)$ be the retraction defined by the
  Yang-Mills-Higgs flow. Since the Yang-Mills-Higgs flow preserves
  $\G^\C$-orbits, if $(A,\Phi)$ is a semistable Higgs bundle, then $r(A,\Phi)$
  is contained in $\bar{(A,\Phi)\G^\C}$. Moreover, by \cite[Theorem
  1.4]{Wilkin2008}, $r(A,\Phi)$ is isomorphic to $Gr(A,\Phi)$. Therefore, if
  $(A_1,\Phi_1)$ and $(A_2,\Phi_2)$ are two semistable Higgs bundles that are
  $S$-equivalent, then $r(A_1,\Phi_1)$ is isomorphic to $r(A_2,\Phi_2)$.
  Therefore,
  \begin{equation}
    \bar{(A_1,\Phi_1)\G^\C}\ni r(A_1,\Phi_1)\sim_{\G^\C}r(A_2,\Phi_2)\in\bar{(A_2,\Phi_2)\G^\C},
  \end{equation}
  where $\sim_{\G^\C}$ means the equivalence relation induced by the
  $\G^\C$-action.

  Conversely, suppose that
  \begin{equation}
    (B,\Psi)\in\bar{(A_1,\Phi_1)\G^\C}\cap\bar{(A_2,\Phi_2)\G^\C}\cap\B^{ss}.
  \end{equation}
  By replacing $(B,\Psi)$ by $r(B,\Psi)$, we may assume that $(B,\Psi)$ is
  polystable. Now, $r(A_1,\Phi_1)$ is also polystable and contained in
  $\bar{(A_1,\Phi_1)\G^\C}$. Since $\bar{(A_1,\Phi_1)\G^\C}$ contains a unique
  polystable orbit (see \cite[Lemma 3.7]{Fan2020}), $r(A_1,\Phi_1)$ is
  isomorphic to $(B,\Psi)$. Similar argument shows that $r(A_2,\Phi_2)$ is
  isomorphic to $(B,\Psi)$. Since $r(A_i,\Phi_i)$ is further isomorphic to
  $Gr(A_i,\Phi_i)$ for $i=1,2$. We see that $(A_1,\Phi_1)$ and $(A_2,\Phi_2)$
  are $S$-equivalent.
\end{proof}
Using the local slice theorem
(Proposition~\ref{sec:constr-moduli-space-local-slice}) for the $\G^\C$-action,
we are able to prove that polystable orbits in $\B^{ss}$ are exactly closed
orbits.
\begin{proposition}\label{sec:s-equiv-class-closedness-polystability}
  A semistable Higgs bundle is polystable if and only if its $\G^\C$-orbit is
  closed in $\B^{ss}$.
\end{proposition}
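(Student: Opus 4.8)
The plan is to prove both implications separately, with the forward direction being a short consequence of the retraction $r\colon\B^{ss}\to\m^{-1}(0)$ and the reverse direction resting on the local slice theorem (Proposition~\ref{sec:constr-moduli-space-local-slice}). For the forward direction, suppose the $\G^\C$-orbit of a semistable Higgs bundle $(A,\Phi)$ is closed in $\B^{ss}$. Since the Yang-Mills-Higgs flow preserves $\G^\C$-orbits, the retraction $r(A,\Phi)$ lies in $\overline{(A,\Phi)\G^\C}$, and closedness of the orbit forces $r(A,\Phi)\in(A,\Phi)\G^\C$. As $r(A,\Phi)\in\m^{-1}(0)$ solves Hitchin's equation, it is polystable by the Hitchin-Kobayashi correspondence; since polystability is a $\G^\C$-orbit invariant, $(A,\Phi)$ is polystable.

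For the reverse direction, let $(A,\Phi)$ be polystable. Its orbit already contains a solution of $\m=0$, and since the flow preserves orbits and converges to such a solution, $r(A,\Phi)\in(A,\Phi)\G^\C$; replacing $(A,\Phi)$ by $r(A,\Phi)$ we may assume $(A,\Phi)\in\m^{-1}(0)$. Let $(B,\Psi)\in\overline{(A,\Phi)\G^\C}\cap\B^{ss}$ be arbitrary; I must show $(B,\Psi)\in(A,\Phi)\G^\C$. The retraction $r(B,\Psi)$ is polystable and lies in $\overline{(B,\Psi)\G^\C}\subseteq\overline{(A,\Phi)\G^\C}$, so by the uniqueness of the polystable orbit in an orbit closure (\cite[Lemma 3.7]{Fan2020}) we have $r(B,\Psi)\cong(A,\Phi)$. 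After a further harmless replacement of $(A,\Phi)$ within its own orbit, I may assume $r(B,\Psi)=(A,\Phi)$, so that the flow carries $(B,\Psi)$ to $(A,\Phi)$.

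The crux is the local slice theorem at $(A,\Phi)$, with stabilizer $K^\C$ and slice neighborhood $\mathcal{N}\cong\Z K^\C\times_{K^\C}\G^\C$. The key observation is that the orbit meets $\mathcal{N}$ in a \emph{closed} set: by injectivity of the slice map, a point $\theta(x)g$ lies in $(A,\Phi)\G^\C=\theta(0)\G^\C$ only when $x\in 0\cdot K^\C=\{0\}$, so $(A,\Phi)\G^\C\cap\mathcal{N}$ is the image of $\{0\}\times\G^\C$; because $0$ is a fixed point of the linear $K^\C$-action on $\bH^1$, this image is closed in $\mathcal{N}$. Now let $\gamma(t)$ denote the Yang-Mills-Higgs trajectory with $\gamma(0)=(B,\Psi)$; it remains in $(B,\Psi)\G^\C\subseteq\overline{(A,\Phi)\G^\C}$ and converges to $(A,\Phi)$, so $\gamma(t)\in\mathcal{N}$ for large $t$. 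Since $\mathcal{N}$ is open, the orbit points approximating $\gamma(t)$ eventually lie in $(A,\Phi)\G^\C\cap\mathcal{N}$, and closedness of this set in $\mathcal{N}$ yields $\gamma(t)\in(A,\Phi)\G^\C$. Therefore $(B,\Psi)\G^\C=(A,\Phi)\G^\C$, as desired.

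The main obstacle is the reverse direction: the limit point $(B,\Psi)$ need not lie in the slice neighborhood $\mathcal{N}$, so the local picture cannot be applied to it directly. The device that overcomes this is the Yang-Mills-Higgs flow, which—because it preserves $\G^\C$-orbits and converges to the polystable representative—slides $(B,\Psi)$ along its own orbit into $\mathcal{N}$ without leaving $\overline{(A,\Phi)\G^\C}$. The other point requiring care is the closedness of $(A,\Phi)\G^\C\cap\mathcal{N}$, which ultimately rests on the elementary but essential fact that the polystable representative corresponds to the $K^\C$-fixed origin of the Kuranishi model $\bH^1$.
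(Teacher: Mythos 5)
Your proof is correct and follows essentially the same route as the paper: the forward direction via the Yang--Mills--Higgs retraction, and the converse via the local slice theorem together with the observation that the orbit of the origin is closed in the model $\Z K^\C\times_{K^\C}\G^\C$ because $0$ is $K^\C$-fixed (the paper packages this last point as a sequential lemma, you as a saturation argument, but it is the same fact). The only cosmetic difference is how $(B,\Psi)$ is brought into the slice neighborhood: you slide it there along its own flow line using the uniqueness of the polystable orbit in an orbit closure, whereas the paper notes that the neighborhood is $\G^\C$-invariant and already meets $(B,\Psi)\G^\C$ since $(A,\Phi)\in\bar{(B,\Psi)\G^\C}$.
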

\begin{proof}
  The same proof of \cite[Proposition 2.4(ii)]{Sjamaar1995} works.
  For the sake of completeness, we spell out the details. Let $(A,\Phi)$ be a
  semistable Higgs bundle and $r\colon\B^{ss}\to\m^{-1}(0)$ be the retraction
  defined by the Yang-Mills-Higgs flow. Since the Yang-Mills-Higgs flow
  preserves the $\G^\C$-orbits, if $(A,\Phi)\G^\C$ is closed in $\B^{ss}$, then
  obviously $r(A,\Phi)\in(A,\Phi)\G^\C$. This means that $(A,\Phi)$ is
  polystable. Conversely, assume that $(A,\Phi)$ is polystable. By the
  Hitchin-Kobayashi correspondence, we may assume that $(A,\Phi)$ lies in
  $\m^{-1}(0)$. Let $(A,\Phi)g_i$ be a sequence converging to some
  $(B,\Psi)\in\B^{ss}$. Since $(A,\Phi)$ is polystable, $r[(A,\Phi)g_i]$ is
  isomorphic to $(A,\Phi)$. By the Hitchin-Kobayashi correspondence,
  $r[(A,\Phi)g_i]\in(A,\Phi)\G$. Moreover, by continuity, $r[(A,\Phi)g_i]$
  converges to $r(B,\Psi)$. Since the $\G$-action is proper, $(A,\Phi)\G$ is
  closed, and hence $r(B,\Psi)\in(A,\Phi)\G$. On the other hand,
  $r(B,\Psi)\in\bar{(B,\Psi)\G^\C}$, and hence $(A,\Phi)\in\bar{(B,\Psi)\G^\C}$.
  By Proposition~\ref{sec:constr-moduli-space-kura-map-prop}, there is an
  $\G^\C$-invariant open neighborhood $U$ of $(A,\Phi)$ such that $\Z
  K^\C\times_{K^\C}\G^\C\to U$ is a homeomorphism. As a consequence,
  $(B,\Psi)\in U$.

  Then, it suffices to show that $(A,\Phi)\G^\C$ is closed in $U$. By the
  homeomorphism $\Z K^\C\times_{K^\C}\G^\C\to U$, it suffices to prove that if
  $[0,g_i]$ converges to $[x,g]$, then $x=0$. By
  Lemma~\ref{sec:s-equiv-class-lemma-orbit-space}, there is a subsequence
  $g_{i_k}$ and a sequence $h_k\in K^\C$ such that $(0\cdot
  h_k^{-1},h_kg_{i_k})$ converges to $(x,g)$. This immediately shows that $x=0$.
\end{proof}

The following result allows us to identify $\M$ with $\B^{ss}\sslash\G^\C$. From
now on, we will use $[\cdot]_S$ and $[\cdot]$ to denote S-equivalence classes
and isomorphism classes, respectively.
\begin{proposition}\label{sec:s-equiv-class-moduli-equiv}
  The inclusion $\B^{ps}\hookrightarrow\B^{ss}$ induces a homeomorphism
  \begin{equation}
    \B^{ps}/\G^\C\xrightarrow{\sim}\B^{ss}\sslash\G^\C.
  \end{equation}
\end{proposition}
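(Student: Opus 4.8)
The plan is to realize the induced map as a continuous bijection possessing a continuous inverse, with the two preceding propositions controlling injectivity and surjectivity and the Yang--Mills--Higgs retraction $r\colon\B^{ss}\to\m^{-1}(0)$ supplying the inverse.

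First I would produce the forward map. The composite $\B^{ps}\hookrightarrow\B^{ss}\xrightarrow{\pi}\B^{ss}\sslash\G^\C$ is continuous and constant on $\G^\C$-orbits, so by the universal property of the quotient topology it descends to a continuous map $\Psi\colon\B^{ps}/\G^\C\to\B^{ss}\sslash\G^\C$ carrying an isomorphism class $[A,\Phi]$ to its $S$-equivalence class $[A,\Phi]_S$.

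Next I would check that $\Psi$ is bijective. For surjectivity, any semistable $(A,\Phi)$ satisfies $r(A,\Phi)\in\overline{(A,\Phi)\G^\C}$, and $r(A,\Phi)$ is polystable because it is isomorphic to $Gr(A,\Phi)$ by \cite[Theorem 1.4]{Wilkin2008}; hence by Proposition~\ref{sec:s-equiv-class-s-equiv-closure-orbit} the polystable bundle $r(A,\Phi)$ is $S$-equivalent to $(A,\Phi)$, so every $S$-class lies in the image of $\Psi$. For injectivity, suppose two polystable Higgs bundles are $S$-equivalent. By Proposition~\ref{sec:s-equiv-class-s-equiv-closure-orbit} their $\G^\C$-orbit closures intersect, while by Proposition~\ref{sec:s-equiv-class-closedness-polystability} each orbit is already closed; therefore the two orbits coincide and the bundles are isomorphic.

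Finally I would construct the inverse and verify it is continuous. Define $\Phi\colon\B^{ss}\sslash\G^\C\to\B^{ps}/\G^\C$ as the descent of the composite $\B^{ss}\xrightarrow{r}\m^{-1}(0)\to\m^{-1}(0)/\G\xrightarrow{\sim}\B^{ps}/\G^\C$, where the last arrow is the Hitchin--Kobayashi homeomorphism recalled in the introduction. This composite is constant on $S$-equivalence classes, since $S$-equivalent bundles have isomorphic graded objects and hence isomorphic retraction values, so it factors through $\B^{ss}\sslash\G^\C$. A direct computation using $r(A,\Phi)\cong(A,\Phi)$ for polystable $(A,\Phi)$ and $r(A,\Phi)\sim_S(A,\Phi)$ in general shows $\Phi$ and $\Psi$ are mutually inverse, so $\Psi$ is a homeomorphism. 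The only nonformal input --- and the step I expect to be the main obstacle --- is the continuity of $r$ as a map into the quotient $\B^{ps}/\G^\C$; once the continuity of the Yang--Mills--Higgs retraction is invoked, the remaining verifications are routine point-set topology.
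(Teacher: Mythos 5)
Your proposal is correct and follows essentially the same route as the paper: both construct the inverse by descending the Yang--Mills--Higgs retraction $r$ to a map $[A,\Phi]_S\mapsto[r(A,\Phi)]$ and verify the two composites are identities using $r(A,\Phi)\cong Gr(A,\Phi)$. The separate bijectivity check is redundant once mutual inverses are exhibited, and the continuity of $r$ is indeed the only external input, taken for granted in the paper as well.
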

\begin{proof}
  Let $r\colon\B^{ss}\to\m^{-1}(0)$ be the retraction defined by the
  Yang-Mills-Higgs flow. We claim that $r$ induces the inverse of the map
  \begin{equation}
    \bar{j}\colon\B^{ps}/\G^\C\xrightarrow{\sim}\B^{ss}\sslash\G^\C,
  \end{equation}
  where $j\colon\B^{ps}\hookrightarrow\B^{ss}$ is the inclusion. By definition
  of the $S$-equivalence, $r$ induces a well-defined continuous map
  \begin{equation}
    \bar{r}\colon\B^{ss}\sslash\G^\C\to\B^{ps}/\G^\C\qquad[A,\Phi]_S\mapsto[r(A,\Phi)].
  \end{equation}
  Then, if $(A,\Phi)$ is a polystable Higgs bundle,
  \begin{equation}
    \bar{r}\bar{j}[A,\Phi]=\bar{r}[A,\Phi]_S=[r(A,\Phi)].
  \end{equation}
  Since $(A,\Phi)$ is polystable, it is isomorphic to the graded object
  $Gr(A,\Phi)$ and hence to $r(A,\Phi)$. Therefore, $[r(A,\Phi)]=[A,\Phi]$.
  Conversely, if $(A,\Phi)$ is semistable, then
  \begin{equation}
    \bar{j}\bar{r}[A,\Phi]_S=\bar{j}[r(A,\Phi)]=[r(A,\Phi)]_S.
  \end{equation}
  By definition of the $S$-equivalence, $(A,\Phi)$ is $S$-equivalent to $r(A,\Phi)$,
  since $r(A,\Phi)$ is isomorphic to $Gr(A,\Phi)$. Hence,
  $[r(A,\Phi)]_S=[A,\Phi]_S$.
\end{proof}
\begin{corollary}\label{sec:s-equiv-class-unique-closed-orbit}
  Every fiber of $\pi\colon\B^{ss}\to\B^{ss}\sslash\G^\C$ contains a unique
  $\G^\C$-orbit that is closed in $\B^{ss}$. 
\end{corollary}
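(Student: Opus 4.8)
The plan is to deduce both existence and uniqueness directly from Propositions~\ref{sec:s-equiv-class-s-equiv-closure-orbit} and \ref{sec:s-equiv-class-closedness-polystability}, using the retraction $r\colon\B^{ss}\to\m^{-1}(0)$ as the device that singles out a distinguished representative inside each fiber. The starting observation is purely bookkeeping: since $\pi$ is by definition the quotient by $S$-equivalence, a fiber of $\pi$ is precisely an $S$-equivalence class of semistable Higgs bundles.

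For existence, I would fix a semistable Higgs bundle $(A,\Phi)$ and pass to $r(A,\Phi)$. Because the Yang--Mills--Higgs flow preserves $\G^\C$-orbits and $r(A,\Phi)$ is its limit, we have $r(A,\Phi)\in\bar{(A,\Phi)\G^\C}$; in particular the orbit closures of $(A,\Phi)$ and $r(A,\Phi)$ meet, so Proposition~\ref{sec:s-equiv-class-s-equiv-closure-orbit} shows the two are $S$-equivalent and hence lie in the same fiber of $\pi$. Moreover $r(A,\Phi)$ lands in $\m^{-1}(0)$ and is isomorphic to $Gr(A,\Phi)$, so it is polystable, and Proposition~\ref{sec:s-equiv-class-closedness-polystability} then guarantees that its $\G^\C$-orbit is closed in $\B^{ss}$. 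Thus every fiber contains at least one closed orbit.

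For uniqueness, I would suppose $(A_1,\Phi_1)\G^\C$ and $(A_2,\Phi_2)\G^\C$ are two closed orbits lying in the same fiber. Being in the same fiber, $(A_1,\Phi_1)$ and $(A_2,\Phi_2)$ are $S$-equivalent, so Proposition~\ref{sec:s-equiv-class-s-equiv-closure-orbit} produces a point in $\bar{(A_1,\Phi_1)\G^\C}\cap\bar{(A_2,\Phi_2)\G^\C}$. Since each orbit is closed, its closure equals the orbit itself, so the two orbits share a point and therefore coincide. (Equivalently, by Proposition~\ref{sec:s-equiv-class-closedness-polystability} both orbits are polystable, and two $S$-equivalent polystable Higgs bundles are each isomorphic to their common graded object and hence to one another.)

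The argument is short precisely because all of the analytic content has already been absorbed into the cited propositions; the only steps requiring care are the identification of a fiber of $\pi$ with an $S$-equivalence class and the trivial but essential remark that a closed orbit equals its own closure, so that intersecting closures force equality of orbits. I would expect no genuine obstacle beyond making this bookkeeping explicit.
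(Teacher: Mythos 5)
Your proof is correct and is essentially the paper's argument: the paper deduces the corollary from Proposition~\ref{sec:s-equiv-class-closedness-polystability} together with Proposition~\ref{sec:s-equiv-class-moduli-equiv}, and your use of the retraction $r$ plus Proposition~\ref{sec:s-equiv-class-s-equiv-closure-orbit} simply re-derives inline the content of Proposition~\ref{sec:s-equiv-class-moduli-equiv} (that each $S$-equivalence class contains a unique polystable orbit). The bookkeeping steps you flag --- a fiber of $\pi$ is an $S$-equivalence class, and a closed orbit equals its own closure so intersecting closures force equality --- are exactly what the paper's citation implicitly relies on.
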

\begin{proof}
  This follows from Proposition~\ref{sec:s-equiv-class-closedness-polystability}
  and \ref{sec:s-equiv-class-moduli-equiv}.
\end{proof}

\subsection{$\pi$-saturated open neighborhoods}\label{sec:pi-saturated-open}
To further study the quotient map $\pi\colon\B^{ss}\to \M$, we will improve the
local slice theorem (Proposition~\ref{sec:constr-moduli-space-local-slice}) so
that the open neighborhood in $\B^{ss}$ provided by the theorem is not only
$\G^\C$-invariant but also saturated with respect to $\pi$.
\begin{lemma}\label{sec:pi-saturated-open-lemma-satura}
  Let $U$ be a $\G^\C$-invariant open subset of $\B^{ss}$. Then the following are equivalent
  \begin{enumerate}
  \item If $(A,\Phi)\in U$, then the closure of its $\G^\C$-orbit in $\B^{ss}$
    is contained in $U$.
    
  \item $U$ is $\pi$-saturated.
  \end{enumerate}
\end{lemma}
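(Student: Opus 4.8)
The plan is to translate both conditions into statements about $S$-equivalence and then invoke Proposition~\ref{sec:s-equiv-class-s-equiv-closure-orbit}, which identifies $S$-equivalence with the meeting of $\G^\C$-orbit closures in $\B^{ss}$. Since $\M=\B^{ss}\sslash\G^\C$, the fibers of $\pi$ are exactly the $S$-equivalence classes, so that $U$ is $\pi$-saturated precisely when $x\in U$ together with $\pi(y)=\pi(x)$ forces $y\in U$, equivalently when $x\in U$ and $x$ being $S$-equivalent to $y$ forces $y\in U$. Both implications become short once this dictionary is in place. Throughout I would take all orbit closures inside $\B^{ss}$, as in Proposition~\ref{sec:s-equiv-class-s-equiv-closure-orbit}.

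For $(2)\Rightarrow(1)$ I would take $x\in U$ and $y\in\overline{x\G^\C}$. Since $y\in\overline{y\G^\C}\cap\overline{x\G^\C}$, the two orbit closures meet, so Proposition~\ref{sec:s-equiv-class-s-equiv-closure-orbit} gives that $x$ and $y$ are $S$-equivalent, i.e.\ $\pi(x)=\pi(y)$. As $U$ is $\pi$-saturated and $x\in U$, it follows that $y\in U$, which proves $\overline{x\G^\C}\subseteq U$.

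For $(1)\Rightarrow(2)$, since $U\subseteq\pi^{-1}(\pi(U))$ always holds, I only need the reverse inclusion. Take $y$ with $\pi(y)=\pi(x)$ for some $x\in U$; then $x$ and $y$ are $S$-equivalent, so by Proposition~\ref{sec:s-equiv-class-s-equiv-closure-orbit} there is a point $p\in\overline{x\G^\C}\cap\overline{y\G^\C}$. The crux is the following observation: condition~(1) applied to $x\in U$ gives $p\in\overline{x\G^\C}\subseteq U$, so $U$ is an open neighborhood of $p$; but $p\in\overline{y\G^\C}$ means every neighborhood of $p$ meets the orbit $y\G^\C$, so $yg\in U$ for some $g\in\G^\C$, and the $\G^\C$-invariance of $U$ then yields $y\in U$.

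The only genuinely nontrivial point is this last step. One might expect to need the local slice theorem (Proposition~\ref{sec:constr-moduli-space-local-slice}) in order to ``flow back up'' from the degenerate point $p$ to the possibly less degenerate point $y$; but in fact openness together with $\G^\C$-invariance of $U$ accomplishes this directly, once Proposition~\ref{sec:s-equiv-class-s-equiv-closure-orbit} supplies a common point $p$ of the two orbit closures. I therefore do not anticipate a serious obstacle. The two things to be careful about are that all closures are taken in $\B^{ss}$, so that Proposition~\ref{sec:s-equiv-class-s-equiv-closure-orbit} applies, and that the definition of $\pi$-saturation (the identity $U=\pi^{-1}(\pi(U))$) is unwound correctly into the orbit-theoretic statements above.
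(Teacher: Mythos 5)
Your proof is correct and takes essentially the same route as the paper: both directions reduce to Proposition~\ref{sec:s-equiv-class-s-equiv-closure-orbit}, and for $(1)\Rightarrow(2)$ the paper likewise produces a common point of the two orbit closures (via the polystable representative from Corollary~\ref{sec:s-equiv-class-unique-closed-orbit}, though any common point suffices, as you note) and then uses openness and $\G^\C$-invariance of $U$ to conclude. The only cosmetic difference is that the paper phrases the last step as a contradiction while you argue directly.
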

\begin{proof}
  By Proposition~\ref{sec:s-equiv-class-s-equiv-closure-orbit}, $(2)$ implies
  $(1)$. To show that $(1)$ implies $(2)$, suppose that $(B,\Phi)\in U$ and
  $(B',\Phi')\in B^{ss}$ such that $\pi(B,\Psi)=\pi(B',\Psi')$. We need to show
  that $(B',\Psi')\in U$. By
  Corollary~\ref{sec:s-equiv-class-unique-closed-orbit}, there exists a
  polystable Higgs bundle $(B'',\Psi'')$ such that
  \begin{equation}
    (B'',\Psi'')\G^\C\subset\bar{(B,\Psi)\G^\C}\cap\bar{(B',\Psi')\G^\C}\cap\B^{ss}.
  \end{equation}
  By assumption $(1)$, $(B'',\Psi'')\in U$. If $(B',\Psi')\notin U$, then
  \begin{equation}
    \bar{(B',\Psi')\G^\C}\cap U\cap\B^{ss}=\emptyset.
  \end{equation}
  This is a contradiction.
\end{proof}
\begin{proposition}\label{sec:pi-saturated-open-invariant-open-contains-sat}
  Let $(A,\Phi)$ be a polystable Higgs bundle. Then, every $\G^\C$-invariant
  open neighborhood of $(A,\Phi)$ in $\B^{ss}$ contains a $\pi$-saturated open
  neighborhood.
\end{proposition}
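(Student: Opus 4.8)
The plan is to prove that a $\G^\C$-invariant open neighborhood of a polystable Higgs bundle $(A,\Phi)$ contains a $\pi$-saturated open neighborhood, using the criterion from Lemma~\ref{sec:pi-saturated-open-lemma-satura}: it suffices to find an open $\G^\C$-invariant set $V$ with $(A,\Phi)\in V$ such that $V$ is contained in the given neighborhood and such that, for every $(B,\Psi)\in V$, the closure $\bar{(B,\Psi)\G^\C}$ in $\B^{ss}$ is also contained in $V$. By Corollary~\ref{sec:s-equiv-class-unique-closed-orbit}, that closure contains a unique closed orbit, and by Proposition~\ref{sec:s-equiv-class-closedness-polystability} this closed orbit is polystable; so the essential point is to control where the polystable representatives of nearby orbits live.

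First I would reduce to the local model. By the local slice theorem (Proposition~\ref{sec:constr-moduli-space-local-slice}), there is a $\G^\C$-invariant open neighborhood realized as the image of $\Z K^\C\times_{K^\C}\G^\C\to\B^{ss}$, $[x,g]\mapsto\theta(x)g$, and we may shrink the given neighborhood to lie inside this image. Since $\Z=B\cap\nu_{0,\C}^{-1}(0)$ and the Kuranishi picture identifies the quotient with an open neighborhood of $[0]$ in $\nu_{0,\C}^{-1}(0)\sslash K^\C$, the problem transfers to the finite-dimensional linear setting: I would reduce to finding, inside a given $K^\C$-invariant open neighborhood of $0$ in $\nu_{0,\C}^{-1}(0)$, a smaller $K^\C$-invariant open neighborhood $\Omega$ of $0$ that contains the full orbit closure in $\nu_{0,\C}^{-1}(0)$ of each of its points. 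Because $0$ is the (polystable) fixed point and the $K^\C$-action on $\bH^1$ is linear, the closed orbit inside $\bar{xK^\C}$ is obtained as a limit of the $\C^*$-scalings coming from a one-parameter subgroup of $K^\C$; the Kempf--Ness / Hilbert--Mumford description of semistability guarantees that the polystable representative of a point near $0$ is again near $0$. Concretely, I would choose $\Omega$ to be a $K^\C$-invariant \emph{saturated} open ball, which exists in the linear GIT setting precisely because the analytic Hilbert quotient $\pi\colon\nu_{0,\C}^{-1}(0)\to\nu_{0,\C}^{-1}(0)\sslash K^\C$ is an open map with Stein (hence well-behaved) fibers, so that preimages of small open neighborhoods of $[0]$ are saturated by construction.

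The cleanest route is therefore to take $\Omega=\pi^{-1}(W)$ for a sufficiently small open neighborhood $W$ of $[0]$ in $\nu_{0,\C}^{-1}(0)\sslash K^\C$ with $\Omega$ inside the prescribed neighborhood; such $\Omega$ is automatically saturated, and pulling it back through the slice homeomorphism and then saturating under $\G^\C$ produces the desired $\pi$-saturated open neighborhood in $\B^{ss}$. To verify that the $\G^\C$-saturation stays inside the original neighborhood, I would invoke the stabilizer-preserving property (Proposition~\ref{sec:constr-moduli-space-kura-map-prop}(4)) together with the fact that the slice map is a homeomorphism onto an open set, so that the unique closed orbit in the closure of a nearby orbit corresponds to a point of $\Z K^\C$ over $W$, which by saturation of $\Omega$ lies in the slice.

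The main obstacle I anticipate is the passage between the infinite-dimensional quotient $\pi\colon\B^{ss}\to\M$ and the finite-dimensional local model: one must check that orbit closures \emph{computed in $\B^{ss}$} correspond correctly to orbit closures \emph{computed in $\Z K^\C$} under the slice homeomorphism, i.e. that the closed $\G^\C$-orbit in $\bar{(B,\Psi)\G^\C}$ does not ``escape'' the slice neighborhood. This is where Proposition~\ref{sec:s-equiv-class-closedness-polystability} and the uniqueness in Corollary~\ref{sec:s-equiv-class-unique-closed-orbit} do the real work: the limiting polystable orbit is forced to be the image of the closed $K^\C$-orbit through $0$'s fiber, and hence stays inside $\Omega$. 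Once this correspondence of closures is established, the equivalence in Lemma~\ref{sec:pi-saturated-open-lemma-satura} finishes the proof immediately.
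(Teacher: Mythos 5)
Your strategy (verify the orbit-closure criterion of Lemma~\ref{sec:pi-saturated-open-lemma-satura} by localizing to the Kuranishi model) is not the paper's, and it has a genuine gap at exactly the point you flag as the "main obstacle." The Kuranishi/GIT picture controls $K^\C$-orbit closures \emph{inside} $\bH^1$: taking $\Omega$ saturated for the local quotient does guarantee that the closed $K^\C$-orbit in $\overline{xK^\C}$ stays in $\Omega$. But what you must control is the closure of $\theta(x)\G^\C$ computed \emph{in} $\B^{ss}$ (equivalently, by Lemma~\ref{sec:pi-saturated-open-lemma-satura}, the whole fiber $\pi^{-1}(\pi(\theta(x)))$, i.e.\ the entire $S$-equivalence class). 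A sequence $\theta(x)g_n$ can a priori converge to a semistable Higgs bundle that is not in the image of the slice at all; the uniqueness of the closed orbit in the fiber (Corollary~\ref{sec:s-equiv-class-unique-closed-orbit}) pins down only that one closed orbit and says nothing about such intermediate limit orbits or the rest of the fiber. Nothing in your argument rules this escape out. Worse, the assertion that a (shrunk) slice image, saturated under $\G^\C$, is $\pi$-saturated is precisely Theorem~\ref{sec:pi-saturated-open-slice-satura}, whose proof in the paper \emph{uses} the present proposition to first manufacture a $\pi$-saturated open set $U'$ inside the slice image. So as written your route is circular.

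The paper's proof is global, elementary, and avoids the local model entirely: take a nested countable neighborhood basis $V_n$ of $[A,\Phi]_S$ in $\M$ and claim $\pi^{-1}(V_n)\subset U$ for some $n$ (such a preimage is automatically open, $\G^\C$-invariant and $\pi$-saturated). If not, there are $(A_n,\Phi_n)\notin U$ with $[A_n,\Phi_n]_S\to[A,\Phi]_S$; since $\B^{ss}\setminus U$ is closed and $\G^\C$-invariant, the polystable representatives of these orbits also lie outside $U$, so one may take each $(A_n,\Phi_n)$ polystable, whence $[A_n,\Phi_n]\to[A,\Phi]$ in $\B^{ps}/\G^\C$ by Proposition~\ref{sec:s-equiv-class-moduli-equiv}; Lemma~\ref{sec:s-equiv-class-lemma-orbit-space} then produces gauge translates converging to $(A,\Phi)\in U$, contradicting $(A_{n_k},\Phi_{n_k})\cdot g_k\notin U$. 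To salvage your approach you would need an independent argument that orbit closures in $\B^{ss}$ of points near $(A,\Phi)$ cannot leave a prescribed invariant neighborhood --- which is essentially this convergence argument again.
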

\begin{proof}
  Let $U$ be an $\G^\C$-invariant open neighborhood of $(A,\Phi)$ in $\B^{ss}$.
  Take a neighborhood basis $V_n$ of $[A,\Phi]_S$ in $\B^{ss}\sslash\G^\C$ such
  that $V_n\subset V_{n-1}$ for all $n\geq1$. We claim that $\pi^{-1}(V_n)$ is
  contained in $U$ for some $n$, where
  $\pi\colon\B^{ss}\to\B^{ss}\sslash\G^\C$is the quotient map. Assuming the
  contrary, we can choose a sequence $(A_n,\Phi_n)$ such that
  \begin{enumerate}
  \item $(A_n,\Phi_n)\notin U$.
    
  \item $[A_n,\Phi_n]_S$ converges to $[A,\Phi]_S$ in $\M$.
  \end{enumerate}
  Since $U$ is $\G^\C$-invariant, the closure of $(A_n,\Phi_n)\G^\C$ in
  $\B^{ss}$ is contained in $\B^{ss}\setminus U$. Then, since the closure of
  $(A_n,\Phi_n)\G^\C$ in $\B^{ss}$ contains a unique polystable orbit
  (Proposition~\ref{sec:s-equiv-class-unique-closed-orbit}), we may assume that
  each $(A_n,\Phi_n)$ is polystable. As a consequence, $[A_n,\Phi_n]$ converges
  to $[A,\Phi]$ in $\B^{ps}/\G^\C$. By
  Lemma~\ref{sec:s-equiv-class-lemma-orbit-space}, there is a subsequence
  $(A_{n_k},\Phi_{n_k})$ and a sequence $g_k\in\G^\C$ such that
  $(A_{n_k},\Phi_{n_k})\cdot g_k$ converges to $(A,\Phi)$. This is impossible, since
  $(A_{n_k},\Phi_{n_k})\cdot g_k\notin U$.
\end{proof}
\begin{theorem}\label{sec:pi-saturated-open-slice-satura}
  Let $(A,\Phi)\in\m^{-1}(0)$. If $B$ is sufficiently small, then the map
  \begin{equation}
    \bar{\theta}\colon\Z K^\C\times_{K^\C}\G^\C\to\B^{ss}\qquad[x,g]\mapsto\theta(x)g
  \end{equation}
  is a homeomorphism
  onto an $\pi$-saturated open neighborhood of $(A,\Phi)$ in $\B^{ss}$.
\end{theorem}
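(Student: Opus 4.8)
The plan is to upgrade the local slice theorem (Proposition~\ref{sec:constr-moduli-space-local-slice}) using Proposition~\ref{sec:pi-saturated-open-invariant-open-contains-sat}, the crucial extra input being that a sufficiently small round Kuranishi ball centred at $0$ already cuts out a set that is saturated with respect to the \emph{local} analytic Hilbert quotient $\tilde\pi\colon\Z K^\C\to\Z K^\C\sslash K^\C$. First I would take $B$ small enough for Proposition~\ref{sec:constr-moduli-space-local-slice} to give a homeomorphism $\bar\theta$ onto a $\G^\C$-invariant open neighborhood $U$ of $(A,\Phi)$. Since $(A,\Phi)\in\m^{-1}(0)$ is polystable, Proposition~\ref{sec:pi-saturated-open-invariant-open-contains-sat} furnishes a $\pi$-saturated open neighborhood $V\subset U$ of $(A,\Phi)$. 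The task is then to shrink the ball to some $B'$ so that the resulting image of $\bar\theta$ both lands inside $V$ and is itself $\pi$-saturated.

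To do so I would transport $V$ back through the homeomorphism. As $V$ is $\G^\C$-invariant and $\bar\theta$ is $\G^\C$-equivariant, $\bar\theta^{-1}(V)=S\times_{K^\C}\G^\C$ for a $K^\C$-invariant open set $S\subset\Z K^\C$ with $0\in S$. Recalling that $\Z=B\cap\nu_{0,\C}^{-1}(0)$, the slice $S\cap\nu_{0,\C}^{-1}(0)$ is open in $\nu_{0,\C}^{-1}(0)$ and contains $0$, so I may pick a smaller round ball $B'$ with $\Z'=B'\cap\nu_{0,\C}^{-1}(0)\subset S$. Because $S$ is $K^\C$-invariant this forces $\Z' K^\C\subset S$, and hence the image of $\bar\theta$ after replacing $B$ by $B'$ is contained in $V$.

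The heart of the argument is saturation. I would observe that $\pi\circ\bar\theta$ factors as $\varphi\circ q$, where $q[x,g]=\tilde\pi(x)$ and $\varphi\colon\Z K^\C\sslash K^\C\to\M$ is the injective Kuranishi map from Section~\ref{sec:constr-moduli-space}; consequently a $\G^\C$-invariant subset of $U$ is $\pi$-saturated in $U$ precisely when the $K^\C$-invariant set it determines in $\Z K^\C$ is $\tilde\pi$-saturated. The key point is that $\Z' K^\C$ is $\tilde\pi$-saturated. To see this I would introduce the minimal-norm function $m(\tilde\pi(x))=\inf_{g\in K^\C}\|xg\|_{L^2}^2$, which by the Kempf-Ness theorem (as used in Proposition~\ref{sec:orbit-type-strat-lower-dim}) is well defined on $\Z K^\C\sslash K^\C$ and equals the squared norm of the unique closed orbit in $\overline{xK^\C}$; then one checks directly that $\Z' K^\C=\tilde\pi^{-1}(\{m<(r')^2\})$, where $r'$ is the radius of $B'$, which is visibly saturated. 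Finally, since the image $\bar\theta(\Z' K^\C\times_{K^\C}\G^\C)$ is $\pi$-saturated as a subset of $U$ and lies inside the set $V$, which is $\pi$-saturated in all of $\B^{ss}$, a short diagram chase (alternatively Lemma~\ref{sec:pi-saturated-open-lemma-satura}) promotes this to $\pi$-saturation in $\B^{ss}$; openness and the fact that it contains $(A,\Phi)$ come from Proposition~\ref{sec:constr-moduli-space-local-slice} applied to $B'$.

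I expect the main obstacle to be this saturation step: verifying that $m$ descends to a well-defined function on the local quotient and that $\Z' K^\C$ is exactly a sublevel set of $m$, which is where the Kempf-Ness minimal-vector theory is essential. A secondary subtlety, easily overlooked, is that $\tilde\pi$-saturation is computed inside the local model $\Z K^\C$, whereas the conclusion concerns orbit closures in the full space $\B^{ss}$; nesting the image inside the globally $\pi$-saturated neighborhood $V$ is precisely what bridges this gap.
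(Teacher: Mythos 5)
Your proposal is correct and follows essentially the same route as the paper: shrink the Kuranishi ball inside a globally $\pi$-saturated neighborhood supplied by Proposition~\ref{sec:pi-saturated-open-invariant-open-contains-sat}, and reduce the saturation of the image to the saturation of $\Z'K^\C$ with respect to the local quotient $\Z K^\C\to\Z K^\C\sslash K^\C$. The only real divergence is that where you establish this last point directly via the Kempf--Ness minimal-norm function, the paper simply cites \cite[Corollary 4.9]{Snow1982} (and phrases the final bootstrap as a sequence argument using Lemma~\ref{sec:s-equiv-class-lemma-orbit-space} rather than your set-theoretic factorization through the injective Kuranishi map $\varphi$), so your argument in effect supplies a proof of the cited result.
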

\begin{proof}
  Let $U$ be the open neighborhood of $(A,\Phi)$ in $\B^{ss}$ provided by
  Proposition~\ref{sec:constr-moduli-space-kura-map-prop}. By
  Proposition~\ref{sec:pi-saturated-open-invariant-open-contains-sat}, let $U'$
  be an open neighborhood of $(A,\Phi)$ in $\B^{ss}$ that is $\pi$-saturated and
  contained in $U$. Then, there is a $K^\C$-invariant open neighborhood $Q$ of
  $0$ in $\bH^1$ such that $\bar{\theta}$ maps $(\Z K^\C\cap
  Q)\times_{K^\C}\G^\C$ into $U'$. Now, let $B'$ be an open ball around $0$ in
  $\bH^1$ such that $B'\subset B\cap Q$. Then, we see that
  \begin{equation}
    (B'\cap\Z)K^\C\subset(\Z\cap Q)K^\C\subset\Z K^\C\cap Q.
  \end{equation}
  Let $\Z'=B'\cap\nu_{0,\C}^{-1}(0)=B'\cap\Z$, and we have $\Z'\subset\Z$. Let
  $U''$ be the image of $\Z' K^\C\times_{K^\C}\G^\C$ under $\bar{\theta}$. Then
  \begin{equation}
    \bar{\theta}\colon\Z' K^\C\times_{K^\C}\G^\C\to U''
  \end{equation}
  is a homeomorphism, and $U''$ is contained in $U'$.

  We prove that $U''$ is $\pi$-saturated. Let $\theta(x)g\in U''$ for some
  $x\in\Z'$ and $g\in\G^\C$. By Lemma~\ref{sec:pi-saturated-open-lemma-satura},
  we need to show that the closure of $\theta(x)g\G^\C=\theta(x)\G^\C$ in
  $\B^{ss}$ is contained in $U''$. Let $g_n$ be a sequence in $\G^\C$ such that
  $\theta(x)g_n$ converges in $\B^{ss}$. Since $U'$ is $\pi$-saturated, the limiting point is
  in $U'$, and we may assume that it is $\theta(y)h$ for some $y\in\Z$ and
  $h\in\G^\C$. Since $\bar{\theta}$ is a homeomorphism, we see that $[x,g_n]$
  converges to $[y,h]$ in $\Z K^\C\times_{K^\C}\G^\C$. By
  Lemma~\ref{sec:s-equiv-class-lemma-orbit-space}, there is a subsequence
  $g_{n_j}\in\G^\C$ and a sequence $k_j\in K^\C$ such that
  $(xk_j^{-1},k_jg_{n_j})$ converges to $(y,h)$ in $\Z K^\C\times\G^\C$.
  By \cite[Corollary 4.9]{Snow1982}, $\Z'K^\C$ is saturated with respect
  to the quotient map $\Z K^\C\to\Z K^\C\sslash K^\C$. Hence $y\in\Z'$ so that
  $[y,h]\in\Z' K^\C\times_{K^\C}\G^\C$ and $\theta(y)h\in U''$.
\end{proof}

Now we can obtain Kuranishi local models for $\B^{ss}\sslash\G^\C$ in the
following way. Fix $[A,\Phi]_S\in\B^{ss}\sslash\G^\C$ such that
$(A,\Phi)\in\m^{-1}(0)$. By Theorem~\ref{sec:pi-saturated-open-slice-satura},
the natural map
\begin{equation}
  \bar{\theta}\colon\Z K^\C\times_{K^\C}\G^\C\to U
\end{equation}
is a homeomorphism onto an $\pi$-saturated open neighborhood $U$ of $(A,\Phi)$.
By the results in Section~\ref{sec:constr-moduli-space}, $\theta$ induces a
well-defined map
\begin{equation}
  \varphi\colon\Z K^\C\sslash K^\C\to\pi(U)\subset\B^{ss}\sslash\G^\C\qquad [x]\mapsto[\theta(x)]_S,
\end{equation}
and $\pi(U)$ is an open neighborhood of $[A,\Phi]_S$ in $\B^{ss}\sslash\G^\C$.
By Proposition~\ref{sec:s-equiv-class-moduli-equiv}, we see that it is a
biholomorphism.

Moreover, we can also describe the structure sheaf of $\M$ in the following way.
\begin{proposition}\label{sec:pi-saturated-open-struc-sheaf-M}
  The structure sheaf of $\M$ is equal to $\pi_*\sO_\B^{\G^\C}$. In other words,
  for any open subset $V$ in $\M$, the natural map
  $\pi^*\colon\sO(V)\mapsto\sO(\pi^{-1}V)^{\G^\C}$ is a bijection.
\end{proposition}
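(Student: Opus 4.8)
The plan is to reduce everything to the Kuranishi slice models and to the defining property of the analytic Hilbert quotient. Since both $\sO_\M$ and $\pi_*\sO_\B^{\G^\C}$ are sheaves on $\M$ and $\pi^*$ is a morphism between them, it suffices to verify that $\pi^*\colon\sO_\M(V)\to\sO_\B(\pi^{-1}V)^{\G^\C}$ is bijective for $V$ ranging over a basis of open sets. For the basis I would take the sets $V_0=\pi(U)$ produced by Theorem~\ref{sec:pi-saturated-open-slice-satura}, where $U$ is a $\pi$-saturated open neighborhood carrying the slice homeomorphism $\bar{\theta}\colon\Z K^\C\times_{K^\C}\G^\C\to U$ and $\varphi\colon\Z K^\C\sslash K^\C\to V_0$ is a biholomorphism. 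Saturation gives $U=\pi^{-1}(V_0)$, so over $V_0$ the map in question is exactly $\pi^*\colon\sO_\M(V_0)\to\sO_\B(U)^{\G^\C}$. Injectivity is immediate and in fact global: $\pi$ is surjective, so $f\circ\pi\equiv0$ forces $f\equiv0$.

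The key relation is the commuting square $\pi\circ\theta=\varphi\circ p$ on $\Z K^\C$, where $p\colon\Z K^\C\to\Z K^\C\sslash K^\C$ is the analytic Hilbert quotient, for which $\sO_{\Z K^\C\sslash K^\C}=p_*\sO_{\Z K^\C}^{K^\C}$ by definition. I would study the pullback along $\theta$,
\begin{equation*}
  \theta^*\colon\sO_\B(U)^{\G^\C}\to\sO_{\Z K^\C}\bigl(p^{-1}\varphi^{-1}V_0\bigr)^{K^\C},\qquad F\mapsto F\circ\theta,
\end{equation*}
and show it is a well-defined bijection. It is well-defined because $\theta\colon BK^\C\to\sC$ is $I$-holomorphic and $\Z K^\C$ is a closed complex subspace of $BK^\C$ (Proposition~\ref{sec:constr-moduli-space-kura-map-prop}): composing the local $I$-holomorphic representatives of $F$ with $\theta$ and restricting yields a section of $\sO_{\Z K^\C}$, which is $K^\C$-invariant since $F$ is $\G^\C$-invariant and $\theta$ is $K^\C$-equivariant. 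For injectivity, the slice theorem gives $U=\theta(\Z)\,\G^\C$, so a $\G^\C$-invariant $F$ is determined by $F\circ\theta|_{\Z}$, hence by $\theta^*F$.

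Granting that $\pi^*$ lands in $\sO_\B(U)^{\G^\C}$, surjectivity then follows formally. Given $F\in\sO_\B(U)^{\G^\C}$, the invariant $\theta^*F$ descends through $p$ to a unique $\hat{g}\in\sO_{\Z K^\C\sslash K^\C}(\varphi^{-1}V_0)$; put $g=\hat{g}\circ\varphi^{-1}\in\sO_\M(V_0)$. Then $\theta^*(\pi^*g)=g\circ\pi\circ\theta=g\circ\varphi\circ p=\hat{g}\circ p=\theta^*F$, so injectivity of $\theta^*$ yields $\pi^*g=F$. Thus $\pi^*$ is bijective over each $V_0$, and the sheaf axioms together with the cover of $\M$ by such $V_0$ promote this to a bijection for every open $V$.

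The step I expect to be the main obstacle is precisely the well-definedness of $\pi^*$, namely that for $f\in\sO_\M(V_0)$ the $\G^\C$-invariant function $f\circ\pi$ belongs to the naive structure sheaf $\sO_\B$, i.e.\ extends locally to an $I$-holomorphic function on open subsets of $\sC$ rather than only on $\B$. I would resolve this by enlarging the slice to all of $B$: the map $(x,g)\mapsto\theta(x)g$ is $I$-holomorphic and models a neighborhood of $(A,\Phi)$ in $\sC$, inside which $\B$ is cut out by the $\Z$-directions. Since $\Z K^\C\sslash K^\C$ is a closed complex subspace of $BK^\C\sslash K^\C$, the descended function $\hat{g}$ extends holomorphically to a neighborhood of each point in $BK^\C\sslash K^\C$; pulling such an extension back to $BK^\C$ and transporting it through the $I$-holomorphic slice model produces the required local $I$-holomorphic extension of $f\circ\pi$ off $\B$. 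All the remaining points---continuity of the descended functions and compatibility of the Kuranishi charts on overlaps---are formal consequences of $\varphi$ being a biholomorphism and of the sheaf property, so the entire analytic content sits in matching the ambient sheaf $\sO_\B$ with the finite-dimensional Kuranishi model via the $I$-holomorphicity of $\theta$ and of the $\G^\C$-action.
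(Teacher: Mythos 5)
Your proposal is correct and follows essentially the same route as the paper: reduce to the $\pi$-saturated slice neighborhoods of Theorem~\ref{sec:pi-saturated-open-slice-satura}, use the analytic Hilbert quotient property $\sO_{\Z K^\C\sslash K^\C}=p_*\sO_{\Z K^\C}^{K^\C}$ together with the biholomorphisms $\varphi$ and $\bar{\theta}$, and identify the resulting composite with $\pi^*$. The only difference is presentational: the paper packages this as a chain of isomorphisms and asserts that $\bar{\theta}$ is a biholomorphism ``by the definition of the structure sheaf of $\B$,'' whereas you unwind the same chain into explicit injectivity/surjectivity checks and spell out the extension argument off $\B$ that the paper leaves implicit.
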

\begin{proof}
  It suffices to prove the following. Let $(A,\Phi)\in\m^{-1}(0)$. By
  Theorem~\ref{sec:pi-saturated-open-slice-satura} and the remark after it, we
  see that the natural map
  \begin{equation}
    \bar{\theta}\colon\Z K^\C\times_{K^\C}\G^\C\to U
  \end{equation}
  is a homeomorphism onto an $\pi$-saturated open neighborhood of $(A,\Phi)$ in
  $\B^{ss}$. Moreover, it induces a biholomorphic map $\varphi\colon\Z
  K^\C\sslash K^\C\to\pi(U)$. By the definition of the structure sheaf of $\B$,
  we easily see that $\bar{\theta}$ is actually a biholomorphism. As a
  consequence, there is a chain of isomorphisms
  \begin{equation}
    \sO(\pi(U))\xrightarrow{\varphi^*}\sO(\Z K^\C\sslash K^\C)\xrightarrow{\pi^*}\sO(\Z K^\C)^{K^\C}\xrightarrow{\sim}\sO(\Z K^\C\times_{K^\C}\G^\C)^{\G^\C}\xrightarrow{\bar{\theta}^{-1}}\sO(U)^{\G^\C}.
  \end{equation}
  Moreover, the composition is exactly
  $\pi^*\colon\sO(\pi(U))\to\sO(U)^{\G^\C}$. 
\end{proof}
As a corollary, the quotient map $\pi\colon\B^{ss}\to\B^{ss}\sslash\G^\C$ is a
categorical quotient in the following sense.
\begin{corollary}\label{sec:pi-saturated-open-cate-quo}
  Let $Z$ be a complex space and $g\colon\B^{ss}\to Z$ a $\G^\C$-invariant
  holomorphic map. Then, $g$ induces a unique holomorphic map
  $\bar{g}\colon\M\to Z$.
\end{corollary}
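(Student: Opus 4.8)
The plan is to build $\bar g$ in three stages: first as a set map, then as a continuous map, and finally as a holomorphic map, with the real content concentrated in the last stage. For the set-theoretic descent, recall that $\M\cong\B^{ss}\sslash\G^\C$ by Proposition~\ref{sec:s-equiv-class-moduli-equiv}, so the fibers of $\pi$ are precisely the $S$-equivalence classes. Since $g$ is holomorphic, it is in particular continuous, and since it is $\G^\C$-invariant it is constant on each $\G^\C$-orbit; a standard limiting argument (using that $\B^{ss}$ is metrizable in the $L^2_k$-topology, so sequences suffice) then shows $g$ is constant on the closure in $\B^{ss}$ of each orbit. If $(A_1,\Phi_1)$ and $(A_2,\Phi_2)$ are $S$-equivalent, then by Proposition~\ref{sec:s-equiv-class-s-equiv-closure-orbit} the closures $\overline{(A_1,\Phi_1)\G^\C}$ and $\overline{(A_2,\Phi_2)\G^\C}$ meet at a common point, at which $g$ takes a single value; hence $g(A_1,\Phi_1)=g(A_2,\Phi_2)$. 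Thus $g$ is constant on the fibers of $\pi$ and factors uniquely as $g=\bar g\circ\pi$ on the level of sets. Uniqueness of $\bar g$ is immediate from the surjectivity of $\pi$.

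For continuity, I would simply invoke that $\pi\colon\B^{ss}\to\M$ is a topological quotient map, which holds because $\M$ carries the quotient topology by its very construction as $\B^{ss}\sslash\G^\C$. Since $g=\bar g\circ\pi$ is continuous and $\pi$ is a quotient map, $\bar g$ is continuous by the universal property of the quotient topology.

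The essential step is holomorphy of $\bar g$, and here the key tool is Proposition~\ref{sec:pi-saturated-open-struc-sheaf-M}, namely $\sO_\M=\pi_*\sO_{\B^{ss}}^{\G^\C}$. This is a local question on $\M$. Fix $x\in\M$ and a $\pi$-saturated open neighborhood of a representative of $x$ provided by Theorem~\ref{sec:pi-saturated-open-slice-satura}, descending to an open $V\subset\M$. Because $Z$ is reduced, I would choose a local holomorphic embedding of a neighborhood of $\bar g(x)$ in $Z$ into some $\C^N$ with coordinate functions $z_1,\dots,z_N$; then $\bar g$ is holomorphic near $x$ precisely when each composite $z_i\circ\bar g$ is holomorphic. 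Now $z_i\circ g=\pi^*(z_i\circ\bar g)$ is a $\G^\C$-invariant holomorphic function on $\pi^{-1}(V)$ (the pullback of $z_i$ under the holomorphic map $g$), so by Proposition~\ref{sec:pi-saturated-open-struc-sheaf-M} there is a holomorphic $f_i\in\sO_\M(V)$ with $\pi^*f_i=z_i\circ g$; the injectivity of $\pi^*$ forces $f_i=z_i\circ\bar g$, whence $z_i\circ\bar g\in\sO_\M(V)$. Since each coordinate composite is holomorphic, $\bar g$ is holomorphic near $x$, and as $x$ was arbitrary, $\bar g$ is holomorphic.

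I expect the holomorphy step to be the only genuine obstacle, and it is essentially equivalent to having Proposition~\ref{sec:pi-saturated-open-struc-sheaf-M} in hand: the reduction to a local embedding $Z\hookrightarrow\C^N$ and the identification of coordinate pullbacks as invariant holomorphic functions on $\B^{ss}$ are routine once the identity $\sO_\M=\pi_*\sO_{\B^{ss}}^{\G^\C}$ is available. The set-theoretic and topological descents are formal consequences of $\G^\C$-invariance, continuity, and the fact that $\pi$ is a quotient map identifying $S$-equivalent orbits.
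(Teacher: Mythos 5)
Your proposal is correct and follows essentially the same route as the paper: well-definedness of $\bar g$ via Proposition~\ref{sec:s-equiv-class-s-equiv-closure-orbit} (orbit closures of $S$-equivalent bundles meet, and a continuous invariant map is constant on orbit closures), and holomorphy via the identity $\sO_\M=\pi_*\sO_{\B^{ss}}^{\G^\C}$ of Proposition~\ref{sec:pi-saturated-open-struc-sheaf-M}. The paper's proof is just a two-line version of this; your expansion (continuity from the quotient topology, local embedding of $Z$ into $\C^N$, injectivity of $\pi^*$) supplies exactly the routine details the paper leaves implicit.
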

\begin{proof}
  Define $\bar{g}[A,\Phi]_S=g(A,\Phi)$. By
  Proposition~\ref{sec:s-equiv-class-s-equiv-closure-orbit}, it is well-defined.
  The holomorphicity of $\bar{f}$ follows from
  Proposition~\ref{sec:pi-saturated-open-struc-sheaf-M}.
\end{proof}

\begin{proof}[Proof of Theorem~\ref{sec:introduction-infi-dim-GIT}]
  This follows from
  Proposition~\ref{sec:s-equiv-class-s-equiv-closure-orbit},~\ref{sec:s-equiv-class-closedness-polystability},
  ~\ref{sec:s-equiv-class-unique-closed-orbit},
  ~\ref{sec:pi-saturated-open-struc-sheaf-M} and
  Corollary~\ref{sec:pi-saturated-open-cate-quo}.
\end{proof}

\section{Descent lemmas for vector bundles}\label{sec:desc-lemm-vect}
In this section, we will first generalize the descent lemma for vector bundles
in \cite[Theorem 2.3]{Drezet1989} to analytic Hilbert quotients, and then
prove a similar descent lemma for the quotient map
$\pi\colon\B^{ss}\to\B^{ss}\sslash\G^\C$.

Let $G$ be a complex reductive Lie group acting holomorphically on a complex
space $X$. Suppose that $X$ admits an analytic Hilbert quotient. In other words,
there is a surjective $G$-invariant holomorphic map $\pi\colon X\to X\sslash G$
such that
\begin{enumerate}
\item $\pi$ is Stein in the sense that inverse images of Stein subspaces are
  Stein.
  
\item $\sO_{X\sslash G}=\pi_*\sO_X^G$. In other words, for every open subset $U$
  of $X\sslash G$, the map $\pi^*\colon\sO_{X\sslash
    G}(U)\to\sO_{X}(\pi^{-1}U)^G$ is an isomorphism.
\end{enumerate}
\begin{proposition}\label{sec:desc-lemm-vect-descent-analytic-quotient}
  Let $E\to X$ be a holomorphic $G$-bundle over $X$. If $G_x$ acts
  trivially on the fiber $E_x$ for every $x\in X$ whose $G$-orbit is closed,
  then there is a vector bundle $F\to X\sslash G$ such that $\pi^*F=E$.
  Moreover, $\sO(F)=\pi_*\sO(E)^G$, where $\sO(F)$ and $\sO(E)$ are the sheaves
  of holomorphic sections of $F$ and $E$, respectively.
\end{proposition}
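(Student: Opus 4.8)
The plan is to prove that the $\sO_{X\sslash G}$-module $\F:=\pi_*\sO(E)^G$ is locally free and that the natural evaluation map $\pi^*\F\to E$ is an isomorphism; the bundle $F$ is then $\F$ viewed as a vector bundle, and $\sO(F)=\pi_*\sO(E)^G$ holds by construction. Since local freeness and the isomorphism $\pi^*\F\to E$ are local questions on $X\sslash G$, I would fix a point $z\in X\sslash G$ and work near the unique $G$-orbit $Gx$ that is closed in $\pi^{-1}(z)$. Because $Gx$ is closed, its stabilizer $H=G_x$ is reductive, and I would invoke the slice theorem for reductive actions on Stein spaces (\cite{Snow1982}, cf.\ \cite{Heinzner1998}) to obtain a saturated Stein neighborhood $\Omega=\pi^{-1}(V)$ of $x$ that is $G$-equivariantly biholomorphic to an associated bundle $G\times_H S$, where $S$ is an $H$-invariant locally closed complex subspace through $x$ and $V\cong S\sslash H$. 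Under this identification, $G$-bundles on $\Omega$ correspond to $H$-bundles on $S$ and $G$-invariant sections on $\Omega$ correspond to $H$-invariant sections on $S$, so the problem reduces to the local model in which the reductive group $H$ fixes $x$ and, by hypothesis, acts trivially on the fiber $(E|_S)_x=E_x$.

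The heart of the argument is the construction of an $H$-invariant holomorphic frame of $E|_S$ near $x$. Since $H$ fixes $x$, it acts on the stalk of germs of holomorphic sections of $E|_S$ at $x$, and evaluation at $x$ is an $H$-equivariant surjection onto $E_x$. Using that $H$ is reductive, one averages germs over a maximal compact subgroup (the averaged sections are smooth and $H$-invariant, hence holomorphic because the action is holomorphic) to project onto the $H$-invariant part; because $H$ acts trivially on $E_x$, this projection still surjects onto $E_x$. Choosing germs that lift a basis $e_1,\dots,e_r$ of $E_x$ and projecting yields $H$-invariant germs $s_1,\dots,s_r$ with $s_i(x)=e_i$, which therefore form a frame in an ordinary neighborhood of $x$.

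The main obstacle is upgrading this to a frame over a full \emph{saturated} neighborhood: a priori the $s_i$ frame $E|_S$ only near $x$, not along the nearby non-closed orbits meeting $\pi^{-1}(V)$. To handle this I would consider the degeneracy locus $D$ on which $s_1\wedge\cdots\wedge s_r$ vanishes; this is the zero set of an $H$-invariant holomorphic section of $\det(E|_S)$, hence an $H$-invariant analytic subset with $x\notin D$. Because an analytic Hilbert quotient map sends closed invariant analytic subsets to closed analytic subsets (\cite{Heinzner1998}), $\pi(D)$ is closed in $S\sslash H$ and omits $\pi(x)$. Shrinking $V$ so that $\pi^{-1}(V)$ avoids $D$ produces a saturated neighborhood over which $s_1,\dots,s_r$ is an $H$-invariant frame, which after inducing up along $G\times_H S$ becomes a $G$-invariant frame of $E$ over the saturated set $\Omega$.

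Over this saturated neighborhood the space of invariant sections $\sO(E)(\Omega)^G$ is a free $\sO_{X\sslash G}(V)$-module with basis $s_1,\dots,s_r$, so $\F=\pi_*\sO(E)^G$ is locally free of rank $r$ and hence defines a vector bundle $F$ over $X\sslash G$. The same invariant frame simultaneously trivializes $E$ over $\Omega$, so the evaluation map $\pi^*F\to E$ carries a local frame of $\pi^*F$ to a frame of $E$ and is therefore an isomorphism; the identity $\sO(F)=\pi_*\sO(E)^G$ is then immediate from the definition of $\F$. Since these constructions are canonical (they all recover the globally defined sheaf $\F$), no gluing issues arise, completing the descent.
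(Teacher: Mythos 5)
Your overall strategy is the same as the paper's at its core: produce $G$-equivariant holomorphic sections of $E$ that frame the fiber over the closed orbit, upgrade $K$-invariance to $G$-equivariance by averaging over a maximal compact subgroup together with an identity-theorem argument, and then use the fact that analytic Hilbert quotients map closed invariant analytic sets to closed sets in order to saturate the neighborhood on which the frame is nondegenerate. The packaging differs in two harmless ways: you first pass through a slice theorem $\Omega\cong G\times_H S$, whereas the paper works directly on a Stein saturated neighborhood $\pi^{-1}(U)$ and constructs the sections on the closed orbit $Gx\cong G/G_x$ itself (where the triviality of the $G_x$-action on $E_x$ makes $s_i(gG_x)=g\sigma_i$ well defined); and you phrase the conclusion as local freeness of $\pi_*\sO(E)^G$ rather than descending transition functions. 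Your degeneracy-locus argument is essentially the same mechanism as the paper's appeal to \cite[Proposition 3.10]{Mayrand2018} (a $G$-invariant open neighborhood of a closed orbit contains a $\pi$-saturated one), since the fiber over $\pi(x)$ consists of orbits whose closures meet $Gx$, hence lies entirely in the complement of any closed invariant set missing $Gx$.

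There is, however, one genuine gap in the middle of your argument: you construct the $s_i$ only as \emph{germs} at $x$ and immediately treat the degeneracy locus $D=\{s_1\wedge\cdots\wedge s_r=0\}$ as a closed $H$-invariant analytic subset of $S$ to which the image theorem for analytic Hilbert quotients applies. As written, the $s_i$ are defined only on some small (at best $K_H$-invariant, certainly not $\pi$-saturated) neighborhood $N$ of $x$, so $D$ is closed only in $N$; the statement ``$\pi(D)$ is closed in $S\sslash H$'' does not follow, and the complement of $D$ need not contain any saturated neighborhood. What is missing is the extension step: before averaging, you must realize the prescribed values at $x$ by sections defined on \emph{all} of the saturated Stein set ($S$, or $\pi^{-1}(U)$), using Steinness — either Cartan's Theorem A (the stalk of $\sO(E|_S)$ at $x$ is generated by global sections) or, as the paper does, extension of the equivariant sections from the closed complex subspace $Gx\subset\pi^{-1}(U)$. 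Only then is the averaged frame globally defined and $G$-equivariant on a saturated set, $D$ genuinely closed and invariant, and your final saturation and local-freeness argument valid. The fix is standard, but without it the step ``shrinking $V$ so that $\pi^{-1}(V)$ avoids $D$'' has no justification.
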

\begin{proof}
  We closely follow the proof of \cite[Theorem 2.3]{Drezet1989}. Fix $x\in
  X$ such that $Gx$ is closed in $X$. Choose a basis $\sigma_1,\cdots,\sigma_r$
  for $E_x$. Then, we may consider the map
  \begin{equation}
    s_i\colon G/G_x\to E\qquad s_i(gG_x)=g\sigma_i.
  \end{equation}
  Since $G_x$ acts trivially on $E_x$, $s_i$ is well-defined and holomorphic.
  Now, choose an open Stein neighborhood $U$ of $\pi(x)$. Since $\pi$ is an
  analytic Hilbert quotient, $\pi^{-1}(U)$ is an open Stein neighborhood
  containing $Gx$. Since $Gx$ is closed in $\pi^{-1}(U)$, by \cite[Proposition
  3.1.1]{Heinzner1999}, $Gx$ is a closed complex subspace of
  $\pi^{-1}(U)$. Since $G$ acts transitively on $Gx$, $Gx$ is smooth. Therefore,
  the natural map $G/G_x\to Gx$ is a biholomorphism. As a consequence, we obtain
  a $G$-equivariant map
  \begin{equation}
    s_i\colon Gx\to E\qquad s_i(gx)=g\sigma_i,
  \end{equation}
  which is a holomorphic section of $E$ over $Gx$. Since $Gx$ is a closed
  complex subspace of $\pi^{-1}(U)$, and $\pi^{-1}(U)$ is Stein, $s_i$ can be
  extended to a holomorphic section of $E$ over $\pi^{-1}(U)$. By averaging over
  a maximal compact subgroup $K$ of $G$, we may assume that each $s_i$ is
  $K$-equivariant. Since $G$ is the complexification of $K$, the argument in the
  proof of \cite[Theorem 1.1]{Roberts1986} shows that each $s_i$ is also
  $G$-equivariant. Since $G_x$ acts trivially on $E_x$, $s_i(x)=\sigma_i$, and
  hence $\{s_i\}$ is linearly independent over an open neighborhood $V$ of $x$
  in $\pi^{-1}(U)$. Since each $s_i$ is $G$-equivariant, $V$ can be chosen to be
  $G$-invariant. By \cite[Proposition 3.10]{Mayrand2018}, we may further
  assume that $V=\pi^{-1}(U')$ for some smaller open neighborhood $U'\subset U$
  of $\pi(x)$ in $\M$.

  Now, note that every fiber of $\pi$ contains a unique closed orbit
  (\cite[\S3, Corollary 3]{Heinzner1998}). Therefore, the argument in
  the above paragraph provides an open covering  $U_i$ of $X\sslash G$ such that
  $E$ is trivial over $\pi^{-1}(U_i)$, and the transition functions
  $g_{ij}\colon\pi^{-1}(U_i)\cap\pi^{-1}(U_j)\to GL_n(\C)$ are $G$-invariant. As
  a consequence, by the definition of the structure sheaf of $X\sslash G$, they
  descend to holomorphic functions
  $\tilde{g}_{ij}\colon\pi(\pi^{-1}(U_i)\cap\pi^{-1}(U_j))\to GL_n(\C)$. Since
  every fiber of $\pi$ contains a unique closed orbit,
  $\pi(\pi^{-1}(U_i)\cap\pi^{-1}(U_j))=U_i\cap U_j$. Then, the data
  $\{\tilde{g}_{ij},U_i\}$ defines a holomorphic vector bundle $F$ over
  $X\sslash G$. It is easy to see that $\pi^*F=E$ and $\sO(F)=\pi_*\sO(E)^G$.
\end{proof}

\begin{proof}[Proof of Theorem~\ref{sec:introduction-descent-lemma}]
  Fix $(A,\Phi)\in\m^{-1}(0)$. By
  Theorem~\ref{sec:pi-saturated-open-slice-satura}, the map 
  \begin{equation}
    \bar{\theta}\colon\Z K^\C\times_{K^\C}\G^\C\to\B^{ss}
  \end{equation}
  induced by the Kuranishi map $\theta\colon\Z K^\C\to\B^{ss}$ for $(A,\Phi)$ is
  a $\G^\C$-equivariant homeomorphism onto a $\pi$-saturated open neighborhood
  of $(A,\Phi)$ in $\B^{ss}$. Then, we consider the pullback bundle
  $\bar{\theta}^*\bL$ on $\Z K^\C\times_{K^\C}\G^\C$. Clearly, $\theta^*\bL$ is
  the restriction of $\bar{\theta}^*\bL$ to $\Z K^\C$. By
  Proposition~\ref{sec:constr-moduli-space-kura-map-prop}, if $x\in\Z$ has a
  closed $K^\C$-orbit, then $\theta(x)$ is polystable, and
  $(K^\C)_x=(\G^\C)_{\theta(x)}$. Therefore, $(K^\C)_x$ acts trivially on the
  fiber $(\theta^*\bL)_x=\bL_{\theta(x)}$ for every $x\in\Z$ that has a closed
  $K^\C$-orbit. By
  Proposition~\ref{sec:desc-lemm-vect-descent-analytic-quotient}, the bundle
  $\theta^*\bL$ descends to $\Z K^\C\sslash K^\C$. By shrinking $\Z$ if
  necessary, we may assume that the descended bundle is trivial over $\Z
  K^\C\sslash K^\C$. As a consequence, there is a holomorphic frame
  $\{\sigma_i\}$ for $\theta^*\bL$ over $\Z K^\C$ such that each $\sigma_i$ is
  $K^\C$-equivariant. Hence, each section $\sigma_i$ extends to a
  $\G^\C$-equivariant holomorphic section of $\bar{\theta}^*\bL$ over $\Z
  K^\C\times_{K^\C}\G^\C$. Transported back to $\B^{ss}$ by $\bar{\theta}$, we
  obtain a local frame $\{\sigma_i\}$ for $\bL\to\B^{ss}$ such that each
  $\sigma_i$ is a $\G^\C$-equivariant holomorphic section over a $\pi$-saturated
  open neighborhood of $(A,\Phi)$ in $\B^{ss}$. The rest follows from the second
  paragraph in the proof of
  Proposition~\ref{sec:desc-lemm-vect-descent-analytic-quotient}.
\end{proof}

\section{The K\"ahler metric on the moduli space}\label{sec:kahler-metric-moduli}
In this section, we will prove
Theorem~\ref{sec:introduction-singular-Kahler-metric}. Let us start with the
construction of a line bundle on the moduli space $\M$. By
\cite{Donaldson1987a}, there is a holomorphic Hermitian line bundle $\bL$
over $\A$ such that the curvature of the Hermitian metric is precisely
$-2\pi\sqrt{-1}\Omega_1$, where
\begin{equation}
  \Omega_1(\alpha_1,\alpha_2)=\frac{1}{4\pi^2}\int_X\tr(\alpha_1\wedge\alpha_2),\qquad\alpha_1,\alpha_2\in\Omega^1(\g_E).
\end{equation}
Moreover, the $\G^\C$-action on $\A$ lifts to $\bL$, and the $\G$-action
preserves the Hermitian metric. The vertical part of the infinitesimal action of
$\xi\in\Omega^0(\g_E)$ on a smooth section $s$ of $\bL$ is given by
$2\pi\sqrt{-1}\langle\mu_1,\xi\rangle s$, where $\xi^\#$ is the vector field on
$\A$ generated by $\xi$, and
\begin{equation}
  \mu_1(A)=\frac{1}{4\pi^2}F_A.
\end{equation}
On the other hand, we consider the trivial line bundle
$\Omega^{1,0}(\g_E^\C)\times\C$ over $\Omega^{1,0}(\g_E^\C)$. A K\"ahler
potential on $\Omega^{1,0}(\g_E^\C)$ is given by
\begin{equation}
  \rho(\Phi)=\frac{1}{8\pi^2}\|\Phi\|_{L^2}^2.
\end{equation}
Letting $\Omega_2=\sqrt{-1}\partial\bar{\partial}\rho$, we see that
$\Omega_1+\Omega_2=\Omega_I$ on $\sC$. Let $s(\Phi)=(\Phi,1)$ be the canonical
section of the trivial line bundle $\Omega^{1,0}(\g_E^\C)\times\C$. Setting
\begin{equation}
  |s|^2=\exp(-2\pi\rho),
\end{equation}
we obtain a Hermitian metric on the trivial line bundle
$\Omega^{1,0}(\g_E^\C)\times\C$ such that its curvature is
$-2\pi\sqrt{-1}\Omega_2$. Letting $\G^\C$ act on $\sC$ trivially, we see that
the $\G^\C$-action on $\Omega^{1,0}(\g_E^\C)$ lifts to the trivial line bundle
$\Omega^{1,0}(\g_E^\C)\times\C$. Moreover, the induced $\G$-action preserves the
Hermitian metric, since $\rho$ is $\G$-invariant. Finally, the vertical part of
the infinitesimal action of $\xi\in\Omega^0(\g_E)$ on $s$ is given by
$2\pi\sqrt{-1}\langle\mu_2,\xi\rangle s$, where
\begin{equation}
  \mu_2(\Phi)=\frac{1}{4\pi^2}[\Phi,\Phi^*].
\end{equation}

Now, we pullback the line bundle $\bL$ on $\A$ and the trivial line bundle on
$\Omega^{1,0}(\g_E^\C)$ to $\sC=\A\times\Omega^{1,0}(\g_E^\C)$, and denote the
resulting line bundle still by $\bL$. We equip $\bL$ with the product of the
pullback Hermitian metrics. As a consequence, the $\G^\C$-action on $\sC$ lifts
to $\bL$, and the $\G$-action still preserves the resulting Hermitian metric.
The curvature of this Hermitian metric is precisely
\begin{equation}
  -2\pi\sqrt{-1}(\Omega_1+\Omega_2)=-2\pi\sqrt{-1}\Omega_I.
\end{equation}
Moreover, the vertical part of the infinitesimal action of
$\xi\in\Omega^0(\g_E)$ on a smooth section $s$ of $\bL$ is given by
$2\pi\sqrt{-1}\langle\mu_1+\mu_2,\xi\rangle s$, and $\mu_1+\mu_2=\mu$ on $\sC$.
The following shows that the restriction of the line bundle $\bL$ to $\B^{ss}$
descends to the moduli space $\M$.
\begin{proposition}\label{sec:kahler-metric-moduli-line-bundle-descent}
  There is a line bundle $\sL\to \M$ such that $\pi^*\sL=\bL|_{B^{ss}}$, and
  $\sO(\sL)=\pi_*\sO(\bL|_{\B^{ss}})^{\G^\C}$.
\end{proposition}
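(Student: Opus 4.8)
The plan is to deduce this directly from the descent lemma, Theorem~\ref{sec:introduction-descent-lemma}, applied to the holomorphic $\G^\C$-bundle $\bL|_{\B^{ss}}$. The $\G^\C$-action on $\sC$ lifts to $\bL$ by construction, so $\bL|_{\B^{ss}}$ is indeed a holomorphic $\G^\C$-bundle, and the entire content of the proof is the verification of the single hypothesis of that theorem: that the stabilizer $\G^\C_{(A,\Phi)}$ acts trivially on the fiber $\bL_{(A,\Phi)}$ for every $(A,\Phi)\in\mu^{-1}(0)$ lying in $\B^{ss}$. Since $\mu_\C$ vanishes identically on $\B$, such points are precisely the points of $\m^{-1}(0)$, which are polystable; by Proposition~\ref{sec:orbit-type-strat-finitely-many} their stabilizers $\G^\C_{(A,\Phi)}=K^\C$ are connected.

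Fix such an $(A,\Phi)$. As $K^\C$ fixes $(A,\Phi)$, it acts linearly on the one-dimensional fiber $\bL_{(A,\Phi)}$ through a holomorphic character $\chi\colon K^\C\to\C^*$, and because $K^\C$ is connected it suffices to show that the differential $d\chi\colon\Lie K^\C\to\C$ vanishes. I would compute $d\chi$ from the infinitesimal action formula recorded above: the vertical part of the infinitesimal action of $\xi\in\Omega^0(\g_E)$ on a section of $\bL$ is $2\pi\sqrt{-1}\langle\mu,\xi\rangle$. For $\xi$ in the Lie algebra $\Lie K\subset\Omega^0(\g_E)$ of the (compact) stabilizer, the vector field $\xi^\#$ vanishes at the fixed point $(A,\Phi)$, so the infinitesimal action there is purely vertical; hence $d\chi(\xi)$ equals, up to sign, $2\pi\sqrt{-1}\langle\mu(A,\Phi),\xi\rangle=0$. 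Thus $d\chi$ vanishes on $\Lie K$.

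It remains to pass from the compact directions $\Lie K$ to all of $\Lie K^\C=\Lie K\oplus\sqrt{-1}\,\Lie K$. Here I would use that $\chi$ is holomorphic, so $d\chi$ is complex-linear; therefore $d\chi(\sqrt{-1}\,\xi)=\sqrt{-1}\,d\chi(\xi)=0$ as well, and $d\chi\equiv0$ on $\Lie K^\C$. Connectedness of $K^\C$ then forces $\chi\equiv1$, so $K^\C$ acts trivially on $\bL_{(A,\Phi)}$ and the hypothesis of Theorem~\ref{sec:introduction-descent-lemma} holds. That theorem then yields a line bundle $\sL\to\M$ with $\pi^*\sL=\bL|_{\B^{ss}}$ and $\sO(\sL)=\pi_*\sO(\bL|_{\B^{ss}})^{\G^\C}$, which is the assertion.

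The one genuinely delicate point is the triviality of $\chi$ along the non-compact directions $\sqrt{-1}\,\Lie K$ of the stabilizer: the moment-map computation only directly controls the compact directions $\Lie K$, where the fixed-point condition kills the horizontal part and $\mu=0$ kills the vertical part. Bridging to the complex directions relies on two facts working together, namely the connectedness of $K^\C$, which comes from polystability via Proposition~\ref{sec:orbit-type-strat-finitely-many}, and the holomorphy of the $\G^\C$-action, hence the complex-linearity of $d\chi$. Were the stabilizer disconnected, vanishing of $d\chi$ would not suffice to conclude $\chi\equiv1$.
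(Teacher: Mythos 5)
Your proposal is correct and follows essentially the same route as the paper: both reduce the statement to the hypothesis of Theorem~\ref{sec:introduction-descent-lemma}, kill the infinitesimal action on the fiber via $2\pi\sqrt{-1}\langle\mu,\xi\rangle s=0$ on $\mu^{-1}(0)$, and then pass from the compact stabilizer to its complexification using connectedness. The paper phrases the last step as ``$(\G^\C)_{(A,\Phi)}$ is the complexification of the connected group $\G_{(A,\Phi)}$, which already acts trivially,'' whereas you phrase it as complex-linearity of $d\chi$ for the holomorphic character; these are the same argument.
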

\begin{proof}
  We follow the proof of \cite[Proposition 2.14]{Sjamaar1995}. By
  Theorem~\ref{sec:introduction-descent-lemma}, it suffices to prove that
  $\G^\C_{(A,\Phi)}$ acts on $\bL_{(A,\Phi)}$ trivially for every
  $(A,\Phi)\in\m^{-1}(0)$. If $\xi\in\Lie(\G_{(A,\Phi)})$ and
  $s\in\bL_{(A,\Phi)}$, then
  \begin{equation}
    \xi\cdot s=2\pi\sqrt{-1}\langle\mu,\xi\rangle s.
  \end{equation}
  Therefore, if $\mu(A,\Phi)=0$, then $\xi\cdot s=0$. Since $\G$-stabilizers are
  connected (Proposition~\ref{sec:orbit-type-strat-finitely-many}), we conclude
  that $\G_{(A,\Phi)}$ acts trivially on $\bL_{(A,\Phi)}$. Since
  $(\G^\C)_{(A,\Phi)}$ is the complexification of $\G_{(A,\Phi)}$, we conclude
  that $\G^\C_{(A,\Phi)}$ acts trivially on $\bL_{(A,\Phi)}$.
\end{proof}

Now, we show that the moduli space $\M$ admits a weak K\"ahler metric. Let
$Q$ be a stratum in the orbit type stratification of $\M$. By
Proposition~\ref{sec:orbit-type-strat-Kob-Hit-preser}, $i^{-1}(Q)$ is a stratum
in the orbit type stratification of $\m^{-1}(0)/\G$, where
$i\colon\m^{-1}(0)/\G\xrightarrow{\sim}\B^{ps}/\G^\C$ is the Hitchin-Kobayashi
correspondence. By Proposition~\ref{sec:orbit-type-strat-main-results},
$i^{-1}(Q)$ is a hyperK\"ahler manifold. Let $\omega_{i^{-1}Q}$ be the K\"ahler
form on $i^{-1}(Q)$ induced by the K\"ahler form $\Omega_I$ on $\sC$. Then,
$(i^{-1})^*\omega_{i^{-1}Q}$ is a K\"ahler form on $Q$. Therefore, every stratum
in $\M$ is a K\"ahler manifold.

Let $[A,\Phi]_S\in\M$. By
Proposition~\ref{sec:kahler-metric-moduli-line-bundle-descent}, we may choose a
$\G^\C$-equivariant holomorphic section $s$ of $\bL$ over an $\pi$-saturated
open neighborhood $\pi^{-1}(U)$ of $(A,\Phi)$ in $\B^{ss}$ such that $s$
vanishes nowhere in $\pi^{-1}(U)$, where $U$ is an open neighborhood of
$[A,\Phi]_S$ in $\M$. Then, we define
\begin{equation}
  u=-\frac{1}{2\pi}\log|s|_h^2,
\end{equation}
where $h$ is the Hermitian metric on $\bL$. Since $h$ is preserved by the
$\G$-action, $u$ is $\G$-invariant. As a consequence, the restriction of $u$ to
$\pi^{-1}(U)\cap\m^{-1}(0)$ induces a well-defined continuous function
$u_0\colon U\to\R$.
\begin{proposition}\label{sec:kahler-metric-moduli-pluri-on-stratum}
  The function $u_0$ is continuous and smooth along each stratum $Q$. Moreover,
  $u_0|_Q$ is a K\"ahler potential for the K\"ahler form on each stratum $Q$ in
  $\M$. In particular, $u_0$ is a continuous plurisubharmonic function.
\end{proposition}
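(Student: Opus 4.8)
The plan is to reduce everything to a curvature computation on $\sC$ together with a Kempf--Ness argument identifying $u_0$ with a reduced K\"ahler potential. First I would compute $\sqrt{-1}\partial\bar{\partial}u$ on $\B^{ss}$: since $s$ is a nowhere-vanishing holomorphic section it is a local holomorphic frame for $\bL$, so $\partial\bar{\partial}\log|s|_h^2$ is (minus) the curvature of $h$, and because that curvature is $-2\pi\sqrt{-1}\Omega_I$ the frame/curvature formula yields $\sqrt{-1}\partial\bar{\partial}u=\Omega_I$ on $\B^{ss}$, hence on each complex submanifold $\pi^{-1}(Q)\subset\sC$ provided by Proposition~\ref{sec:orbit-type-strat-main-results}. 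For smoothness I would restrict $u$ to $P:=\pi^{-1}(Q)\cap\m^{-1}(0)$, which by Proposition~\ref{sec:orbit-type-strat-main-results} is a smooth submanifold of $\sC$ on which $\pi$ restricts to a smooth submersion $P\to Q$; since $u|_P$ is smooth and $\G$-invariant, it descends to a smooth function, which is exactly $u_0|_Q$. Continuity of $u_0$ on $U$ has already been recorded.

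The heart of the argument is to show $\sqrt{-1}\partial\bar{\partial}(u_0|_Q)=\omega_Q$. I would first extract the Kempf--Ness relation from the equivariance of $s$. Because $s$ is $\G^\C$-equivariant and holomorphic, for $\xi\in\Omega^0(\g_E)$ one gets $\nabla_{\xi^\#}s=2\pi\sqrt{-1}\langle\mu,\xi\rangle s$, and then using $\nabla^{0,1}s=0$ together with $I\xi^\#=(\sqrt{-1}\xi)^\#$ one gets $\nabla_{I\xi^\#}s=\sqrt{-1}\nabla_{\xi^\#}s=-2\pi\langle\mu,\xi\rangle s$. Differentiating $u=-\frac{1}{2\pi}\log|s|_h^2$ then gives $(\xi^\#)u=0$ (reconfirming $\G$-invariance) and $(I\xi^\#)u=2\langle\mu,\xi\rangle$. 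Thus along every $\G^\C$-orbit in $\pi^{-1}(Q)$ the critical points of $u$ in the imaginary directions $I\g$ are precisely the points of $\m^{-1}(0)$, so $u_0$ records the value of $u$ at the unique (mod $\G$) zero of the moment map in each polystable orbit. This is the standard setup of a K\"ahler quotient with a global potential, and I would finish by the reduction-of-potentials principle: fix $x\in P$ with stabilizer $K$, take a holomorphic slice $S$ at $x$ with $T_xS=H_x\cong(\bH^1)^{K^\C}\cong T_{[x]}Q$ (Proposition~\ref{sec:orbit-type-strat-main-results}), and compute the complex Hessian of $u_0$ at $[x]$. The relation $(I\xi^\#)u=2\langle\mu,\xi\rangle$ forces the mixed terms coming from the orbit directions to drop out at $x$, leaving $\sqrt{-1}\partial\bar{\partial}(u_0|_Q)|_{[x]}=\Omega_I|_{H_x}$, which is $\omega_Q$ by the definition of the reduced K\"ahler form.

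Finally, for plurisubharmonicity on the complex space $\M$, I would note that $u_0$ is continuous on $U$ and, by the previous step, strictly plurisubharmonic along the top stratum $U\cap\M^s$, which is open and dense with $\codim_x(\M\setminus\M^s)\geq4g-6\geq2$ by Corollary~\ref{sec:orbit-type-strat-codimension-estimate}. Since $\M$ is normal, a standard extension theorem for plurisubharmonic functions across analytic sets of codimension $\geq2$ upgrades $u_0$ from plurisubharmonic on the regular locus to plurisubharmonic on all of $\M$. The main obstacle is the middle step: because $u_0$ is manufactured by restricting $u$ to the real slice $\m^{-1}(0)$ and then descending, computing its Hessian with respect to the complex structure that $Q$ inherits from the analytic Hilbert quotient requires carefully controlling the non-holomorphic map from the holomorphic slice $S$ into $\m^{-1}(0)$ and verifying that the orbit-direction contributions to $\sqrt{-1}\partial\bar{\partial}u_0$ vanish in the horizontal directions $H_x$; this is exactly the content of the Kempf--Ness comparison, and it is where the relation $(I\xi^\#)u=2\langle\mu,\xi\rangle$ does the essential work.
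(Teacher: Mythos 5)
Your proposal is correct and follows essentially the same route as the paper: smoothness via the submersion $\pi^{-1}(Q)\cap\m^{-1}(0)\to Q$, the identity $\sqrt{-1}\partial\bar{\partial}u=\Omega_I$ from the curvature of $(\bL,h)$ restricted to the complex submanifold $\pi^{-1}(Q)$, and plurisubharmonicity by combining continuity, strict plurisubharmonicity on the open dense stratum, normality of $\M$, and the Grauert--Remmert extension theorem. The only difference is that you spell out the Kempf--Ness identity $(I\xi^\#)u=2\langle\mu,\xi\rangle$ and a slice computation to justify that the descended potential reproduces $\omega_Q$, a step the paper compresses into ``follows from the construction of the K\"ahler form on $Q$''; this is a welcome elaboration rather than a different argument.
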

\begin{proof}
  By Proposition~\ref{sec:orbit-type-strat-main-results},
  $\pi^{-1}(Q)\cap\m^{-1}(0)\to Q$ is a submersion. Hence, $u_0$ is smooth along
  $Q$. By construction of $u$ and the Hermitian metric $h$ on $\bL$,
  \begin{equation}
    \sqrt{-1}\partial\bar{\partial}(u|_{\pi^{-1}(Q)\cap\m^{-1}(0)})=\Omega_I|_{\pi^{-1}(Q)\cap\m^{-1}(0)}.
  \end{equation}
  Therefore, the second statement follows from the construction of the K\"ahler
  form on $Q$. Now we have shown that the restriction of $u_0$ to $\M^s$ is
  strictly plurisubharmonic. Since it is continuous, by the normality of $\M$
  and the extension theorem of plurisubharmonic functions (see
  \cite{Grauert1956}), we conclude that $u_0$ is plurisubharmonic.
\end{proof}
\begin{proof}[Proof of Theorem~\ref{sec:introduction-singular-Kahler-metric}]
  By Proposition~\ref{sec:kahler-metric-moduli-pluri-on-stratum}, there is an open covering $U_i$
  of $\M$, and a stratum-wise strictly plurisubharmonic function $\rho_i\colon U_i\to\R$ on
  each $U_i$ such that $\rho_i|_{\M^s}-\rho_j|_{\M^s}$ is pluriharmonic on
  $\M^s\cap U_i\cap U_j$. Hence, we may write
  \begin{equation}
    \rho_i|_{\M^s}-\rho_j|_{\M^s}=\Re(f_{ij})
  \end{equation}
  for some holomorphic function $f_{ij}\colon U_i\cap U_j\cap\M^s\to\C$. By
  Corollary~\ref{sec:orbit-type-strat-codimension-estimate} and the normality of
  $\M$, $f_{ij}$ has a unique holomorphic extension to $U_i\cap U_j$. Then, we
  have
  \begin{equation}
    \rho_i-\rho_j=\Re(f_{ij})\text{ on }U_i\cap U_j.
  \end{equation}
  Hence, $\{U_i,\rho_i\}$ determines a weak K\"ahler metric on $\M$.
\end{proof}

\section{Projective compactification}\label{sec:proj-comp}

\subsection{Symplectic cuts}
In this section, we will use the symplectic cut to compactify the moduli space
and thus prove Theorem~\ref{sec:introduction-compactify}. Recall that there is a
holomorphic $\C^*$-action on $\sC$ given by
\begin{equation}
  t\cdot(A,\Phi)=(A,t\Phi),\qquad t\in\C^*,(A,\Phi)\in\sC.
\end{equation}
Clearly, $\B^{ss}$ is $\C^*$-invariant. Then, it is easy to verify that the
natural map $\C^*\times\B^{ss}\to\C^*\times\M$ satisfies $(4)$ in
Theorem~\ref{sec:introduction-infi-dim-GIT}, where $\G^\C$ acts on $\C^*$
trivially. Since the $\G^\C$-action and the $\C^*$-action on $\sC$ commute, we
see that the holomorphic action $\C^*\times\B^{ss}\to\B^{ss}$ descends to a
holomorphic action $\C^*\times\M\to\M$. Moreover, each stratum in the orbit
type stratification of $\M$ is $\C^*$-invariant.

Furthermore, the induced $U(1)$-action on $\M$ is stratum-wise Hamiltonian. To
see this, we first note that $U(1)$ preserves the K\"ahler form $\Omega_I$ on
$\sC$. Then, consider the function $ f\colon\sC\to\R$ given by
\begin{equation}
   f(A,\Phi)=-\frac{1}{4\pi^2}\frac{1}{2}\|\Phi\|_{L^2}^2.
\end{equation}
\begin{proposition}\label{sec:c-action-Hamiltonian-action}
  The restriction of $ f$ to $\m^{-1}(0)$ defines a continuous function, denoted
  by the same letter $f$, on $\M$ that is smooth along each stratum $Q$ of $\M$.
  Moreover, the restriction $ f|_Q$ is a moment map for the $U(1)$-action on $Q$
  with respect to the K\"ahler form on $Q$, the one induced by the K\"ahler form
  $\Omega_I$ on $\sC$.
\end{proposition}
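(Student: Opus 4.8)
The plan is to reduce the moment-map assertion to the flat K\"ahler computation on the $\Phi$-factor of $\sC$ and then push the resulting identity through the K\"ahler reduction that defines $\omega_Q$. Throughout, write $\pi\colon\m^{-1}(0)\to\m^{-1}(0)/\G$ for the quotient map, $i\colon\m^{-1}(0)/\G\xrightarrow{\sim}\M$ for the Hitchin--Kobayashi correspondence, $Q'=i^{-1}(Q)$ for the stratum corresponding to $Q$, and $N=\pi^{-1}(Q')\subset\m^{-1}(0)$.

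First I would check that $f$ descends and is continuous and stratum-wise smooth. A unitary gauge transformation $g$ acts on $\Phi$ by $\Phi\mapsto g^{-1}\Phi g$ and preserves the pointwise Hermitian norm, so $\|\Phi\|_{L^2}$, hence $f$, is $\G$-invariant. Thus $f|_{\m^{-1}(0)}$ factors through $\m^{-1}(0)/\G$ and, via $i$, yields the asserted map $f\colon\M\to\R$; continuity is immediate from the continuity of $\|\cdot\|_{L^2}^2$ on $\sC$ together with the quotient topology. For smoothness, Proposition~\ref{sec:orbit-type-strat-main-results} gives that $\pi\colon N\to Q'$ is a smooth submersion; since $f|_N$ is the restriction of a smooth $\G$-invariant function, it descends to a smooth function on $Q'$, whence $f|_Q$ is smooth.

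Next I would establish the moment-map identity on all of $\sC$, namely $\iota_V\Omega_I=df$, where $V$ is the fundamental vector field of the $U(1)$-action, $V_{(A,\Phi)}=(0,\sqrt{-1}\Phi)$. Using the decomposition $\Omega_I=\Omega_1+\Omega_2$ from Section~\ref{sec:kahler-metric-moduli} and the fact that $\Omega_1$ is pulled back from $\A$ while $V$ has vanishing $\A$-component, one gets $\iota_V\Omega_1=0$, so the identity collapses to $\iota_V\Omega_2=df$ on the linear factor $\Omega^{1,0}(\g_E^\C)$. Since $f=-\rho$ with $\rho(\Phi)=\frac{1}{8\pi^2}\|\Phi\|_{L^2}^2$ and $\Omega_2=\sqrt{-1}\partial\bar{\partial}\rho$, this is the standard fact that the negative of a K\"ahler potential is a moment map for the scalar $U(1)$-action: a direct contraction on the Hermitian vector space $\Omega^{1,0}(\g_E^\C)$ gives $\iota_V\Omega_2=-d\rho=df$, with no surviving constants because $f$ and $\rho$ differ only by sign. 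Along the way I would record that $U(1)$ preserves $\m^{-1}(0)$ (under $\Phi\mapsto e^{\sqrt{-1}\theta}\Phi$ the term $[\Phi,\Phi^*]$ is unchanged and $\bar{\partial}_A\Phi$ is merely rescaled by a phase) and preserves each $N$, since the strata are $\C^*$-invariant.

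Finally I would descend the identity through the K\"ahler reduction. By Proposition~\ref{sec:orbit-type-strat-main-results} the form $\omega_{Q'}$ is characterized by $\pi^*\omega_{Q'}=\Omega_I|_N$, and $V$ is tangent to $N$ and $\pi$-related to the fundamental vector field $\bar{V}$ of the induced $U(1)$-action on $Q'$ (the $U(1)$- and $\G$-actions commute). Restricting $\iota_V\Omega_I=df$ to $N$ and using $f|_N=\pi^*(f|_{Q'})$ gives $\pi^*(\iota_{\bar{V}}\omega_{Q'})=\pi^*d(f|_{Q'})$; since $\pi$ is a submersion, $\pi^*$ is injective on forms, whence $\iota_{\bar{V}}\omega_{Q'}=d(f|_{Q'})$, i.e.\ $f|_{Q'}$ is a moment map on $Q'$. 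Transporting by the $U(1)$-equivariant biholomorphism $i\colon Q'\to Q$ of Proposition~\ref{sec:orbit-type-strat-Kob-Hit-preser}, which satisfies $i^*\omega_Q=\omega_{Q'}$ by the construction of $\omega_Q$ in Section~\ref{sec:kahler-metric-moduli}, yields the claim for $f|_Q$. I expect the main subtlety to be this descent step---verifying tangency and $\pi$-relatedness of the vector fields and the defining relation $\pi^*\omega_{Q'}=\Omega_I|_N$ carefully enough that the submersion argument applies---rather than the routine flat moment-map computation.
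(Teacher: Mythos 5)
Your proposal is correct and follows essentially the same route as the paper: establish that $f$ is a moment map for the $U(1)$-action on $\sC$, use $\G$-invariance to descend it to a continuous, stratum-wise smooth function on $\M$, and push the moment-map identity through the submersions $\pi^{-1}(Q)\cap\m^{-1}(0)\to Q$. The only difference is that the paper simply cites \cite[p.92]{Hitchin1987b} for the identity $\iota_V\Omega_I=df$ on $\sC$, whereas you verify it directly via the splitting $\Omega_I=\Omega_1+\Omega_2$ and the flat computation on the $\Phi$-factor, and you spell out the $\pi$-relatedness and injectivity of $\pi^*$ in the descent step that the paper leaves implicit.
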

\begin{proof}
  It is shown in \cite[p.92]{Hitchin1987b} that $ f$ is a moment map for the
  $U(1)$-action on $\sC$ with respect to the K\"ahler form $\Omega_I$. Since
  $ f$ is $\G$-invariant, its restriction to $\m^{-1}(0)$ descends to
  $\m^{-1}(0)/\G$ and hence defines a continuous function on $\M$, which we
  denote by the same letter $ f$. Let $Q$ be a stratum in the moduli space. By
  Proposition~\ref{sec:orbit-type-strat-main-results}, the restriction of $ f$
  to $\pi^{-1}(Q)\cap\m^{-1}(0)$ descends to a smooth function on $Q$ which is
  precisely the restriction of $ f\colon\M\to\R$ to $Q$. Since the quotient map
  $\pi^{-1}(Q)\cap\m^{-1}(0)\to Q$ is $U(1)$-equivariant, we conclude that
  $ f|_Q$ is a moment map for the $U(1)$-action on $Q$.
\end{proof}

To perform the symplectic cut of $\M$, we consider the direct product
$\M\times\C$ and let $\C^*$ act on $\C$ by multiplication. Hence, $\C^*$ acts
diagonally on $\M\times\C$. Moreover, $\M\times\C$ admits a stratification such
that each stratum $Q\times\C$ is equipped with the product K\"ahler form. The
next result implies that the induced $U(1)$ action on $\M\times\C$ is also
stratum-wise Hamiltonian.
\begin{proposition}\label{sec:symplectic-cuts-moment-map-on-product}
  The continuous map 
  \begin{equation}
    \tilde{ f}([A,\Phi]_S,z)= f([A,\Phi]_S)-\frac{1}{2}\|z\|^2
  \end{equation}
  is smooth along each stratum $Q\times\C$, and its restriction to $Q\times\C$
  is a moment map for the induced $U(1)$-action on $Q\times\C$ with respect to
  the product K\"ahler form on $Q\times\C$.
\end{proposition}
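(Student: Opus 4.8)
The plan is to reduce the statement to the additivity of moment maps under products, together with the standard computation of the moment map for the $U(1)$-action on $\C$. First I would fix a stratum $Q$ of $\M$ with its K\"ahler form $\omega_Q$ (induced from $\Omega_I$) and equip $Q\times\C$ with the product K\"ahler form $\omega_{Q\times\C}=\pi_Q^*\omega_Q+\pi_\C^*\omega_\C$, where $\pi_Q,\pi_\C$ are the two projections and $\omega_\C=\frac{\sqrt{-1}}{2}\,dz\wedge d\bar z=dx\wedge dy$ is the standard form on $\C$. By Proposition~\ref{sec:orbit-type-strat-main-results} each $Q$ is $\C^*$-invariant, so $Q\times\C$ is invariant under the diagonal action and the diagonal $U(1)$-action on $Q\times\C$ is generated by $X^\#=(X_Q^\#,X_\C^\#)$, where $X_Q^\#$ generates the $U(1)$-action on $Q$ and $X_\C^\#$ generates multiplication on $\C$. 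Since a moment map for a diagonal action on a product of K\"ahler manifolds is the sum of the pulled-back moment maps of the two factors, it then suffices to produce moment maps on each factor, with matching sign conventions, and to check smoothness.

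For the first factor, Proposition~\ref{sec:c-action-Hamiltonian-action} already supplies what is needed: $f|_Q$ is a moment map for the $U(1)$-action on $Q$, namely $d(f|_Q)=\iota_{X_Q^\#}\omega_Q$, and moreover $f$ is continuous on $\M$ and smooth along $Q$. For the second factor I would compute directly that $X_\C^\#=\sqrt{-1}\,z=-y\,\partial_x+x\,\partial_y$ satisfies
\begin{equation}
  \iota_{X_\C^\#}\omega_\C=-y\,dy-x\,dx=-\tfrac12\,d(x^2+y^2)=d\!\left(-\tfrac12\|z\|^2\right),
\end{equation}
so that $-\tfrac12\|z\|^2$ is a moment map for the $U(1)$-action on $\C$ with exactly the sign convention of Proposition~\ref{sec:c-action-Hamiltonian-action} (the same convention under which $f=-\frac{1}{8\pi^2}\|\Phi\|^2$ is a moment map on $\sC$, cf. \cite[p.92]{Hitchin1987b}). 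This is the step where the only real care is required, but it is purely a bookkeeping matter of conventions rather than a genuine obstacle.

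To assemble the pieces, note that smoothness of $\tilde{f}|_{Q\times\C}=f|_Q\circ\pi_Q-\tfrac12\|\cdot\|^2\circ\pi_\C$ along $Q\times\C$ is immediate from the smoothness of $f|_Q$ and of the projections, and continuity of $\tilde{f}$ on $\M\times\C$ follows from continuity of $f$ on $\M$. For the moment map identity I would then write
\begin{equation}
  d\big(\tilde{f}|_{Q\times\C}\big)=\pi_Q^*\,d(f|_Q)+\pi_\C^*\,d\big(-\tfrac12\|z\|^2\big)=\pi_Q^*\big(\iota_{X_Q^\#}\omega_Q\big)+\pi_\C^*\big(\iota_{X_\C^\#}\omega_\C\big)=\iota_{X^\#}\omega_{Q\times\C},
\end{equation}
where the last equality uses that $X^\#$ projects to $X_Q^\#$ and $X_\C^\#$ and that the mixed contractions vanish for the product form. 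This is precisely the assertion that $\tilde{f}|_{Q\times\C}$ is a moment map for the diagonal $U(1)$-action on $Q\times\C$, completing the argument. I do not expect any deep difficulty: the content is the product formula and the elementary $\C$-computation, with the one subtlety being consistency of the sign conventions across the two factors.
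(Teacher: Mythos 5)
Your proposal is correct and follows the same route as the paper: the paper's proof likewise invokes Proposition~\ref{sec:c-action-Hamiltonian-action} for the factor $Q$ and then simply asserts that the diagonal action on the product makes $\tilde{f}|_{Q\times\C}$ a moment map, which is exactly the additivity-plus-elementary-$\C$-computation you carry out explicitly. Your version merely spells out the sign check $\iota_{X_\C^\#}\omega_\C=d\bigl(-\tfrac12\|z\|^2\bigr)$ that the paper leaves implicit.
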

\begin{proof}
  It is clear that $\tilde{ f}$ is continuous on $\M\times\C$. For each stratum
  $Q$, Proposition~\ref{sec:c-action-Hamiltonian-action} implies that $ f|_Q$
  is a smooth moment map on $Q$. Since $U(1)$ acts diagonally on $Q\times\C$, it
  is easy to see that $\tilde{f}|_{Q\times\C}$ is a moment map for the
  $U(1)$-action with respect to the product K\"ahler form on $Q\times\C$.
  Therefore, $\tilde{ f}$ is a stratum-wise moment map.
\end{proof}

Now we recall the definition of the Hitchin fibration. Given a Higgs bundle
$(A,\Phi)$, the coefficient of $\lambda^{n-i}$ in the characteristic polynomial
$\det(\lambda+\Phi)$ is a holomorphic section of $\K_M^i$, where $n$ is the rank
of $\E$, $i=1,\cdots,n$, and $\K_M$ is the canonical bundle on the Riemann
surface $M$. Since these sections are clearly $\G^\C$-invariant, by
Theorem~\ref{sec:introduction-infi-dim-GIT}, we have obtained a well-defined
holomorphic map, called the Hitchin fibration,
\begin{equation}
  h\colon\M\to\bigoplus_{i=1}^nH^0(M,\K_M^i).
\end{equation}
It is known that $h$ is proper (see \cite[Theorem 2.15]{Wentworth2016} or
\cite[Theorem 8.1]{Hitchin1987b}). Therefore, the nilpotent cone $h^{-1}(0)$ is
compact so that $ f$ has a lower bound on $h^{-1}(0)$. We choose a constant
$c<0$ such that $h^{-1}(0)\subset f^{-1}(c,0]$. In other words,
$ f^{-1}(-\infty,c]$ does not contain the nilpotent cone. Then, we perform the
symplectic cut of $\M$ at the level $c$. By definition, it is the singular
symplectic quotient
\begin{equation}
  \tilde{ f}^{-1}(c)/U(1)=\Bigl\{([A,\Phi]_S,z)\in\M\times\C\colon f([A,\Phi]_S)-\frac{1}{2}\|z\|^2
  =c\Bigr\}/U(1).
\end{equation}
If $\M\times\C$ admits a (strong) K\"ahler metric, then we may directly apply
the analytic GIT developed in \cite{Heinzner1994}. Since we are unable to prove
this, we will have to take a detour to prove that the symplectic cut of $\M$ at
the level $c$ is a compact complex space.

Let $W=(\M\times\C)\setminus(h^{-1}(0)\times\{0\})$. It is clear that $W$ is
$\C^*$-invariant and open. We first show that the analytic Hilbert quotient
$W/\C^*$ exists.
\begin{lemma}\label{sec:symplectic-cuts-properness-on-M}
  The $\C^*$-action on $\M\setminus h^{-1}(0)$ is proper.
\end{lemma}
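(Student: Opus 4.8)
The plan is to exploit the equivariance of the Hitchin fibration $h\colon\M\to\bigoplus_{i=1}^nH^0(M,\K_M^i)$ together with its properness, thereby reducing the properness of the $\C^*$-action on $\M\setminus h^{-1}(0)$ to the corresponding statement on the Hitchin base, where the induced action is linear with strictly positive weights.

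First I would record the equivariance of $h$. Writing $\det(\lambda+\Phi)=\sum_{i=1}^n a_i(\Phi)\lambda^{n-i}$, the substitution $\Phi\mapsto t\Phi$ gives $a_i(t\Phi)=t^i a_i(\Phi)$, so that if $\C^*$ acts on the base $B=\bigoplus_{i=1}^nH^0(M,\K_M^i)$ by $t\cdot(a_1,\dots,a_n)=(ta_1,\dots,t^na_n)$, then $h$ is $\C^*$-equivariant. All the weights $1,\dots,n$ are strictly positive, and this is the only feature of the base action I will use. Since $h$ is proper and $B\setminus\{0\}$ is open in $B$, the restriction $h\colon\M\setminus h^{-1}(0)\to B\setminus\{0\}$ is again proper.

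I then invoke the sequential criterion for properness of a group action on a metrizable space (complex spaces here are metrizable): the $\C^*$-action on $\M\setminus h^{-1}(0)$ is proper provided that, whenever $x_k\to x$ and $t_k\cdot x_k\to y$ with $x_k,x,y\in\M\setminus h^{-1}(0)$, the sequence $t_k$ admits a subsequence converging in $\C^*$. Applying the continuous equivariant map $h$, set $b=h(x)=\lim h(x_k)$ and $b'=h(y)=\lim t_k\cdot h(x_k)$; both are nonzero precisely because $x,y\notin h^{-1}(0)$. Writing $h(x_k)=(a_1^{(k)},\dots,a_n^{(k)})$ with $a_i^{(k)}\to b_i$, equivariance gives $t_k^i a_i^{(k)}\to b_i'$ for each $i$.

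The core estimate bounds $|t_k|$ away from $0$ and $\infty$. If some subsequence had $|t_k|\to\infty$, I pick $i$ with $b_i\neq0$; then $|t_k^i a_i^{(k)}|=|t_k|^i|a_i^{(k)}|\to\infty$, contradicting $t_k^i a_i^{(k)}\to b_i'$. If some subsequence had $|t_k|\to0$, I pick $j$ with $b_j'\neq0$; then $|t_k^j a_j^{(k)}|=|t_k|^j|a_j^{(k)}|\to0$, contradicting $t_k^j a_j^{(k)}\to b_j'\neq0$. Hence $|t_k|$ lies in a compact subset of $(0,\infty)$, so $t_k$ has a subsequence converging in $\C^*$, as the criterion demands. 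The substantive step is the reduction to the base through equivariance; once that is in place, positivity of the weights does all the work. The only points needing care, and hence where I expect the mild obstacle to lie, are verifying that $h$ remains proper after deleting the nilpotent cone and that both limits $b,b'$ are genuinely nonzero — which is exactly where the hypothesis of working on $\M\setminus h^{-1}(0)$ is used.
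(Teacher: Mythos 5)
Your proof is correct, and its skeleton is the same as the paper's: verify the sequential criterion for properness by showing that $|t_k|$ can neither blow up nor tend to $0$, using the behavior of the Hitchin map under the $\C^*$-action. The one genuine difference is where the contradiction comes from. The paper derives both contradictions by applying the \emph{properness} of $h$: it normalizes $t_i'=t_i/|t_i|$, observes that $h\bigl(\tfrac{1}{|t_i|}t_i\cdot x_i\bigr)\to0$, and extracts a subsequence converging into the nilpotent cone, contradicting $t_\infty'\cdot x'\notin h^{-1}(0)$ (and similarly for $t_i\to0$). You instead make the equivariance $a_i(t\Phi)=t^ia_i(\Phi)$ explicit and read off both bounds on $|t_k|$ directly from the strict positivity of the weights on the finite-dimensional base, using only the continuity and equivariance of $h$ together with $h(x)\neq0$ and $h(y)\neq0$. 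In particular the properness of $h$, which you dutifully record at the start, is never actually needed in your argument --- so your version is marginally more elementary, while the paper's phrasing fits the pattern it reuses in Corollary~\ref{sec:symplectic-cuts-properness}, where properness of $h$ genuinely enters. Both arguments are sound.
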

\begin{proof}  Clearly, $h^{-1}(0)$ is $\C^*$-invariant. Suppose that
  \begin{enumerate}
  \item $x_i$ converges to $x'\notin h^{-1}(0)$.
  \item $t_i\cdot x_i$ converges to $y\notin h^{-1}(0)$.
  \end{enumerate}
  We first claim that $|t_i|$ cannot be unbounded. If not, we may assume that
  $|t_i|\to\infty$ and let $t_i'=t_i/|t_i|$. By passing to a subsequence, we may
  assume that $t_i'$ converges to $t_\infty'$, and $|t_\infty'|=1$. As a
  consequence, since $x'\notin h^{-1}(0)$,
  \begin{equation}
    \lim_{i\to\infty}t'_i\cdot x_i=t_\infty'\cdot x'\notin h^{-1}(0).
  \end{equation}
  On the other hand, since $t_i\cdot x_i$ converges,
  \begin{equation}
    \lim_{i\to\infty}h\biggl(\frac{1}{|t_i|}t_i\cdot x_i\biggr)=0.
  \end{equation}
  Since $h$ is proper, by passing to a subsequence, we may assume that
  \begin{equation}
    \frac{1}{|t_i|}t_i\cdot x_i=t_i'\cdot x_i
  \end{equation}
  converges to an element in $h^{-1}(0)$. This is a contradiction.

  Since $t_i$ is bounded, it has a subsequence that is convergent. We claim that
  such a sequence cannot converge to 0. If not, suppose that $t_i\to0$. Then,
  \begin{equation}
    \lim_{i\to\infty}h(t_i\cdot x_i)=0,
  \end{equation}
  and hence $t_i\cdot x_i$ has a subsequence converging to an element in
  $h^{-1}(0)$. Therefore, $y\in h^{-1}(0)$. This is a contradiction.
\end{proof}
\begin{corollary}\label{sec:symplectic-cuts-properness}
  The $\C^*$-action on $W$ is proper.
\end{corollary}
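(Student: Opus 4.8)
The plan is to check properness via its sequential form, exactly as in the proof of Lemma~\ref{sec:symplectic-cuts-properness-on-M}. Since $W$ is an open subset of the complex space $\M\times\C$, it is locally compact, Hausdorff and second countable, so the diagonal $\C^*$-action on $W$ is proper precisely when, for all sequences $(x_i,z_i)\to(x',z')$ and $t_i\cdot(x_i,z_i)=(t_i\cdot x_i,\,t_iz_i)\to(y,w)$ with all terms and both limits lying in $W$, the sequence $\{t_i\}$ admits a subsequence converging in $\C^*$. I would organize the verification around the decomposition $W=\bigl((\M\setminus h^{-1}(0))\times\C\bigr)\cup(\M\times\C^*)$ noted in the introduction, together with the two facts that $h^{-1}(0)$ is closed and $\C^*$-invariant and that each point of $W$ has either a nonzero $\C$-coordinate or a first coordinate off $h^{-1}(0)$.

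The two easy cases use the pieces directly. If $x'\notin h^{-1}(0)$ and $y\notin h^{-1}(0)$, then, $h^{-1}(0)$ being closed and invariant, the sequences $x_i\to x'$ and $t_i\cdot x_i\to y$ eventually lie in $\M\setminus h^{-1}(0)$, and Lemma~\ref{sec:symplectic-cuts-properness-on-M} yields a convergent subsequence of $\{t_i\}$. If instead $z'\neq0$ and $w\neq0$, then $z_i\neq0$ eventually and $t_i=(t_iz_i)/z_i\to w/z'\in\C^*$ outright. Because every point of $W$ lies in one of the two pieces, the only remaining possibility (up to replacing $t_i$ by $t_i^{-1}$ and exchanging the roles of the two limits, under which the criterion is symmetric) is the mixed case $z'=0$, $w\neq0$, which forces $x'\notin h^{-1}(0)$.

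The mixed case is where the real work lies, and I would dispose of it with the $\C^*$-equivariance of the Hitchin fibration: writing $h=(h_1,\dots,h_n)$ with $h_j\colon\M\to H^0(M,\K_M^j)$, each $h_j$ is homogeneous of degree $j$, so that
\[
  h_j(t\cdot x)=t^j\,h_j(x)\qquad(t\in\C^*,\ x\in\M).
\]
From $z_i\to z'=0$ and $t_iz_i\to w\neq0$ one gets $|t_i|\to\infty$; choosing $j$ with $h_j(x')\neq0$ (possible since $x'\notin h^{-1}(0)$), continuity of $h_j$ gives $|h_j(t_i\cdot x_i)|=|t_i|^j\,|h_j(x_i)|\to\infty$, which contradicts the convergence $t_i\cdot x_i\to y$. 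Hence the mixed case cannot occur, and in the two remaining cases $\{t_i\}$ has a subsequence converging in $\C^*$; this shows that the action on $W$ is proper. The main obstacle is thus exactly the control of these straddling configurations, where the source and target limits sit in different pieces of $W$, and the degree-$j$ homogeneity of $h$ is precisely what rules them out.
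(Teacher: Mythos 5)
Your proof is correct, and it follows the same overall strategy as the paper: verify properness sequentially, exploit the decomposition of $W$ into $(\M\setminus h^{-1}(0))\times\C$ and $\M\times\C^*$, and fall back on Lemma~\ref{sec:symplectic-cuts-properness-on-M} when both limits have first coordinate off the nilpotent cone. Where you genuinely diverge is in the treatment of the straddling configurations. The paper organizes its cases by the location of the source limit $(x',a')$ and, when $|t_i|$ threatens to be unbounded, normalizes to $t_i'=t_i/|t_i|$ and invokes the \emph{properness} of the Hitchin fibration $h$ to extract a subsequence of $t_i'x_i$ converging into $h^{-1}(0)$, contradicting $t_\infty'x'\notin h^{-1}(0)$. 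You instead make the $\C^*$-equivariance $h_j(t\cdot x)=t^jh_j(x)$ explicit and observe that $|t_i|\to\infty$ together with $h_j(x')\neq0$ forces $|h_j(t_i\cdot x_i)|\to\infty$, contradicting convergence of $t_i\cdot x_i$; this is more elementary at that step (only continuity and homogeneity of $h$ are needed, not its properness, though properness still enters through the cited Lemma), and your symmetric three-case division, with the observation that the properness criterion is invariant under $t_i\mapsto t_i^{-1}$ and swapping source and target, reduces the two mixed cases to one. Both arguments are sound; yours is slightly leaner in the key estimate, while the paper's is uniform with the technique it already used to prove the Lemma.
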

\begin{proof}  Note that
  \begin{equation}
    W=(\M\setminus h^{-1}(0)\times\C)\cup(\M\times\C^*).
  \end{equation}
  Suppose there are sequences
  \begin{enumerate}
  \item $(x_i,a_i)\in W$ converges to $(x',a')\in W$
  \item $t_i\cdot(x_i,a_i)\in W$ converges to $(y,b)\in W$
  \end{enumerate}
  We need to show that $t_i$ has a subsequence that converges in $\C^*$. We
  prove this by considering the following cases:
  \begin{enumerate}
  \item Suppose $(x',a')\in\M\times\C^*$. Then, $(x_i,a_i)\in\M\times\C^*$ if
    $i\gg0$. Hence, $t_i=(t_ia_i)a_i^{-1}$ converges to $ba'^{-1}$. If $b\neq0$,
    then $ba'^{-1}\in\C^*$, and we are done with this case. If $b=0$, then
    $y\notin h^{-1}(0)$. Moreover, $\lim_{i\to\infty}h(t_ix_i)=0$. Then,
    $t_ix_i$ has a subsequence converging to an element in $h^{-1}(0)$. Since
    this element has to be $y$, we have shown that $b=0$ is impossible.

  \item Suppose that $(x',a')\in(\M\setminus h^{-1}(0))\times\C$. Then, both
    $(x_i,a_i)$ and $t_i\cdot(x_i,a_i)$ lie in $(\M\setminus h^{-1}(0))\times\C$
    if $i\gg0$. If $y\notin h^{-1}(0)$, then
    Lemma~\ref{sec:symplectic-cuts-properness-on-M} applies. Hence, we may
    assume that $y\in h^{-1}(0)$ and hence $b\neq0$. If $a'\neq0$, then
    $t_i=(t_ia_i)a_i^{-1}$ converges to $ba'^{-1}$, and we are done. Hence, we
    may assume that $a'=0$, and therefore
    \begin{align}
      \begin{aligned}
        t_ia_i&\to b\neq0,\\
        a_i&\to a'=0.
      \end{aligned}
    \end{align}
    We claim that $t_i$ is bounded, so that $t_ia_i$ converges to 0, which is a
    contradiction. If not, we may assume that $|t_i|\to\infty$ and let
    $t_i'=t_i/|t_i|$. By passing to a subsequence, we may further assume that
    $t_i'$ converges to $t_\infty'$ with $|t_\infty'|=1$. As a consequence,
    $t'_ix_i$ converges to $t'_\infty x'\notin h^{-1}(0)$. On the other hand,
    \begin{equation}
      \lim_{i\to\infty}h(t'_ix_i)=\lim_{i\to\infty}h\biggl(\frac{1}{|t_i|}t_ix_i\biggr)=0.
    \end{equation}
    Hence, the properness of $h$ implies that $t'_ix_i$ contains a subsequence
    converging to an element in $h^{-1}(0)$, which is a contradiction.
  \end{enumerate}
\end{proof}
\begin{corollary}
  The analytic Hilbert quotient of $W$ by $\C^*$ exists. Moreover, $W/\C^*$ is a
  geometric quotient.
\end{corollary}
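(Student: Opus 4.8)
The plan is to derive the corollary entirely from the properness established in Corollary~\ref{sec:symplectic-cuts-properness}: for a proper holomorphic action of a complex reductive group one obtains a geometric quotient that is automatically an analytic Hilbert quotient, and this requires no K\"ahler hypothesis on $\M\times\C$. First I would record the two structural consequences of properness. Since the action is proper, every isotropy group $\C^*_w$ ($w\in W$) is compact; as a compact complex Lie subgroup of $\C^*$ it must be finite, so every stabilizer is finite. Likewise, properness forces every orbit $\C^*\cdot w$ to be closed in $W$. This second fact is decisive: an analytic Hilbert quotient by definition separates distinct closed orbits, so once all orbits are closed its fibers can be nothing but single orbits, which is precisely the assertion that the quotient is geometric.

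Second, I would build the quotient locally by a slice theorem for proper reductive actions of the Luna type used in \cite{Snow1982} and \cite{Heinzner1998}. Around an orbit $\C^*\cdot w$ with finite stabilizer $\Gamma=\C^*_w$ there should be a $\Gamma$-stable locally closed complex subspace $S\ni w$ (a slice) for which the natural map $\C^*\times_\Gamma S\to W$ is a biholomorphism onto a $\C^*$-invariant open neighborhood of the orbit. The local model for the quotient is then the finite quotient $S/\Gamma$, which is again a reduced complex space, and the induced map $\C^*\times_\Gamma S\to S/\Gamma$ is $\C^*$-invariant with fibers exactly the $\C^*$-orbits. On this model the analytic Hilbert quotient axioms are straightforward: a $\C^*$-invariant holomorphic function on $\C^*\times_\Gamma S$ is the same thing as a $\Gamma$-invariant holomorphic function on $S$, i.e.\ a function on $S/\Gamma$, which gives $\sO_{S/\Gamma}=\pi_*\sO^{\C^*}$; and for a Stein open $U\subset S/\Gamma$ one has $\pi^{-1}(U)\cong(\C^*\times S_U)/\Gamma$, where $S_U$ is the (Stein) preimage of $U$ in $S$, so that $\pi^{-1}(U)$ is the quotient of the Stein space $\C^*\times S_U$ by the finite group $\Gamma$ and is therefore Stein.

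Finally I would patch. By the uniqueness of analytic Hilbert quotients (\cite[\S3]{Heinzner1998}) the local complex-space structures on the orbit space agree on overlaps, so they glue to a global structure on $W/\C^*$ making $\pi\colon W\to W/\C^*$ an analytic Hilbert quotient; since each fiber is a single closed orbit, $\pi$ is a geometric quotient. The step I expect to be the main obstacle is the slice theorem in the purely proper, non-K\"ahler analytic setting: producing the $\Gamma$-stable holomorphic slice $S$ (equivalently, the local identification $\C^*\times_\Gamma S$) is exactly the ingredient that replaces the momentum-map reduction of \cite{Heinzner1994} that we cannot use here, and checking that the properness of Corollary~\ref{sec:symplectic-cuts-properness} suffices to invoke it---in particular, locating invariant Stein neighborhoods of the closed orbits---is where the genuine work lies.
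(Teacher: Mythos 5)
Your proposal is correct and follows essentially the same route as the paper: the paper's entire proof is the observation that properness (Corollary~\ref{sec:symplectic-cuts-properness}) plus the general theorem \cite[\S4, Corollary 2]{Heinzner1998} on proper actions of reductive groups yields an analytic Hilbert quotient that is geometric. What you sketch --- finite stabilizers, closed orbits, a holomorphic slice $\C^*\times_\Gamma S$, finite quotients $S/\Gamma$ as local models, and gluing by uniqueness --- is precisely an outline of the proof of that cited result, and the slice theorem you correctly flag as the main obstacle is exactly the content supplied by the reference, so no new argument is needed beyond the citation.
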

\begin{proof}
  This follows from Corollary~\ref{sec:symplectic-cuts-properness} and \cite[\S4, Corollary 2]{Heinzner1998}.
\end{proof}

Then, we study the relationship between the symplectic cut
$\tilde{ f}^{-1}(c)/U(1)$ and the analytic Hilbert quotient $W/\C^*$. Let
$(\M\times\C)^{ss}$ be the semistable points in $\M\times\C$ determined by
$\tilde{ f}-c$. In other words, $([A,\Phi]_S,z)$ lies in $(\M\times\C)^{ss}$ if
and only if the closure of its $\C^*$-orbit in $\M\times\C$ intersects
$\tilde{ f}^{-1}(c)$.
\begin{lemma}\label{sec:symplectic-cuts-W=ss}
  $W=(\M\times\C)^{ss}=\C^*\cdot\tilde{ f}^{-1}(c)$.
\end{lemma}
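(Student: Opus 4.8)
The plan is to establish the two displayed equalities through a short cycle of inclusions among the three sets $W$, $(\M\times\C)^{ss}$, and $\C^*\cdot\tilde{f}^{-1}(c)$. Two inclusions are essentially free. First, every $p\in\tilde{f}^{-1}(c)$ lies in its own orbit closure $\overline{\C^*\cdot p}$, so $\C^*\cdot\tilde{f}^{-1}(c)\subseteq(\M\times\C)^{ss}$. Second, by the choice of $c$ we have $f>c$ on the compact nilpotent cone $h^{-1}(0)$; hence a point $(x,0)$ with $x\in h^{-1}(0)$ satisfies $\tilde{f}(x,0)=f(x)>c$, so $\tilde{f}^{-1}(c)$ is disjoint from $h^{-1}(0)\times\{0\}$, i.e. $\tilde{f}^{-1}(c)\subseteq W$. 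Since $W$ is $\C^*$-invariant, $\C^*\cdot\tilde{f}^{-1}(c)\subseteq W$. It then remains to prove $(\M\times\C)^{ss}\subseteq W$ and $W\subseteq\C^*\cdot\tilde{f}^{-1}(c)$, after which all three sets coincide.

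For $(\M\times\C)^{ss}\subseteq W$, I would argue by contraposition. The set $h^{-1}(0)\times\{0\}$ is closed (the nilpotent cone is compact, hence closed) and $\C^*$-invariant, and on it $\tilde{f}$ restricts to $f\circ\mathrm{pr}_1>c$. Thus if $p=(x,0)$ with $x\in h^{-1}(0)$, then the entire orbit closure $\overline{\C^*\cdot p}$ is contained in $h^{-1}(0)\times\{0\}$, where $\tilde{f}>c$; in particular $\overline{\C^*\cdot p}$ cannot meet $\tilde{f}^{-1}(c)$, so $p\notin(\M\times\C)^{ss}$. Equivalently, every semistable point already lies in $W$.

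The core is the inclusion $W\subseteq\C^*\cdot\tilde{f}^{-1}(c)$. Fix $p=(x,z)\in W$. Since $\tilde{f}$ is $U(1)$-invariant, it suffices to produce a positive real $r$ with $g(r):=\tilde{f}(r\cdot p)=c$, and as $g$ is continuous on $(0,\infty)$ (by Proposition~\ref{sec:c-action-Hamiltonian-action} and Proposition~\ref{sec:symplectic-cuts-moment-map-on-product}) this will follow from the intermediate value theorem once both ends are controlled. As $r\to0^{+}$, the $\C^*$-limit $x_0=\lim_{r\to0}r\cdot x$ exists and is a $\C^*$-fixed point, hence lies in $h^{-1}(0)$, and continuity gives $g(r)\to f(x_0)>c$. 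For the other end I split into two cases covering $W$. If $z\neq0$, then $g(r)=f(r\cdot x)-\tfrac12 r^2\|z\|^2\le-\tfrac12 r^2\|z\|^2\to-\infty$, using $f\le0$. If $z=0$, then $p\in W$ forces $x\notin h^{-1}(0)$, so the Hitchin coordinates scale as $h_i(r\cdot x)=r^i h_i(x)$ and $\|h(r\cdot x)\|\to\infty$; since $f^{-1}([c,0])$ is compact by the properness of $f$, its image under $h$ is bounded, so $r\cdot x\notin f^{-1}([c,0])$ for large $r$, i.e. $g(r)=f(r\cdot x)<c$. In either case $g$ drops below $c$, so $g(r_*)=c$ for some $r_*\in(0,\infty)$, whence $r_*\cdot p\in\tilde{f}^{-1}(c)$ and $p\in\C^*\cdot\tilde{f}^{-1}(c)$.

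The main subtlety is the $r\to\infty$ analysis when $z=0$. It is tempting to evaluate $f(r\cdot x)$ by the naive rule $r^2 f(x)$, but this is false: the polystable representative of $[A,r\Phi]$ moves with $r$, and the $\C^*$-limits need not have vanishing Higgs field, so no closed-form scaling of $f$ is available. The argument deliberately avoids any such formula and uses only soft input — continuity of $f$ and $\tilde{f}$, the bound $f\le0$ with $f^{-1}([c,0])$ compact, and the homogeneous scaling of the Hitchin map — so that leaving the nilpotent cone pushes $r\cdot x$ out of every $f$-sublevel set. The remaining care is at the boundary $r\to0^{+}$, where the estimate $g(r)\to f(x_0)>c$ rests on the existence of the $\C^*$-limit $x_0$; this is the one place where the structure of the Hitchin flow is genuinely invoked.
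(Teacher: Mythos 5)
Your proof is correct and follows essentially the same route as the paper: the two trivial inclusions plus an intermediate-value argument along the ray $r\mapsto r\cdot p$, with the same endpoint analysis at $r\to\infty$ (via $f\le0$ when $z\neq0$, and via the homogeneous scaling of the Hitchin map together with the properness of $f$ when $z=0$). The only point to flag is that at $r\to0^{+}$ you invoke the existence of the full $\C^*$-limit $x_0$, which the paper never establishes; it instead uses the properness of $h$ to extract a subsequence $t_n\to0$ with $t_n\cdot x$ converging into $h^{-1}(0)$, which already yields $g(t_n)>c$ for large $n$ and is all the intermediate value theorem requires.
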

\begin{proof}
  We first show that $\tilde{ f}^{-1}(c)\subset W$. Suppose that this is not
  true, and we choose some $([A,\Phi]_S,z)\in\tilde{ f}^{-1}(c)$ such that
  $([A,\Phi]_S,z)\notin W$. In other words, $[A,\Phi]_S\in h^{-1}(0)$ and $z=0$.
  Hence, $\tilde{ f}([A,\Phi]_S,z)= f([A,\Phi]_S)=c$. This cannot happen by
  the choice of the level $c$.

  Then, we show that $(\M\times\C)^{ss}\subset W$. If the closure of the
  $\C^*$-orbit of a point $([A,\Phi]_S,z)$ in $\M\times\C$ meets
  $\tilde{ f}^{-1}(c)$, then it must meet $W$, since $W$ is open. Since $W$ is
  also $\C^*$-invariant, $W$ contains $([A,\Phi]_S,z)$.

  Finally, we show that $W\subset(\M\times\C)^{ss}$. For every $([A,\Phi]_S,z)\in
  W$, consider the function
  \begin{equation}
    q(t)= f([A,t\Phi])-\frac{1}{2}t^2\|z\|^2-c\qquad t>0.
  \end{equation}
  We show that $q(t_0)=0$ for some $t_0>0$ so that $t\cdot([A,\Phi]_S,z)$ lies
  in $\tilde{ f}^{-1}(c)$ for some $t>0$. Since $h([A,t\Phi])\to0$ as $t\to0$,
  the properness of $h$ implies that there exists a sequence ${t_n}\subset\C^*$
  such that $t_n\to0$ and $[A,t_n\Phi]$ converges to some $[B,\Psi]_S\in
  h^{-1}(0)$. Hence, letting $n\to\infty$, we see that
  \begin{equation}
    \lim_{n\to\infty}q(t_n)= f([B,\Psi]_S)-c>0.
  \end{equation}
  On the other hand, since $ f\leq0$, we have
  \begin{equation}
    q(t)\leq-\frac{1}{2}t^2\|z\|^2-c.
  \end{equation}
  If $z\neq0$, then $t\gg0$ implies that $q(t)<0$. Hence, $q(t_0)=0$ for some
  $t_0>0$.

  Now, we assume that $z=0$ so that $[A,\Phi]_S\notin h^{-1}(0)$. We claim that
  the function $t\mapsto f([A,t\Phi]_S)$ is unbounded below as $t\to\infty$. If
  this claim is true, then $q(t)<0$ if $t\gg0$, and hence $q(t_0)=0$ for some
  $t_0>0$. Now, we prove the claim. Assuming the contrary, we may choose a
  sequence of $\{t_n\}\subset\C^*$ such that $t_n\to\infty$ and $
  f([A,t_n\Phi])$ is bounded. By the properness of $ f$, by passing to a
  subsequence, we may assume that $[A,t_n\Phi]$ converges to some $[B,\Psi]_S$.
  Hence, $h([A,t_n\Phi])$ also converges as $t_n\to\infty$. This implies that
  $h([A,\Phi]_S)=0$, which is a contradiction.

  Finally, note that the proof has already shown that
  $(\M\times\C)^{ss}=\C^*\cdot\tilde{ f}^{-1}(0)$.
\end{proof}
\begin{corollary}\label{sec:symplectic-cuts-symplectic-quotient}
  The inclusion $\tilde{ f}^{-1}(c)\hookrightarrow(\M\times\C)^{ss}$ induces a
  homeomorphism
  \begin{equation}
    \tilde{ f}^{-1}(c)/U(1)\xrightarrow{\sim}(\M\times\C)^{ss}/\C^*=W/\C^*.
  \end{equation}
  Moreover, $W/\C^*$ is compact.
\end{corollary}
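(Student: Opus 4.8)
The plan is to realize the homeomorphism as a continuous bijection between a compact space and a Hausdorff space, for which continuity of the inverse is automatic. Write $\Psi\colon\tilde{f}^{-1}(c)/U(1)\to W/\C^*$ for the map induced by the inclusion $\tilde{f}^{-1}(c)\hookrightarrow W$ composed with the quotient $W\to W/\C^*$; it is continuous because this composite is $U(1)$-invariant and $\tilde{f}^{-1}(c)\to\tilde{f}^{-1}(c)/U(1)$ is a quotient map, and it is well defined since $U(1)\subset\C^*$. I would also record that $W/\C^*$ is Hausdorff, being a geometric quotient for the proper action of Corollary~\ref{sec:symplectic-cuts-properness}.

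First I would prove that $\tilde{f}^{-1}(c)$ is compact. On this level set one has $\tfrac{1}{2}\|z\|^2=f([A,\Phi]_S)-c$, and since $f\leq0$ and $c<0$ this forces both $c\leq f([A,\Phi]_S)\leq0$ and $\|z\|^2\leq-2c$. Thus $\tilde{f}^{-1}(c)$ is a closed subset of $f^{-1}([c,0])\times\{z\in\C\colon\|z\|^2\leq-2c\}$, whose first factor is compact by the properness of $f$ and whose second factor is a closed disk; hence $\tilde{f}^{-1}(c)$ is compact. Since $W=\C^*\cdot\tilde{f}^{-1}(c)$ by Lemma~\ref{sec:symplectic-cuts-W=ss}, every $\C^*$-orbit in $W$ meets the level set, so $\Psi$ is surjective; consequently $W/\C^*=\Psi(\tilde{f}^{-1}(c)/U(1))$ is the continuous image of a compact space and is therefore compact, which already yields the last sentence of the statement.

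It remains to show that $\Psi$ is injective, which I expect to be the main obstacle. Given $p,q\in\tilde{f}^{-1}(c)$ with $q=t\cdot p$ for some $t\in\C^*$, I must show $q\in U(1)\cdot p$. The key point is that the whole $\C^*$-orbit of $p$ lies in a single stratum $Q\times\C$ (the strata are $\C^*$-invariant), on which $\tilde{f}$ is an honest smooth moment map for the $U(1)$-action by Proposition~\ref{sec:symplectic-cuts-moment-map-on-product}. Letting $\xi$ denote the fundamental vector field of the $U(1)$-action and parametrizing the positive-real part of the orbit by $\tau\mapsto e^\tau\cdot p$, the moment-map identity shows that $\tau\mapsto\tilde{f}(e^\tau\cdot p)$ is monotone, with
\[
  \Big|\frac{d}{d\tau}\tilde{f}(e^\tau\cdot p)\Big|=\|\xi_{e^\tau\cdot p}\|^2,
\]
vanishing exactly at the $U(1)$-fixed points. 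Writing $t=r\omega$ with $r=|t|$ and $\omega\in U(1)$, the $U(1)$-invariance of $\tilde{f}$ gives $\tilde{f}(r\cdot p)=\tilde{f}(q)=c=\tilde{f}(p)$, so $\tilde{f}(e^\tau\cdot p)$ equals $c$ at $\tau=0$ and at $\tau=\log r$. If $r\neq1$, monotonicity forces this function to be constant between these values, whence $\xi_p=0$, i.e. $p$ is $U(1)$-fixed; since the action is holomorphic, the real generator $\pm J\xi$ also vanishes at $p$, so $p$ is $\C^*$-fixed and $r\cdot p=p$, giving $q=\omega\cdot p\in U(1)\cdot p$. If $r=1$ this is immediate. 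Hence $\Psi$ is injective.

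Finally, $\Psi$ is a continuous bijection from the compact space $\tilde{f}^{-1}(c)/U(1)$ onto the Hausdorff space $W/\C^*$, hence a homeomorphism, and $W=(\M\times\C)^{ss}$ by Lemma~\ref{sec:symplectic-cuts-W=ss}, which is the assertion. The only delicate point beyond bookkeeping is the monotonicity step: it relies on the orbit remaining in a single stratum so that the stratum-wise moment map of Proposition~\ref{sec:symplectic-cuts-moment-map-on-product} can be invoked, and on the standard passage from a vanishing fundamental vector field to a $\C^*$-fixed point; everything else is formal.
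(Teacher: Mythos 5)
Your proof is correct and follows essentially the same route as the paper: a continuous bijection from the compact space $\tilde{f}^{-1}(c)/U(1)$ onto the Hausdorff geometric quotient $W/\C^*$, with surjectivity from Lemma~\ref{sec:symplectic-cuts-W=ss} and injectivity reduced to a single stratum $Q\times\C$ where $\tilde f$ is an honest moment map. The only difference is that where the paper cites Kirwan's Lemma 7.2 for the injectivity step, you prove it directly via the monotonicity of $\tilde f$ along the $\R_{>0}$-part of the orbit; that argument is the standard proof of the cited lemma and is sound.
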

\begin{proof}
  Since $ f$ and the norm $\|\cdot\|$ on $\C$ are proper, $ f^{-1}(c)$ is
  compact. Therefore, $\tilde{ f}^{-1}(c)/U(1)$ is also compact. Moreover,
  since $(\M\times\C)^{ss}/\C^*$ is Hausdorff, to show that the map is a
  homeomorphism, it suffices to show that it is a continuous bijection. The
  continuity is obvious. By Lemma~\ref{sec:symplectic-cuts-W=ss}, the
  surjectivity is clear.

  To show the injectivity, suppose that $([A_1,\Phi_1],z_1)$ and
  $([A_2,\Phi_2],z_2)$ lie in $\tilde{ f}^{-1}(c)$ and the same $\C^*$-orbit.
  Since each orbit type stratum in $\M$ is $\C^*$-invariant, they lie in
  $Q\times\C$ for some stratum $Q$ in $\M$. By
  Proposition~\ref{sec:symplectic-cuts-moment-map-on-product},
  $\tilde{ f}|_{Q\times\C}$ is a moment map for the $U(1)$-action on
  $Q\times\C$ with respect to the product K\"ahler form on $Q\times\C$. Hence,
  $([A_1,\Phi_1],z_1)$ and $([A_2,\Phi_2],z_2)$ must lie in the same
  $U(1)$-orbit by general properties of moment maps (see \cite[Lemma
  7.2]{Kirwan1984}).
\end{proof}
\begin{proof}[Proof of Theorem~\ref{sec:introduction-compactify}]
  Write $W=(\M\setminus h^{-1}(0)\times\{0\})\cup(\M\times\C^*)$. Note that it is
  a disjoint union. Let $W^*=\M\times\C^*$ and consider the map
  \begin{equation}
    W^*\to\M,\qquad ([A,\Phi]_S,z)\mapsto z^{-1}[A,\Phi]_S.
  \end{equation}
  Since it is $\C^*$-invariant, it induces a well-defined map
  $(\M\times\C^*)/\C^*\to\M$. The injectivity is clear. Its inverse is given by
  $[A,\Phi]_S\mapsto([A,\Phi]_S,1)$.

  Then, we show that it is a biholomorphism. Since $\M$ is normal, $\M\times\C$
  is also normal. Therefore, both $W$ and $W^*$ are normal. As categorical
  quotients of normal spaces, $W^*/\C^*$ and $W/\C^*$ are also normal. Moreover, fibers of $W^*\to
  W^*/\C^*$ have pure dimension 1. Since $\M^s$ is pure dimensional,
  Proposition~\ref{sec:orbit-type-strat-closure-stratum} implies that $\M$ and
  hence $W^*$ are pure dimensional. Therefore, by Remmert's rank theorem (see
  \cite[Proposition 1.21]{Andreotti1971}), we conclude that $W^*/\C^*$
  is pure dimensional, and
  \begin{equation}
    \dim W^*/\C^*=\dim W^*-\dim\C^*=\dim\M.
  \end{equation}
  Then, by \cite[p.166, Theorem]{Grauert1984}, the map $W^*/\C^*\to\M$
  is a biholomorphism.

  Since $W/\C^*$ is compact, we have shown that $\M$ admits a compactification
  \begin{equation}
    W/\C^*=\M\cup Z,
  \end{equation}
  where $Z=(\M\setminus h^{-1}(0)\times\{0\})/\C^*$ is of pure codimension 1.
\end{proof}

Finally, we prove the following result that will be used later. Note that $W$
inherits a stratification from $\M\times\C$. More precisely, $W$ is a disjoint
union of $Q_W=W\cap(Q\times\C)$ where $Q$ ranges in the stratification of $\M$.
Moreover, if necessary, we may also refine this stratification into connected
components. The following shows that how $Q_W/\C^*$ fits together in $\bar{\M}$.
\begin{proposition}\ \label{sec:symplectic-cuts-stratum-in-Mbar}
  Let $\pi\colon W\to \bar{\M}$ be the quotient map.
  \begin{enumerate}
  \item Each $\pi(\bar{Q_W})$ is a closed complex subspace of $\bar{\M}$, where
    the closure is taken in $W$.

  \item Each $\pi(Q_W)$ is a locally closed complex subspace of $\bar{\M}$, and
    its closure is precisely $\pi(\bar{Q_W})$.

  \item If $\pi(Q_W)\cap\bar{\pi(S_W)}\neq\emptyset$, then
    $\pi(Q_W)\subset\bar{\pi(S_W)}$.

  \item $\bar{\M}$ is a disjoint union of $\pi(Q_W)$.
    
  \item The restriction $\pi\colon Q_W\to \pi(Q_W)$ is the analytic Hilbert
    quotient of $Q_W$ by $\C^*$. Moreover, the inclusion
    $(\tilde{ f}|_{Q_W})^{-1}(c)\hookrightarrow Q_W$ induces a homeomorphism
    \begin{equation}
      (\tilde{ f}|_{Q_W})^{-1}(c)/U(1)\xrightarrow{\sim} Q_W/\C^*.
    \end{equation}
  \end{enumerate}
\end{proposition}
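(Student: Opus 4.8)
The plan is to use throughout that $\pi\colon W\to\bar{\M}$ is a \emph{geometric} quotient, so that its fibres are exactly the $\C^*$-orbits; consequently $\pi$ is open and surjective, for every $\C^*$-invariant $A\subseteq W$ one has $\pi^{-1}(\pi(A))=A$, images of disjoint invariant sets are disjoint, and $\pi(\bar{A})\subseteq\overline{\pi(A)}$ by continuity. The first computation I would record is that, since $W$ is open in $\M\times\C$ and $Q$ is locally closed in $\M$ with closure the closed complex subspace $\bar{Q}$ (Proposition~\ref{sec:orbit-type-strat-closure-stratum}), the closure of $Q_W$ in $W$ is $\bar{Q_W}=W\cap(\bar{Q}\times\C)$, a $\C^*$-invariant closed complex subspace of $W$. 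Writing $\partial Q=\bar{Q}\setminus Q=\bar{Q_{l_1}}\cup\cdots\cup\bar{Q_{l_k}}$ as a finite union of stratum closures, $\partial Q$ is a closed complex subspace of $\M$, so $W\cap(\partial Q\times\C)$ is a $\C^*$-invariant closed complex subspace of $W$, and $\bar{Q_W}\setminus Q_W=W\cap(\partial Q\times\C)$.

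For $(1)$ I would invoke the standard restriction property of analytic Hilbert quotients: if $A\subseteq W$ is a $\C^*$-invariant closed complex subspace, then $\pi(A)$ is a closed complex subspace of $\bar{\M}$ and $\pi|_A\colon A\to\pi(A)$ is the analytic Hilbert quotient of $A$ (see \cite{Heinzner1998} and \cite{Heinzner1999}). Applied to $A=\bar{Q_W}$ this gives $(1)$ and exhibits $\pi(\bar{Q_W})$ as $\bar{Q_W}\sslash\C^*$. For $(2)$, applying the same property to $A=W\cap(\partial Q\times\C)$ shows $\pi(\bar{Q_W}\setminus Q_W)$ is a closed complex subspace. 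Since $Q_W$ and $\bar{Q_W}\setminus Q_W$ are disjoint invariant sets, their images are disjoint, so $\pi(Q_W)=\pi(\bar{Q_W})\setminus\pi(\bar{Q_W}\setminus Q_W)$ is open in the closed complex subspace $\pi(\bar{Q_W})$, hence a locally closed complex subspace; and $\overline{\pi(Q_W)}=\pi(\bar{Q_W})$ follows from $\pi(Q_W)\subseteq\pi(\bar{Q_W})$ together with $\pi(\bar{Q_W})=\pi(\overline{Q_W})\subseteq\overline{\pi(Q_W)}$.

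For $(5)$, note that $Q_W$ is open in $\bar{Q_W}$ (its complement $W\cap(\partial Q\times\C)$ is closed) and $\C^*$-invariant, hence saturated for $\pi|_{\bar{Q_W}}$; as the restriction of an analytic Hilbert quotient to an open saturated subset is again an analytic Hilbert quotient, $\pi|_{Q_W}\colon Q_W\to\pi(Q_W)$ is the analytic Hilbert quotient of $Q_W$. For the homeomorphism I would restrict the identity $W=\C^*\tilde{f}^{-1}(c)$ of Lemma~\ref{sec:symplectic-cuts-W=ss} to the $\C^*$-invariant set $Q\times\C$ to obtain $Q_W=\C^*(\tilde{f}|_{Q_W})^{-1}(c)$, so that $Q_W$ is exactly the semistable locus of $Q\times\C$ for the shifted moment map $\tilde{f}-c$. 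This is the one place where the global weak-Kähler difficulty disappears: the stratum $Q\times\C$ is a genuine (strong) Kähler manifold carrying a Hamiltonian $U(1)$-action with moment map $\tilde{f}|_{Q\times\C}$ (Proposition~\ref{sec:symplectic-cuts-moment-map-on-product}) and a proper $\C^*$-action, so the Kähler GIT of Heinzner and Loose \cite{Heinzner1994} applies directly on the stratum and yields the homeomorphism $(\tilde{f}|_{Q_W})^{-1}(c)/U(1)\xrightarrow{\sim}Q_W/\C^*$; alternatively one repeats the argument of Corollary~\ref{sec:symplectic-cuts-symplectic-quotient}, using \cite[Lemma 7.2]{Kirwan1984} for injectivity. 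I expect this homeomorphism to be the main obstacle, since it is exactly here that one must leave the weakly Kähler space $\M\times\C$ and pass to the smooth stratum, and one must verify that the semistable locus there is precisely $Q_W$.

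Finally, $(3)$ and $(4)$ are formal. For $(4)$, $W=\coprod_Q Q_W$ and the surjectivity of $\pi$ give $\bar{\M}=\bigcup_Q\pi(Q_W)$, and the $\pi(Q_W)$ are pairwise disjoint because the $Q_W$ are disjoint invariant sets and $\pi$ is geometric. For $(3)$, if $\pi(Q_W)\cap\overline{\pi(S_W)}\neq\emptyset$ then, using $\overline{\pi(S_W)}=\pi(\bar{S_W})$ from $(2)$ and that fibres are orbits, I obtain $Q_W\cap\bar{S_W}\neq\emptyset$, i.e.\ $W\cap((Q\cap\bar{S})\times\C)\neq\emptyset$, whence $Q\cap\bar{S}\neq\emptyset$; the frontier condition for the complex Whitney stratification of $\M$ (Proposition~\ref{sec:orbit-type-strat-main-results}) then forces $Q\subseteq\bar{S}$, so $Q_W\subseteq\bar{S_W}$ and $\pi(Q_W)\subseteq\pi(\bar{S_W})=\overline{\pi(S_W)}$.
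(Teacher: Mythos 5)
Your proposal is correct and follows essentially the same route as the paper: closedness of $\bar{Q_W}$ plus the restriction property of analytic Hilbert quotients for invariant closed complex subspaces gives $(1)$, the geometric-quotient property handles $(2)$--$(4)$, and $(5)$ comes from restricting Corollary~\ref{sec:symplectic-cuts-symplectic-quotient} (equivalently, rerunning its argument on the genuinely K\"ahler stratum $Q\times\C$) to the invariant set $Q_W$. The only cosmetic differences are that the paper deduces local closedness in $(2)$ from openness of the quotient map $\bar{Q_W}\to\pi(\bar{Q_W})$ rather than from your complement argument, and that your write-up of $(5)$ supplies details the paper leaves implicit.
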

\begin{proof}
  Fix a stratum $Q$ in $\M$. By
  Proposition~\ref{sec:orbit-type-strat-closure-stratum}, $\bar{Q_W}$ is a
  closed complex subspace of $W$ that is also $\C^*$-invariant. Therefore,
  \cite[\S1(ii)]{Heinzner1998} implies that $\pi(\bar{Q_W})$ is a closed complex
  subspace of $\bar{M}$. Moreover, the restriction
  $\pi\colon\bar{Q_W}\to\pi(\bar{Q_W})$ is also an analytic Hilbert quotient.
  This proves $(1)$.

  Since $Q_W$ is open in $\bar{Q_W}$, and $\pi\colon \bar{Q_W}\to\pi(\bar{Q_W})$ is an
  open map, $\pi(Q_W)$ is open in $\pi(\bar{Q_W})$. Moreover, the continuity of
  $\pi$ shows that $\pi(\bar{Q_W})\subset\bar{\pi(Q_W)}$. Since $\pi(\bar{Q_W})$
  is closed in $\bar{M}$, we have $\pi(\bar{Q_W})=\bar{\pi(Q_W)}$. This proves
  $(2)$.

  If $\pi(Q_W)\cap\bar{\pi(S_W)}\neq\emptyset$ for some stratum $S$ in $\M$,
  then $\pi(Q_W)\cap\pi(\bar{S_W})\neq\emptyset$. Since $\pi\colon W\to\bar{M}$
  is a geometric quotient, and both $Q_W$ and $\bar{S_W}$ are $\C^*$-invariant,
  we conclude that $Q_W\cap\bar{S_W}\neq\emptyset$. Therefore,
  $Q_W\subset\bar{S_W}$, and hence $\pi(Q_W)\subset\bar{\pi(S_W)}$. This shows
  $(3)$.

  Obviously, $\bar{\M}$ is a union of $\pi(Q_W)$ as $Q$ ranges in the
  stratification of $\M$. Since each $Q_W$ is $\C^*$-invariant, and $\pi\colon
  W\to\bar{\M}$ is a geometric quotient, it is a disjoint union. This proves
  $(4)$.

  Finally, $(5)$ immediately follows from
  Corollary~\ref{sec:symplectic-cuts-symplectic-quotient} and the fact that
  $Q_W$ is $\C^*$-invariant.
\end{proof}

\subsection{Projectivity}
In this section, we will prove $(2)$ and $(3)$ in
Theorem~\ref{sec:introduction-positivity}. Let us start with the construction of
a line bundle on $\bar{\M}=W/\C^*$. Note that the $\C^*$-action on $\M$ lifts to
the line bundle $\sL\to\M$. This can be seen as follows. The $\C^*$-action on
$\Omega^{1,0}(\g_E^\C)$ lifts to the trivial line bundle by letting $\C^*$ act
on the fiber trivially. By construction of the line bundle $\bL\to\sC$, the
$\C^*$-action on $\sC$ lifts to $\bL$. Since the $\G^\C$-action and the
$\C^*$-action commutes, we see that the $\C^*$-action on $\bL$ descends to $\sL$
which covers the $\C^*$-action on $\M$. Then, consider the trivial line bundle
over $\C$. The $\C^*$-action on $\C$ lifts to the trivial line bundle by letting
$\C^*$ act on the fiber trivially. Moreover, we equip the trivial line bundle
with a Hermitian metric determined by
\begin{equation}
  |s|^2=\exp(-2\pi\chi),
\end{equation}
where $s(z)=(z,1)$ is a section of the trivial line bundle, and
$\chi(z)=\frac{1}{2}\|z\|^2$ is a K\"ahler potential for the standard K\"ahler
form on $\C$.

Now, we pullback the trivial line bundle on $\C$ and the line bundle $\sL\to\M$
to $\M\times\C$, and denote the resulting line bundle by $\sL_\C$. Moreover, we
equip the line bundle $\sL_\C\to\M\times\C$ with the product of the pullback
Hermitian metrics, and the $\C^*$-action on $\M\times\C$ lifts to $\sL_\C$. We
will still use the letter $h$ to denote the resulting Hermitian metric on
$\sL_\C$.
\begin{proposition}
  The Hermitian metric $h$ on $\sL_\C$ is smooth along $Q\times\C$ for every stratum
  $Q$ in $\M$. Moreover, its curvature on $Q\times\C$ is precisely $-2\pi\sqrt{-1}\omega_{Q\times\C}$.
\end{proposition}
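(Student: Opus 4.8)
The plan is to observe that over a stratum $Q\times\C$ the Hermitian line bundle $(\sL_\C,h)$ is nothing but the external product of two factors whose smoothness and curvature have already been computed, namely the restriction of $(\sL,h)$ to $Q$ and the trivial Hermitian line bundle on $\C$. Since Chern curvature is additive under tensor products and compatible with pullback, and since the product K\"ahler form $\omega_{Q\times\C}$ is by definition the sum of the pullbacks of $\omega_Q$ and of the standard K\"ahler form on $\C$, the statement will follow by assembling results from Section~\ref{sec:kahler-metric-moduli} with one elementary computation on the $\C$-factor.

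First I would record the two curvature formulas. On the stratum $Q$ the metric $h$ on $\sL$ is smooth and has curvature $-2\pi\sqrt{-1}\omega_Q$; this is exactly what the proof of Proposition~\ref{sec:kahler-metric-moduli-pluri-on-stratum} shows, since there $u_0|_Q$, which is the restriction to $Q$ of $-\frac{1}{2\pi}\log|s|_h^2$, is a smooth K\"ahler potential for $\omega_Q$, whence the Chern curvature $-\partial\bar{\partial}\log|s|_h^2=2\pi\,\partial\bar{\partial}u_0$ restricts on $Q$ to $-2\pi\sqrt{-1}\omega_Q$. On the $\C$-factor, the trivial bundle carries the metric $|s|^2=\exp(-2\pi\chi)$ with $\chi(z)=\frac{1}{2}\|z\|^2$, and its canonical nowhere-vanishing holomorphic section $s$ gives
\begin{equation}
  -\partial\bar{\partial}\log|s|^2=2\pi\,\partial\bar{\partial}\chi=-2\pi\sqrt{-1}\,\omega_\C,
\end{equation}
where $\omega_\C=\sqrt{-1}\partial\bar{\partial}\chi$ is the standard (positive) K\"ahler form on $\C$. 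Both metrics are manifestly smooth.

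Then I would assemble. By construction $\sL_\C$ is the tensor product of the pullbacks of $\sL\to\M$ and of the trivial bundle on $\C$ along the two projections from $\M\times\C$, and $h$ is the product of the pulled-back metrics. Restricting to $Q\times\C$ and using that these projections restrict to the projections $Q\times\C\to Q$ and $Q\times\C\to\C$, the metric $h|_{Q\times\C}$ is the external product of the smooth metric on $\sL|_Q$ and the smooth trivial metric on $\C$; hence $h$ is smooth along $Q\times\C$. By additivity of Chern curvature under tensor products and its compatibility with pullback, the curvature of $h|_{Q\times\C}$ is the sum of the pullbacks of $-2\pi\sqrt{-1}\omega_Q$ and $-2\pi\sqrt{-1}\omega_\C$, which is precisely $-2\pi\sqrt{-1}\omega_{Q\times\C}$, as claimed.

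Since all the genuine analytic content---the stratum-wise smoothness of $h$ on $\sL$ and the identification of its curvature with $-2\pi\sqrt{-1}\omega_Q$---was already carried out in Section~\ref{sec:kahler-metric-moduli}, I do not expect any serious obstacle. The only points that require care are bookkeeping: confirming that the stratum-wise curvature identity for $\sL$ is the one packaged in the K\"ahler-potential computation of Proposition~\ref{sec:kahler-metric-moduli-pluri-on-stratum}, and fixing the sign and normalization on the $\C$-factor so that $\omega_\C$ emerges as the standard positive K\"ahler form with potential $\chi$. Note that the claim is only stratum-wise, so the argument legitimately reduces to the product structure on $Q\times\C$ and never needs global smoothness of $h$ on $\M$.
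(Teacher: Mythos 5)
Your proof is correct and follows essentially the same route as the paper, which simply cites Proposition~\ref{sec:kahler-metric-moduli-pluri-on-stratum} for the curvature of $h$ on $\sL$ along $Q$ and then appeals to the product construction of $h$ on $\sL_\C$. You merely spell out the additivity of curvature under the external tensor product and the elementary computation on the $\C$-factor, which the paper leaves implicit.
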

\begin{proof}
  By Proposition~\ref{sec:kahler-metric-moduli-pluri-on-stratum}, the curvature
  of the Hermitian metric on $\sL$ along $Q$ is $-2\pi\sqrt{-1}\omega_Q$. By the
  construction of $h$ on $\sL_\C$, we see that the curvature of $h$ along
  $Q\times\C$ must be $-2\pi\sqrt{-1}\omega_{Q\times\C}$.
\end{proof}

By construction, if $p=([A,\Phi]_S,z)\in\M\times\C$, then $(\C^*)_p$ acts
trivially on $(\sL_\C)_p$. As a consequence, we obtain the following.
\begin{proposition}\label{sec:projectivity-descent}
  The canonical line bundle $\sL_\C\to\M\times\C$ descends to $\bar{\M}$. In other
  words, there is a line bundle $\bar{\sL}\to \bar{\M}$ such that $\pi^*\bar{\sL}=\sL_\C|_W$,
  where $\pi\colon W\to W/\C^*=\bar{\M}$ is the quotient map.
\end{proposition}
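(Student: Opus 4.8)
The plan is to apply the descent lemma for analytic Hilbert quotients (Proposition~\ref{sec:desc-lemm-vect-descent-analytic-quotient}) to the reductive group $G=\C^*$, the base $X=W$, and the holomorphic $G$-bundle $E=\sL_\C|_W$. First I would record that three of the four hypotheses hold automatically: $\C^*$ is a complex reductive Lie group; the analytic Hilbert quotient $\pi\colon W\to W/\C^*=\bar{\M}$ exists, in fact as a geometric quotient, as established just after Corollary~\ref{sec:symplectic-cuts-properness}; and $\sL_\C|_W$ is a holomorphic $\C^*$-bundle since the diagonal $\C^*$-action on $\M\times\C$ has already been lifted to $\sL_\C$.

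The only substantive point is then the fiberwise-triviality hypothesis of the descent lemma: for every $p\in W$ whose $\C^*$-orbit is closed in $W$, the stabilizer $(\C^*)_p$ must act trivially on $(\sL_\C)_p$. This is supplied by the remark preceding the statement, which gives the stronger assertion that $(\C^*)_p$ acts trivially on $(\sL_\C)_p$ for \emph{every} $p=([A,\Phi]_S,z)\in\M\times\C$, a fortiori at closed-orbit points. To justify that remark I would split into two cases. When $z\neq0$ the stabilizer $(\C^*)_p$ is contained in the stabilizer of $z$ under the multiplication action, which is trivial, so nothing is to check. When $z=0$ one has $(\C^*)_p=(\C^*)_{[A,\Phi]_S}$, and the claim follows because each of the two factors building $\sL_\C$ carries a fiberwise-trivial $\C^*$-action by construction: the action on the trivial line bundle over $\C$ is declared trivial on the fiber, while the action on the pullback of $\sL$ is descended from the action on $\bL\to\sC$, which fixes the $\A$-factor and is trivial on the fiber of the trivial bundle over $\Omega^{1,0}(\g_E^\C)$.

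With every hypothesis verified, Proposition~\ref{sec:desc-lemm-vect-descent-analytic-quotient} will produce a holomorphic bundle $\bar{\sL}\to\bar{\M}$ with $\pi^*\bar{\sL}=\sL_\C|_W$, together with the sheaf identity $\sO(\bar{\sL})=\pi_*\sO(\sL_\C|_W)^{\C^*}$; since $\sL_\C$ has rank one and $\pi$ is surjective, $\bar{\sL}$ is automatically a line bundle, which is what is claimed. I do not expect any genuine obstacle here: the hard analytic work (the properness of the $\C^*$-action yielding the analytic Hilbert quotient, and the proof of the descent lemma itself) has already been done, so the argument is essentially a matter of matching the hypotheses, with the only care needed in the bookkeeping of the stabilizer action over the fixed locus $z=0$, which the preceding remark settles.
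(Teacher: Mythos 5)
Your overall route is exactly the paper's: the proof there is a one-line appeal to Proposition~\ref{sec:desc-lemm-vect-descent-analytic-quotient}, with the stabilizer hypothesis supplied by the remark immediately preceding the statement that $(\C^*)_p$ acts trivially on $(\sL_\C)_p$ for every $p\in\M\times\C$. Your bookkeeping of the remaining hypotheses (reductivity of $\C^*$, existence of the analytic Hilbert quotient $W\to W/\C^*$, the lift of the action to $\sL_\C$) matches what the paper takes for granted, and your observation that only closed-orbit points matter is fine since $W/\C^*$ is a geometric quotient, so every orbit in $W$ is closed.

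The one place where you go beyond the paper is your justification of the stabilizer claim at $z=0$, and that is where there is a gap. You argue that $(\C^*)_{[A,\Phi]_S}$ acts trivially on $\sL_{[A,\Phi]_S}$ because the lift of the $\C^*$-action to $\bL\to\sC$ is fiberwise trivial (identity on the Donaldson factor, trivial on the fiber of the trivial factor). That statement is about the action upstairs on $\B^{ss}$, where $t$ moves the point $(A,\Phi)$ to $(A,t\Phi)$; it does not by itself describe the induced action on the \emph{descended} bundle $\sL\to\M$. If $t$ fixes $[A,\Phi]_S$ with $t\neq1$, then in general $(A,t\Phi)\neq(A,\Phi)$ in $\B^{ss}$; rather $(A,t\Phi)=(A,\Phi)\cdot g$ for some $g\in\G^\C$, and the action of $t$ on the fiber of $\sL$ is the composite of the (trivial) lift $\bL_{(A,\Phi)}\to\bL_{(A,t\Phi)}$ with the identification $\bL_{(A,\Phi)\cdot g}\to\bL_{(A,\Phi)}$ coming from the $\G^\C$-linearization. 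The element $g$ is a holomorphic automorphism of $\E_A$ conjugating $\Phi$ to $t\Phi$; it is \emph{not} in the stabilizer $\G^\C_{(A,\Phi)}$, so Proposition~\ref{sec:kahler-metric-moduli-line-bundle-descent} does not apply to it, and its action on the Donaldson determinant line (essentially $\det(g\mid H^\bullet)$ of a twist of $\E_A$) is not obviously the identity. Points of $W$ with $z=0$ and finite nontrivial $\C^*$-stabilizer do occur (e.g.\ $\E=L\oplus L^{-1}$ with off-diagonal $\Phi$, fixed by $\mu_2$ via $g=\mathrm{diag}(1,-1)$), so this case cannot be dismissed. The paper itself only asserts the triviality ``by construction'' and is no more explicit, but the argument you supply does not actually close this step; to make it rigorous you would need to compute the character by which such a $g$ acts on the determinant line at these points (or pass to a tensor power of $\sL_\C$ for which the character is killed).
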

\begin{proof}
  This follows from
  Proposition~\ref{sec:desc-lemm-vect-descent-analytic-quotient}.
\end{proof}
Since the Hermitian metric $h$ on $\sL_\C$ is preserved by the $U(1)$-action,
Corollary~\ref{sec:symplectic-cuts-symplectic-quotient} implies that $h$ induces
a continuous Hermitian metric $\bar{h}$ on $\bar{\sL}\to\bar{\M}$. Although $h$ is
smooth along $Q_W$ for each stratum $Q$ in $\M$, $\bar{h}$ may not be smooth along
$\pi(Q_W)$. That said, we note that $Q_W$ is a K\"ahler manifold with a
$\C^*$-action such that the induced $U(1)$-action is Hamiltonian with respect to
the K\"ahler form $\omega_{Q_W}$ on $Q_W$. Hence, by \cite[Theorem
2.10]{Sjamaar1995} and Proposition~\ref{sec:symplectic-cuts-stratum-in-Mbar}, we
may further stratify $\pi(Q_W)$ by $\C^*$-orbit types. Since the curvature of
$(\sL_\C,h)$ on $Q_W$ is $-2\pi\sqrt{-1}\omega_{Q_W}$, by \cite[Lemma
2.16]{Sjamaar1995}, we conclude the following.
\begin{proposition}\label{sec:projectivity-curvature-h0}
  For each stratum $Q$ in $\M$, $\pi(Q_W)$ admits a $\C^*$-orbit type
  stratification such that each stratum $S$ is a locally closed K\"ahler
  submanifold of $\pi(Q_W)$ with K\"ahler form $\omega_S$. Moreover, the
  Hermitian metric $\bar{h}$ on $\bar{\sL}$ is smooth along $S$, and the its curvature
  on $S$ is precisely $-2\pi\sqrt{-1}\omega_S$.
\end{proposition}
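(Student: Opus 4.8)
The plan is to realize $\pi(Q_W)$ as a singular K\"ahler quotient and then import Sjamaar's structure theory. First I would assemble the data. Since $Q$ is a smooth stratum of $\M$, the product $Q\times\C$ is a genuine K\"ahler manifold with the product form $\omega_{Q\times\C}$, and $Q_W=W\cap(Q\times\C)$ is a $\C^*$-invariant open K\"ahler submanifold, with K\"ahler form $\omega_{Q_W}$. By Proposition~\ref{sec:symplectic-cuts-moment-map-on-product} the restriction $\tilde{f}|_{Q_W}$ is a moment map for the induced $U(1)$-action, so $(Q_W,\omega_{Q_W},\tilde{f}|_{Q_W})$ is a Hamiltonian $U(1)$-K\"ahler manifold. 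By Proposition~\ref{sec:symplectic-cuts-stratum-in-Mbar}(5) the inclusion of the level set gives a homeomorphism $(\tilde{f}|_{Q_W})^{-1}(c)/U(1)\xrightarrow{\sim}Q_W/\C^*=\pi(Q_W)$; in other words $\pi(Q_W)$ is precisely the symplectic reduction of $Q_W$ at the level $c$. Since $Q_W$ is $\C^*$-saturated in $W$, the restriction $\bar{\sL}|_{\pi(Q_W)}$ is exactly the descent of $\sL_\C|_{Q_W}$.

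For the stratification, I would replace $\tilde{f}|_{Q_W}$ by the shifted moment map $\tilde{f}|_{Q_W}-c$, which has zero level set $(\tilde{f}|_{Q_W})^{-1}(c)$ and leaves the symplectic form unchanged, so that \cite[Theorem 2.10]{Sjamaar1995} applies to a level-$0$ reduction. This produces a stratification of the reduced space by $U(1)$-orbit types in which each piece is a locally closed complex, hence K\"ahler, submanifold carrying the reduced form $\omega_S$ characterized by $p^*\omega_S=\iota^*\omega_{Q_W}$, where $\iota$ and $p$ denote the inclusion of and the $U(1)$-quotient on $(\tilde{f}|_{Q_W})^{-1}(c)$. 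Because every finite subgroup of $\C^*$ already lies in $U(1)$, the $U(1)$-stabilizer of a point on the level set equals the $\C^*$-stabilizer of its closed orbit, so these pieces correspond exactly to the $\C^*$-orbit type pieces of $Q_W/\C^*$; transporting through the homeomorphism above yields the asserted $\C^*$-orbit type stratification of $\pi(Q_W)$ into locally closed K\"ahler submanifolds $S$ with K\"ahler forms $\omega_S$.

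For the statement about $\bar{h}$, I would feed the prequantization data into \cite[Lemma 2.16]{Sjamaar1995}. The bundle $\sL_\C|_{Q_W}$ is a $U(1)$-equivariant Hermitian line bundle whose metric $h$ is $U(1)$-invariant and whose curvature is $-2\pi\sqrt{-1}\omega_{Q_W}$, and $\C^*$ acts trivially on its fibers, so by Proposition~\ref{sec:projectivity-descent} it descends to $\bar{\sL}$ with $h$ descending to $\bar{h}$. Along the level set $(\tilde{f}|_{Q_W})^{-1}(c)$ the vertical part of the infinitesimal action is the constant $c$ and $\iota^*d\tilde{f}=0$, so the curvature $-2\pi\sqrt{-1}\omega_{Q_W}$ restricts to a $U(1)$-basic form there and projects to the reduced form on each stratum. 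This is exactly the input for \cite[Lemma 2.16]{Sjamaar1995}, which then shows that $\bar{h}$ is smooth along each $S$ and that its curvature on $S$ is $-2\pi\sqrt{-1}\omega_S$.

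The main obstacle is the bookkeeping forced by the nonzero, and possibly nonintegral, level $c$. I would need to check that the analytic Hilbert descent of Proposition~\ref{sec:projectivity-descent}, which builds $\bar{\sL}$ from $W\to W/\C^*$ using only the triviality of the fiber action, genuinely coincides with the symplectically reduced bundle of \cite[Lemma 2.16]{Sjamaar1995}; the triviality of the fiber action is precisely what removes the integrality condition that a naive prequantum reduction at a nonzero level would otherwise impose. I would also confirm that shifting the moment map by $c$ does not alter $\omega_S$, so that the curvature is measured against the reduction at level $c$ and not at level $0$, and verify the smoothness of $\bar{h}$ along $S$ directly from the fact that $\pi$ restricts to a smooth principal (orbifold) $U(1)$-bundle over each stratum $S$.
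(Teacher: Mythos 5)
Your proposal follows essentially the same route as the paper: the paper likewise treats $Q_W$ as a Hamiltonian $U(1)$-K\"ahler manifold via Proposition~\ref{sec:symplectic-cuts-moment-map-on-product}, identifies $\pi(Q_W)$ with the reduction $(\tilde{f}|_{Q_W})^{-1}(c)/U(1)$ through Proposition~\ref{sec:symplectic-cuts-stratum-in-Mbar}, and then invokes \cite[Theorem 2.10]{Sjamaar1995} for the orbit type stratification and \cite[Lemma 2.16]{Sjamaar1995} for the smoothness and curvature of $\bar{h}$. Your extra care about the level shift by $c$ and the compatibility of the analytic Hilbert descent with the symplectically reduced bundle is a sensible elaboration of details the paper leaves implicit, not a different argument.
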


Now we are ready to prove that the line bundle $\bar{\sL}\to\bar{\M}$ is ample. The
first step is the following.
\begin{lemma}
  The Chern current $c_1(\bar{\sL},\bar{h})$ of $(\bar{\sL},\bar{h})$ is positive, where
  \begin{equation}
    c_1(\bar{\sL},\bar{h})=\frac{\sqrt{-1}}{2\pi}\bar{\partial}\partial\log|s|_{\bar{h}}^2,
  \end{equation}
  and $s$ is any local holomorphic section of $\bar{\sL}$ that is nowhere vanishing.
\end{lemma}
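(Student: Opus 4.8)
The plan is to reduce positivity of the current to plurisubharmonicity of a local potential, and then to deduce that plurisubharmonicity from the stratum-wise curvature computation of Proposition~\ref{sec:projectivity-curvature-h0} by way of an extension theorem. First I would fix a nowhere-vanishing local holomorphic section $s$ of $\bar{\sL}$ over an open set $V\subset\bar{\M}$ and put $\bar{u}=-\frac{1}{2\pi}\log|s|_{\bar{h}}^2$, so that $\log|s|_{\bar{h}}^2=-2\pi\bar{u}$ and, using $\bar{\partial}\partial=-\partial\bar{\partial}$,
\begin{equation*}
  c_1(\bar{\sL},\bar{h})=\frac{\sqrt{-1}}{2\pi}\bar{\partial}\partial\log|s|_{\bar{h}}^2=\sqrt{-1}\partial\bar{\partial}\bar{u}
\end{equation*}
as currents on $V$. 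This expression is independent of the choice of $s$, since replacing $s$ by $\lambda s$ with $\lambda$ a nowhere-vanishing holomorphic function changes $\bar{u}$ by the pluriharmonic term $-\frac{1}{2\pi}\log|\lambda|^2$. Hence it suffices to prove that $\bar{u}$ is plurisubharmonic, because $\sqrt{-1}\partial\bar{\partial}\bar{u}\geq0$ as a current is exactly that condition. Observe at the outset that $\bar{u}$ is continuous on $V$: the descended metric $\bar{h}$ is continuous by Corollary~\ref{sec:symplectic-cuts-symplectic-quotient}, and $s$ is holomorphic and nowhere vanishing.

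Next I would single out the locus where plurisubharmonicity is manifest, namely the top-dimensional stratum. The subset $\M^s\subset\M\subset\bar{\M}$ is open and dense and of the full dimension $\dim\bar{\M}$, and it is one of the K\"ahler strata of $\bar{\M}$ produced by Proposition~\ref{sec:symplectic-cuts-stratum-in-Mbar}: under $\M\cong(\M\times\C^*)/\C^*$ the points of $\M$ have trivial $\C^*$-stabilizer, so the strata of $\bar{\M}$ lying in $\M$ are exactly the orbit-type strata of $\M$, and $\M^s$ is the open one. By Proposition~\ref{sec:projectivity-curvature-h0}, $\bar{h}$ is smooth along this stratum with curvature $-2\pi\sqrt{-1}\omega_{\M^s}$, whence $\bar{u}$ is smooth on $\M^s$ and $\sqrt{-1}\partial\bar{\partial}(\bar{u}|_{\M^s})=\omega_{\M^s}>0$; in particular $\bar{u}$ is strictly plurisubharmonic on $\M^s$. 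Its complement $\bar{\M}\setminus\M^s$ is a closed, nowhere-dense analytic subset: it is the union of $Z=\bar{\M}\setminus\M$, a closed complex subspace of pure codimension $1$ by Theorem~\ref{sec:introduction-compactify}, with the closure of $\M\setminus\M^s$, which is analytic of codimension $\geq4g-6\geq2$ by Corollary~\ref{sec:orbit-type-strat-codimension-estimate} and Proposition~\ref{sec:orbit-type-strat-closure-stratum}.

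Finally I would invoke the extension theorem for plurisubharmonic functions on normal complex spaces. Since $\bar{\M}$ is normal (Theorem~\ref{sec:introduction-compactify}), and setting $A=\bar{\M}\setminus\M^s$, we have that $A$ is a nowhere-dense analytic subset of $V$, that $\bar{u}$ is plurisubharmonic on $V\setminus A$, and that $\bar{u}$ is continuous and hence locally bounded near $A$; therefore the Grauert--Remmert extension theorem (cf.\ \cite{Grauert1956}) applies, so the upper-semicontinuous regularization of $\bar{u}|_{V\setminus A}$ is plurisubharmonic on $V$, and by continuity it coincides with $\bar{u}$. Thus $\bar{u}$ is plurisubharmonic on $V$, and since $V$ was arbitrary, $c_1(\bar{\sL},\bar{h})=\sqrt{-1}\partial\bar{\partial}\bar{u}\geq0$, as asserted.

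I expect the genuine obstacle to be the extension across the codimension-one component $Z$ of $A$. Over a codimension-one analytic set the removable-singularity theorem for plurisubharmonic functions fails without a boundedness hypothesis, and here it is precisely the continuity of $\bar{u}$ — equivalently, the continuity of the descended metric $\bar{h}$ coming from the symplectic-cut description in Corollary~\ref{sec:symplectic-cuts-symplectic-quotient} — that supplies local boundedness near $Z$ and makes the argument go through; across the remaining part $\overline{\M\setminus\M^s}$, which has codimension $\geq2$, no such hypothesis is needed.
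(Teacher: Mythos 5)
Your proposal is correct and follows essentially the same route as the paper: exhibit the local potential $-\tfrac{1}{2\pi}\log|s|_{\bar h}^2$ as continuous everywhere and strictly plurisubharmonic on an open dense set where Proposition~\ref{sec:projectivity-curvature-h0} gives the curvature, then extend by normality of $\bar{\M}$ and the Grauert--Remmert theorem, with continuity supplying the boundedness needed across the codimension-one locus $Z$. The only (cosmetic) difference is that you take the dense set to be $\M^s$ itself, whereas the paper uses the slightly larger top $\C^*$-orbit-type stratum $S$ of $\pi((\M^s)_W)$, of which $\M^s\cong(\M^s\times\C^*)/\C^*$ is an open subset --- which is also the precise reason Proposition~\ref{sec:projectivity-curvature-h0} applies along $\M^s$ as you claim.
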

\begin{proof}
  By Proposition~\ref{sec:projectivity-descent}, for every open subset $U$ of
  $\bar{\M}$, we may choose a $\C^*$-equivariant holomorphic section $s$ of
  $\sL_\C$ over $\pi^{-1}(U)$ that is nowhere vanishing, where $\pi\colon
  W\to\bar{\M}$ is the quotient map. Then, we define
  \begin{equation}
    v=-\frac{1}{2\pi}\log|s|_h^2.
  \end{equation}
  Since $v$ is $U(1)$-invariant, by
  Corollary~\ref{sec:symplectic-cuts-symplectic-quotient}, the restriction of
  $v$ to $\pi^{-1}(U)\cap\tilde{ f}^{-1}(c)$ induces a well-defined continuous
  function $v_0\colon U\to\R$. If $Q=\M^s$, then
  Proposition~\ref{sec:symplectic-cuts-stratum-in-Mbar} and
  \ref{sec:projectivity-curvature-h0} imply that $\pi(Q_W)$ is open in
  $\bar{\M}$ and that $\pi(Q_W)$ admits a $\C^*$-orbit type stratification.
  Moreover, if $S$ is the top-dimensional stratum, then $S$ is open and dense in
  $\pi(Q_W)$. By Proposition~\ref{sec:projectivity-curvature-h0} again, the
  restriction of $v_0$ to $S$ is a K\"ahler potential for the K\"ahler form on
  $S$ so that $v_0|_S$ is strictly plurisubharmonic. Since $v_0$ is already
  continuous, and $\bar{\M}$ is normal, the extension theorem of
  plurisubharmonic functions (see~\cite{Grauert1956}) implies that $v_0\colon
  U\to\R$ is plurisubharmonic. Since
  $c_1(\bar{\sL},\bar{h})=\sqrt{-1}\partial\bar{\partial} v_0$, we see that
  $c_1(\bar{\sL},\bar{h})$ is positive.
\end{proof}
Then, the key result to show that $\bar{\sL}$ is ample is the following.
\begin{lemma}\label{sec:projectivity-bigness}
  For every closed irreducible complex subspace $Y$ of $\bar{\M}$ with $\dim
  Y>0$, the restriction of the line bundle $\bar{\sL}\to\bar{\M}$ to $Y$ is big.
\end{lemma}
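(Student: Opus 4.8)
The plan is to apply Popovici's bigness criterion \cite[Theorem 1.3]{Popovici2008}, which asserts that a holomorphic line bundle on a compact complex manifold carrying a singular Hermitian metric of positive curvature current is big as soon as the integral of the top power of the absolutely continuous part of that current is strictly positive. The Chern current $c_1(\bar{\sL},\bar{h})$ has already been shown to be positive, so the restriction of the local plurisubharmonic potentials to $Y$ exhibits $c_1(\bar{\sL}|_Y,\bar{h})$ as a closed positive current representing the first Chern class of $\bar{\sL}|_Y$. Hence the only substantial point is to produce a nonempty open subset of $Y$ on which this current is a genuine K\"ahler form of full rank $d:=\dim Y$, so that its top self-intersection contributes a strictly positive volume.

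To locate such an open set I would first assemble a global stratification of $\bar{\M}$. By Proposition~\ref{sec:symplectic-cuts-stratum-in-Mbar}, $\bar{\M}$ is a finite disjoint union of the $\pi(Q_W)$ (finitely many by Proposition~\ref{sec:orbit-type-strat-finitely-many}), and by Proposition~\ref{sec:projectivity-curvature-h0} each $\pi(Q_W)$ is a finite disjoint union of locally closed smooth K\"ahler strata $S$ along which $\bar{h}$ is smooth with curvature $-2\pi\sqrt{-1}\omega_S$. Writing $\bar{\M}=\bigsqcup_\alpha S_\alpha$ for this common refinement gives $Y=\bigsqcup_\alpha(Y\cap S_\alpha)$, a finite union of locally closed subsets. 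Since $Y$ is irreducible, it cannot equal the union of the proper closed subsets $\overline{Y\cap S_\alpha}$ unless one of them, say $\overline{Y\cap S_0}$, is all of $Y$; thus $Y\cap S_0$ is dense in $Y$, and since $S_0$ is locally closed this forces $Y\subset\bar{S_0}$ and $Y\cap S_0$ open in $Y$. Consequently $Y':=(Y\cap S_0)_{\mathrm{reg}}$ is a nonempty open dense complex submanifold of $Y$ of dimension $d$, sitting inside the K\"ahler manifold $S_0$. Restricting $\omega_{S_0}$ to the complex submanifold $Y'$ yields a genuine K\"ahler form $\omega'$, and by Proposition~\ref{sec:projectivity-curvature-h0} the current $c_1(\bar{\sL}|_Y,\bar{h})$ coincides with $\omega'$ on $Y'$.

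To make Popovici's criterion applicable I would pass to a resolution. Let $\nu\colon\tilde{Y}\to Y$ be a resolution of singularities factoring through the normalization, so that $\tilde{Y}$ is a compact complex manifold of dimension $d$ and $\nu$ is an isomorphism over the dense open set $Y'$. The pullback $\tilde{T}=\nu^*c_1(\bar{\sL}|_Y,\bar{h})$ is a closed positive current representing $c_1(\nu^*(\bar{\sL}|_Y))$, and over $\nu^{-1}(Y')$ its absolutely continuous part equals the smooth positive form $\nu^*\omega'$. Hence
\begin{equation}
  \int_{\tilde{Y}}(\tilde{T}_{\mathrm{ac}})^{d}\geq\int_{\nu^{-1}(Y')}(\nu^*\omega')^{d}>0,
\end{equation}
since $\omega'$ is positive definite on the $d$-dimensional manifold $Y'$. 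Popovici's criterion then shows that $\nu^*(\bar{\sL}|_Y)$ is big on $\tilde{Y}$. Finally, bigness descends to $Y$: it is unaffected by the finite normalization map, and for the proper modification of the normal space one has $\nu_*\mathcal{O}_{\tilde{Y}}=\mathcal{O}$, so the projection formula equates the relevant spaces of sections and transfers bigness downward.

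The main obstacle is the middle step: isolating the generic stratum $S_0$ and verifying that the restricted metric is not merely positive but strictly positive of full rank $d$ on an open dense subset of $Y$. Everything hinges on the compatibility of the irreducible $Y$ with the $\C^*$-orbit type stratification and on the precise identification of the curvature along smooth strata in Proposition~\ref{sec:projectivity-curvature-h0}; once strict positivity is secured on $Y'$, the volume estimate and the invocation of Popovici's theorem are routine. A secondary technical point is the descent of bigness through the resolution, which must be routed through the normalization of $Y$ since $Y$ need not be normal a priori.
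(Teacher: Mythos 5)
Your proposal is correct and follows essentially the same route as the paper: isolate a nonempty open subset of $Y$ lying in a smooth K\"ahler stratum where the curvature current of $\bar{h}$ is a genuine K\"ahler form, pass to a desingularization, bound $\int(c_1)_{ac}^{\dim Y}$ from below by the volume of that open piece, and invoke Popovici's criterion. The only differences are cosmetic: the paper selects the generic stratum via a maximal element of the frontier partial order on the $\pi(Q_W)$ rather than your density argument (for which you should replace the closed sets $\overline{Y\cap S_\alpha}$ by the analytic sets $Y\cap\overline{S_\alpha}$, since analytic irreducibility only forbids finite covers by proper closed \emph{analytic} subsets), and you are more explicit than the paper about why bigness descends through the resolution.
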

\begin{proof}
  By $(3)$ in Proposition~\ref{sec:symplectic-cuts-stratum-in-Mbar}, there is a
  natural partial order among $\pi(Q_W)$, where $Q$ ranges in the stratification
  of $\M$. We define $\pi(Q_W)\leq\pi(S_W)$ if $\pi(Q_W)\subset\bar{\pi(S_W)}$.
  If $Y$ is a closed irreducible complex subspace, $Y$ must intersect some
  $\pi(Q_W)$. We choose $\pi(Q_W)$ to be the largest one with respect to the
  partial order $\leq$ just mentioned. By $(3)$ in
  Proposition~\ref{sec:symplectic-cuts-stratum-in-Mbar} again, $\pi(Q_W)$ is
  open in $\bar{\M}\setminus\cup_{\pi(S_W)>\pi(Q_W)}\pi(S_W)$. Therefore,
  $\pi(Q_W)\cap Y$ is open in $Y$. By
  Proposition~\ref{sec:projectivity-curvature-h0}, $\pi(Q_W)$ admits a
  $\C^*$-orbit type stratification. Similarly, we may further choose a stratum
  $S$ in $\pi(Q_W)$ such that $Y\cap\pi(Q_W)\cap S=Y\cap S$ is open in
  $Y\cap\pi(Q_W)$. Therefore, $Y_{reg}\cap S$ is also open in $Y$, where
  $Y_{reg}$ is the smooth locus of $Y$.

  Now we consider the restriction of $\bar{\sL}$ to $Y$. We will use \cite[Theorem
  1.3]{Popovici2008} to show that $\bar{\sL}|_Y$ is big. By taking a
  desingularization of $Y$, we may assume that $Y$ is a compact complex
  manifold. Clearly, the Chern current $c_1(\bar{\sL},\bar{h})$ is still positive.
  Therefore, by Lebesgue's decomposition theorem, the absolutely continuous part
  $c_1(\bar{\sL},\bar{h})_{ac}$ of $c_1(\bar{\sL},\bar{h})$ is also positive. Hence,
  \begin{equation}\label{eq:1}
    \int_Yc_1(\bar{\sL},\bar{h})_{ac}^{\dim Y}\geq\int_{Y_{reg}\cap S}c_1(\bar{\sL},\bar{h})_{ac}^{\dim Y}>0
  \end{equation}
  To justify the last inequality, we note that $Y_{reg}\cap S$ is a complex
  submanifold of $S$ and hence K\"ahler. By
  Proposition~\ref{sec:projectivity-curvature-h0}, $c_1(\bar{\sL},\bar{h})$ is the
  K\"ahler form on $S$. Therefore, the restriction of $c_1(\bar{\sL},\bar{h})_{ac}^{\dim
    Y}$ to $Y_{reg}\cap S$ is precisely the volume form on $Y_{reg}\cap S$.
  Hence, the last inequality in Equation~\eqref{eq:1} holds.
\end{proof}
\begin{proof}[Proof of Theorem~\ref{sec:introduction-positivity}]
  Note that $(1)$ in Theorem~\ref{sec:introduction-positivity} is already proved
  in Proposition~\ref{sec:kahler-metric-moduli-line-bundle-descent}. To prove
  $(3)$, we use Grauert's criterion of ampleness for a line bundle over a
  compact complex space (see \cite{Grauert1962}). Therefore, we need to show
  that the restriction of $\bar{\sL}$ to any irreducible closed complex subspace $Y$
  with $\dim Y>0$ admits a nontrivial holomorphic section that vanishes
  somewhere on $Y$. Let $Y$ be an irreducible closed complex subspace of
  $\bar{\M}$ with $\dim Y>0$. By Lemma~\ref{sec:projectivity-bigness},
  $\bar{\sL}|_Y$ is big. Hence, it admits a nontrivial holomorphic section. Such a
  section must vanish somewhere on $Y$. Otherwise, $\bar{\sL}|_Y$ is holomorphically
  trivial and cannot be big. Therefore, $\bar{\sL}$ is ample, and $\bar{\M}$ is
  projective.

  To see that $\M$ is quasi-projective, let us recall that $\bar{\M}=\M\cup Z$,
  where $Z$ is a closed complex subspace of $\bar{\M}$. Moreover, let
  $i\colon\bar{\M}\to\mathbb{P}^N$ be a projective embedding. By Remmert's
  proper mapping theorem, $i(Z)$ is a closed complex subspace of $\mathbb{P}^N$.
  By Chow's theorem, both $i(\bar{\M})$ and $i(Z)$ are Zariski closed in
  $\mathbb{P}^N$ so that $i(\M)$ is Zariski open in $i(\bar{\M})$. By
  definition, $\M$ is quasi-projective.

  Finally, we show $(2)$. It suffices to show that the line bundle $\bar{\sL}\to
  W^*/\C^*$ is isomorphic to $\sL\to\M$ via the biholomorphism $W^*/\C^*\to\M$
  described in the proof of Theorem~\ref{sec:introduction-compactify}. By
  definition, the total space of the line bundle $\sL_\C\to\M\times\C$ is
  $\sL\times\C$. If we restrict $\sL_\C$ to $W^*=\M\times\C^*$, we obtain the
  following commutative diagram
  \begin{equation}
    \label{eq:2}
    \xymatrix
    {
      \sL\times\C^*\ar[r]\ar[d] &\sL\ar[d]\\
      \M\times\C^*\ar[r] &\M
    },
  \end{equation}
  where the top horizontal map is given by $(v,z)\mapsto z^{-1}\cdot v$.
  Therefore, the diagram~\eqref{eq:2} defines a map $(\sL_\C|_{W^*})/\C^*\to\sL$
  covering the biholomorphism $W^*/\C^*\to\M$. Finally, by the proof of
  Proposition~\ref{sec:desc-lemm-vect-descent-analytic-quotient}, it is easy to
  verify that the total space $\bar{\sL}$ of the line bundle $\bar{\sL}\to W^*/\C^*$ is
  precisely $(\sL_\C|_{W^*})/\C^*$. Therefore, we have obtained a bundle map
  $\bar{\sL}\to\sL$ that is an isomorphism on each fiber.
\end{proof}

\bibliography{..//..//references}
\bibliographystyle{abbrv}
\end{document}